\title{The graded algebra of Steenrod $q$th powers}
\author{Grant Walker}
\date{email: grant.walker@manchester.ac.uk\\ {\it \small 2010 Mathematics Subject Classification: 55S10}}
\newtheorem{theorem}{Theorem}[section]
\newtheorem{definition}[theorem]{Definition}
\newtheorem{example}[theorem]{Example}
\newtheorem{proposition}[theorem]{Proposition}
\newtheorem{remark}[theorem]{Remark}
\newcommand{\A}{\mathsf A}               
\newcommand{\bin}{{\rm bin}}        
\newcommand{\F}{{\mathbb F}_2}                    
\newcommand{\FF}{\mathbb F}        
\newcommand{\I}{\mathsf I}            
\newcommand{\len}{{\rm len}}           
\renewcommand{\P}{\mathsf P}             
\newcommand{\pin}{{\rm pin}}        
\newcommand{\U}{\mathsf U}           
\begin{document}
\maketitle

\begin{abstract}
The algebra $\A_q$ of Steenrod $q$th powers, where $q=p^e$ is a power of a prime $p$, is isomorphic to a subalgebra $\A'_q$ of the algebra of Steenrod $p$th powers $\A_p$.   The filtration of $\A_p$ by powers of its augmentation ideal was studied by J.~P.~May in his Princeton thesis of 1964.   We extend some of May's results to $\A_q$ and obtain a convenient set of defining relations for the graded algebra $E^0(\A_q)$.   In the case $q=p$, we recover the observation of S.~B.~Priddy that the subalgebra $E^0(\A_p(n-2))$ of $E^0(\A_p)$ generated by the elements $P^{p^j}$ for $0 \le j \le n-2$ is isomorphic to the graded algebra associated to the augmentation ideal filtration of the group algebra $\FF_p\U(n)$, where $\U(n)$ is the group of upper unitriangular matrices over $\FF_p$.

The Arnon A basis of $\A_p$ is given by monomials which are minimal in the left lexicographic order on formal monomials in the Steenrod powers.   K.~G.~Monks (for $p=2$) and  D.~Yu.~Emelyanov and Th.~Yu.~Popelensky (for $p >2$) have found a triangular relation between this basis and the Milnor basis using a certain ordering on the Milnor basis.   We introduce a variant of the Arnon A basis which is minimal for the right order, and show that this basis and Arnon's original A basis are also triangularly related to the Milnor basis of $\A_q$ using the right order on the Arnon A basis.
\end{abstract}

\section{Introduction} \label{sec_intro}

Given a prime $p$, we denote by $\A_p$ the algebra of Steenrod $p$th powers.   As an algebra over the field of $p$ elements $\FF_p$, $\A_p$ may be regarded as the subalgebra of the mod $p$ Steenrod algebra ${\cal A}_p$ which is generated by the elements $P^r$, $r \ge 0$, subject to the relation $P^0 =1$ and the Adem relations
$$
P^aP^b = \sum_{j=0}^{[a/p]} (-1)^{a+j} \binom{(p-1)(b-j)-1}{a-pj} P^{a+b-j}P^j, \ a<pb,
$$
where the binomial coefficients are taken mod $p$, or alternatively as the quotient algebra ${\cal A}_p/ {\cal A}_p\beta {\cal A}_p$, where $\beta$ is the Bockstein.   As a subalgebra of ${\cal A}_p$, the element $P^r$ is given the degree $2r(p-1)$, but for simplicity we regrade $\A_p$ by giving $P^r$ the `reduced' degree $r$.   Thus when $p=2$, $P^r$ will mean $Sq^r$, and not $Sq^{2r}$.

For a prime power $q =p^e$, where $e \ge 1$, the algebra $\A_q$ of Steenrod $q$th powers \cite[Chapter 11]{Smith} can be defined as an algebra over the Galois field $\FF_q$ by generators $P^r$, $r \ge 0$, subject to the relation $P^0 =1$ and the Adem relations
\begin{equation} \label{Adem_q}
P^aP^b = \sum_{j=0}^{[a/q]} (-1)^{a+j} \binom{(q-1)(b-j)-1}{a-qj} P^{a+b-j}P^j, \ a<qb,
\end{equation}
where, as before, the binomial coefficients are taken mod $p$.   As the coefficients lie in the prime subfield $\FF_p$, we have an algebra $\A'_q$ defined over $\FF_p$ by the same generators and relations, and an isomorphism $\rho: \A_q \rightarrow \A'_q \otimes_{\FF_p} \FF_q$ of Hopf algebras.

We introduce the algebras $\A_q$ and $\A'_q$ in Section \ref{sec_A'q}.   The algebra $\A'_q$ may be identified with the subalgebra of $\A_p$ with Milnor basis given by the elements $P(R) = P(r_1, r_2, \ldots)$ such that $r_j=0$ when $j \neq 0$ mod $e$ \cite[Section 12.3]{book}.   As for $\A_p$, we grade $\A_q$ by assigning degree $r$ to  $P^r$.   With this choice of gradings, the element $P^r \in \A_q$ corresponds to $P(0, \ldots, 0, r) \in \A'_q$, with $r$ in position $e$, and the map $\rho$ multiplies the grading by $(q-1)/(p-1)$.   It can be verified as in \cite[Proposition 3.2.1 or 12.3.3]{book} that the relations (\ref{Adem_q}) are satisfied in $\A'_q$.   As in \cite[Chapter 3]{book}, it follows from the Adem relations and the action of $\A'_q$ on the polynomial algebra $\FF_p[x]$ that the elements $P(0, \ldots, 0, p^s)$ for $s \ge 0$ are indecomposable in $\A'_q$, and form a minimal set of generators.

In Section \ref{sec_May} we discuss the filtration of $\A_p$ by powers of the augmentation ideal $\A_p^+$.   This was studied by J.~P.~May in his Princeton thesis of 1964 \cite{May}, and is known as the {\bf May filtration}.  Thus an element $\theta \in \A_p$ has May filtration $M(\theta) = m$ if $\theta \in  (\A_p^+)^m$  but  $\theta \not \in  (\A_p^+)^{m+1}$.   Since the ideal $\A_p^+$ is generated by the indecomposable elements $P^{p^j}$, $j \ge 0$, the elements of May filtration $1$ in degree $p^j$ are the elements whose expansion in any basis containing $P^{p^j}$ contains $P^{p^j}$ as a term.

\begin{example} \label{exa_22} {\rm
Since $Sq^2Sq^2 = Sq^3Sq^1 = Sq^1Sq^2Sq^1$, $M(Sq^2Sq^2) =3$.   Similarly $M(Sq^2Sq^1Sq^2) =3$ and $M(Sq^2Sq^1Sq^2Sq^1) = 4$.
}
\end{example}

\begin{definition} \label{def_alpha} {\rm
For an integer $a \ge 0$, let $a = \sum_{i=0}^r a_i p^i$ be the base $p$ expansion of $a$, where $0 \le a_i \le p-1$ for all $i$.  We define $\alpha(a) = \sum_{i=0}^r a_i$.
}
\end{definition}

The May filtration has some elementary properties.
\begin{proposition} \label{prop_elem_May}
{\rm (i)} If $\theta \in \A_p^d$, then $M(\theta) \ge \alpha(d)$.

{\rm (ii)} For all $\theta_1, \theta_2 \in \A_p$, $M(\theta_1 \theta_2) \ge M(\theta_1) + M(\theta_2)$.

{\rm (iii)} For all $\theta \in \A_p$, $M(\chi(\theta)) = M(\theta)$, where $\chi$ is the antipode of $\A_p$.
\end{proposition}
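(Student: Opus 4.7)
The plan is to address the three parts in order of increasing subtlety, all of them reducing to manipulations of the ideal filtration $(\A_p^+)^m$.

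Part (ii) should be immediate from the definition of ideal powers: the product ideal satisfies $(\A_p^+)^{m_1} \cdot (\A_p^+)^{m_2} \subseteq (\A_p^+)^{m_1+m_2}$. So if $\theta_1 \in (\A_p^+)^{M(\theta_1)}$ and $\theta_2 \in (\A_p^+)^{M(\theta_2)}$, then $\theta_1\theta_2$ lies in $(\A_p^+)^{M(\theta_1)+M(\theta_2)}$, giving the desired inequality.

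For Part (i), I would use that $\A_p$ is generated as an algebra by the indecomposables $P^{p^j}$, noted in the excerpt for $\A'_q$ and hence for $\A_p$. Thus every $\theta \in \A_p^+$ of degree $d$ is a linear combination of monomials $P^{p^{j_1}} \cdots P^{p^{j_k}}$ with $\sum p^{j_i} = d$, and each such monomial is a product of $k$ elements of $\A_p^+$, so it lies in $(\A_p^+)^k$. The proof reduces to the combinatorial claim that in any expression $d = \sum_{i=1}^k p^{j_i}$ one has $k \ge \alpha(d)$. I would establish this by a `carrying' argument: if a representation $d = \sum b_i p^i$ has some $b_i \ge p$, replacing $p$ copies of $p^i$ by one copy of $p^{i+1}$ strictly decreases $\sum b_i$ by $p-1 \ge 1$. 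Iterating reduces any representation to the unique base-$p$ expansion, whose digit sum is $\alpha(d)$; hence $\alpha(d)$ is the minimum. Each summand of $\theta$ therefore lies in $(\A_p^+)^{\alpha(d)}$, and so does $\theta$.

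Part (iii) should follow from the general nonsense that the antipode $\chi$ is a grading-preserving anti-automorphism of the augmented algebra $\A_p$, satisfying $\epsilon \circ \chi = \epsilon$, so $\chi(\A_p^+) \subseteq \A_p^+$. Combined with its anti-multiplicativity this gives $\chi((\A_p^+)^m) \subseteq (\A_p^+)^m$, so $M(\chi(\theta)) \ge M(\theta)$; the reverse inequality follows by applying this to $\chi(\theta)$ and invoking $\chi^2 = \mathrm{id}$.

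I expect the only step requiring any genuine thought is the combinatorial lemma in (i); (ii) and (iii) are essentially formal consequences of the augmented Hopf algebra structure and the standard definition of the ideal-power filtration.
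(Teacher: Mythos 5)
Your proposal is correct and follows essentially the same route as the paper, which disposes of (ii) via $(\A_p^+)^r(\A_p^+)^s\subseteq(\A_p^+)^{r+s}$, of (i) by noting that an element of degree $d$ cannot be a product of fewer than $\alpha(d)$ generators $P^{p^j}$, and of (iii) by $\chi(\A_p^+)=\A_p^+$. You merely make explicit the carrying argument behind (i) and the $\chi^2=\mathrm{id}$ step behind (iii), both of which the paper leaves implicit.
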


\begin{proof}
For (i), an element $\theta \in \A_p^d$ cannot be the product of $< \alpha(d)$ elements of the form $P^{p^j}$.   For (ii), we observe that $(\A_p^+)^r (\A_p^+)^s \subseteq (\A_p^+)^{r+s}$.  For (iii), we observe that $\chi(\A_p^+) = \A_p^+$.
\end{proof}

May determined the filtration on $\A_p$ by evaluating it on the Milnor basis, and showing that for a general element $\theta \in \A_p$, $M(\theta)$  is the minimum of the filtrations of the terms in the expansion of $\theta$ in the Milnor basis.

\begin{theorem} \label{prop_May} {\rm (May)}
For any sequence $R = (r_1, r_2, \ldots, r_\ell)$ of integers $\ge 0$, $M(P(R)) = \sum_{i=1}^\ell i\, \alpha(r_i)$.
\end{theorem}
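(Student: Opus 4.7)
The plan is to prove both $M(P(R))\le f(R)$ and $M(P(R))\ge f(R)$, where $f(R):=\sum_i i\,\alpha(r_i)$, with the upper bound carrying the main technical content.

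For the upper bound, introduce the decreasing vector-space filtration
\[
F^m\A_p := \operatorname{span}_{\FF_p}\{P(T) : f(T)\ge m\},
\]
for which $P(R)\in F^{f(R)}\setminus F^{f(R)+1}$ holds by construction, since the Milnor basis is $\FF_p$-linearly independent. The central claim is that $F^\bullet$ is an algebra filtration: $F^m\cdot F^n\subseteq F^{m+n}$. Granting this, each generator $P^{p^s}=P(p^s,0,\ldots)$ lies in $F^1$ because $f(p^s,0,\ldots)=1$, so $(\A_p^+)^k\subseteq F^k$ for all $k$; combined with $P(R)\notin F^{f(R)+1}$ this yields $M(P(R))\le f(R)$.

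By bilinearity the multiplicativity reduces to showing that every $P(T)$ appearing with nonzero coefficient in the Milnor-basis expansion of $P(R)\cdot P(S)$ satisfies $f(T)\ge f(R)+f(S)$. Milnor's multiplication formula writes this product as $\sum_X b(X)\,P(T(X))$, indexed by non-negative integer matrices $X=(x_{i,j})$ (with $x_{0,0}$ omitted) constrained by the row conditions $r_i=\sum_{j\ge 0}p^j x_{i,j}$ and column conditions $s_j=\sum_{i\ge 0}x_{i,j}$, with output $T(X)_k=\sum_{i+j=k}x_{i,j}$ and coefficient $b(X)$ a product of multinomial coefficients along the anti-diagonals $i+j=k$. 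By Kummer's theorem, the surviving matrices (those with $b(X)\not\equiv 0\pmod p$) are exactly those for which no base-$p$ carries occur in any of the anti-diagonal sums, so $\alpha(T(X)_k)=\sum_{i+j=k}\alpha(x_{i,j})$. Combined with the elementary identity $\alpha(p^j a)=\alpha(a)$ and the subadditivity $\alpha(a+b)\le\alpha(a)+\alpha(b)$ applied to the row- and column-sum relations, this gives
\[
f(T(X)) \;=\; \sum_{i,j}(i+j)\,\alpha(x_{i,j}) \;\ge\; \sum_i i\,\alpha(r_i) + \sum_j j\,\alpha(s_j) \;=\; f(R)+f(S).
\]

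For the lower bound $M(P(R))\ge f(R)$, expand $r_i=\sum_k a_{i,k}p^k$ in base $p$, so $f(R)=\sum_{i,k} i\,a_{i,k}$. Milnor multiplication realizes $P(R)$, modulo $F^{f(R)+1}$, as a nonzero scalar multiple of a product of exactly $f(R)$ generators $P^{p^t}$ (with $a_{i,k}$ factors targeted at position $i$ and digit $k$); the multiplicativity of $F^\bullet$ already established ensures all correction terms in the Milnor expansion of this product lie in $F^{f(R)+1}$. A joint induction, outer on total degree and inner descending on $f$ within a fixed degree, gives $F^{f(R)+1}\subseteq (\A_p^+)^{f(R)+1}$ in the relevant range and so upgrades the congruence to an honest expression of $P(R)$ as a linear combination of products of $f(R)$ elements of $\A_p^+$, placing $P(R)\in(\A_p^+)^{f(R)}$. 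The main obstacle throughout is the multiplicativity estimate in the third paragraph; despite its delicate appearance, it reduces cleanly to Kummer's theorem and the subadditivity of the digit-sum function $\alpha$, once the anti-diagonal indexing of $T(X)$ in Milnor's formula is lined up with the $i$- and $j$-weightings in $f(R)$ and $f(S)$.
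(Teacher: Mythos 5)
Your upper bound $M(P(R))\le f(R)$, where $f(R)=\sum_i i\,\alpha(r_i)$, is correct and complete: the multiplicativity $F^m\cdot F^n\subseteq F^{m+n}$ does follow from Kummer's theorem applied to the anti-diagonal multinomial coefficients, together with $\alpha(p^ja)=\alpha(a)$ and the subadditivity of $\alpha$ applied to the row and column constraints; this gives $(\A_p^+)^k\subseteq F^k$ and hence $P(R)\notin(\A_p^+)^{f(R)+1}$. This half is a genuinely different route from the paper, whose Proposition \ref{May_filt_Mil_2} obtains the same inequality from the action on polynomials, by counting how many positive-degree operations are needed to move the base-$p$ digits of the exponents of $y_1^{r_1}\cdots y_\ell^{r_\ell}$ to those of $y_1^{qr_1}\cdots y_\ell^{q^\ell r_\ell}$. (Your weighted subadditivity estimate is essentially the computation the paper performs inside the proof of Proposition \ref{May_filt_Mil_1}, deployed there for the other inequality.)

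The lower bound $M(P(R))\ge f(R)$ has a genuine gap. You claim that a product of $f(R)$ generators equals a nonzero multiple of $P(R)$ modulo $F^{f(R)+1}$, the correction terms being placed in $F^{f(R)+1}$ ``by multiplicativity''. But multiplicativity only places them in $F^{f(R)}$, and they really can sit at level exactly $f(R)$: for $p=2$ and $R=(0,1)$ one has $Sq^2Sq^1=Sq(0,1)+Sq(3)$ with $\alpha(3)=2$, so $Sq(3)$ and $Sq(0,1)$ both have $f=2$, and no product of two generators in degree $3$ is congruent to $Sq(0,1)$ modulo $F^3$ (the only candidates are $Sq^2Sq^1$ and $Sq^1Sq^2=Sq(3)$, and $F^3$ vanishes in degree $3$). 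Worse, since every Milnor basis element in degree $3$ has $f=2$, your inner descending induction on $f$ within a fixed degree has an empty inductive hypothesis exactly where it is needed: it cannot absorb correction terms with $f(T)=f(R)$. The paper's Proposition \ref{May_filt_Mil_1} repairs precisely this point by inducting on $|R|=\sum_i r_i$ instead: $P(R)$ is realized as the leading term of a product of two Milnor basis elements (splitting off $r_1$, or splitting a single entry $r$ as $a+b$ with $\pin(r)=\pin(a)\sqcup\pin(b)$), and every correction term $P(S)$ from the Milnor product formula satisfies $|S|<|R|$, so the inductive hypothesis applies to $P(S)$ even when $f(S)=f(R)$. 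Your argument goes through once the induction on degree and $f$ is replaced by an induction on a quantity, such as $|R|$, that strictly decreases on the correction terms.
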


We give a proof of Theorem \ref{prop_May} in Section \ref{sec_May}, and extend the result to the prime power case.

Given a filtered algebra $A$, we denote the associated graded algebra by $E^0(A)$.   The following observation of S.~B.~Priddy relates the Steenrod algebra to the group algebra of the group $\U(n)$ of $n \times n$ upper unitriangular matrices over $\FF_p$.    A proof of this result, based on \cite{May}, is given in \cite{Mitchell}.   The subalgebra $\A_p(n-2)$ of $\A_p$ is generated by the elements $P^{p^i}$, $0 \le i \le n-2$, and has dimension $p^{n(n-1)/2}$ as a vector space over $\FF_p$.

\begin{theorem} {\rm (Priddy)} \label{th_Priddy}
 The algebras $E^0(\FF_p \U(n))$ and $E^0(\A_p(n-2))$ are isomorphic as graded Hopf algebras.
\end{theorem}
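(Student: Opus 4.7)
My plan is to exhibit both sides as the same graded Hopf algebra by giving compatible presentations, using May's Theorem \ref{prop_May} on the Steenrod side and the Jennings--Quillen theorem on the group algebra side. First I would describe $E^0(\A_p(n-2))$ explicitly. A standard computation identifies the Milnor basis of $\A_p(n-2)$ with those $P(R) = P(r_1, \ldots, r_{n-1})$ satisfying $0 \le r_i < p^{n-i}$, giving total dimension $p^{n(n-1)/2}$, and by Theorem \ref{prop_May} each such basis vector has May filtration $M(P(R)) = \sum_i i\, \alpha(r_i)$. Thus the Milnor basis descends to a homogeneous basis of $E^0(\A_p(n-2))$. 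In particular the filtration-$1$ piece is spanned by the $n-1$ classes $\bar{\xi}_i := [P^{p^{i-1}}]$ for $i = 1, \ldots, n-1$, and these generate $E^0(\A_p(n-2))$ as an algebra because the corresponding indecomposable elements $P^{p^{i-1}}$ generate $\A_p(n-2)$ itself.

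Next I would recall that for a finite $p$-group $G$, the associated graded of $\FF_p G$ for its augmentation-ideal filtration is the restricted universal enveloping algebra of the associated graded Lie algebra of $G$ under the Jennings filtration. Applied to $G = \U(n)$, this yields $E^0(\FF_p\U(n)) \cong u(\mathfrak{n}_n)$, where $\mathfrak{n}_n$ is the Lie algebra of strictly upper-triangular $n\times n$ matrices over $\FF_p$. This algebra is presented by $n-1$ primitive generators $\bar{x}_i := [e_i-1]$, where $e_i := I + E_{i,i+1}$, subject to $\bar{x}_i^p = 0$ together with the iterated bracket relations of $\mathfrak{n}_n$, and has dimension $|\U(n)| = p^{n(n-1)/2}$. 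With this description in hand, I would define a graded-algebra map $\Phi: E^0(\FF_p\U(n)) \to E^0(\A_p(n-2))$ by $\bar{x}_i \mapsto \bar{\xi}_i$. To show $\Phi$ is well-defined, one must check that each defining relation of $u(\mathfrak{n}_n)$ holds in $E^0(\A_p(n-2))$; May's filtration formula plus the Milnor product formula reduces each such check to an inequality on $\alpha$-sums. Primitivity of $\bar{\xi}_i$ in $E^0$ is automatic because the cross-terms in the Cartan coproduct $\Delta P^{p^j} = \sum_{a+b = p^j} P^a \otimes P^b$ have $\alpha(a) + \alpha(b) \ge 2$ whenever both $a,b>0$, hence vanish in $E^0\otimes E^0$, so $\Phi$ is a coalgebra map. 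Surjectivity is clear, as $\Phi$ sends a generating set to a generating set, and equality of dimensions then forces $\Phi$ to be an isomorphism of graded Hopf algebras.

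The hardest step is verifying the full bracket structure of $\mathfrak{n}_n$ inside $E^0(\A_p(n-2))$. Commutators $[\bar{\xi}_i, \bar{\xi}_j]$ for $|i-j| \ge 2$ vanish by direct filtration-bound arguments, and $\bar{\xi}_i^p = 0$ follows once one checks that $(P^{p^{i-1}})^p$ has Milnor expansion supported on $P(R)$ with $M(P(R)) \ge p+1$. But the inductive identification of the higher ``root classes'' $[\bar{\xi}_j, [\bar{\xi}_{j-1},\ldots,[\bar{\xi}_{i+1}, \bar{\xi}_i]\ldots]]$ with the Milnor monomials $[P(0,\ldots,0,p^{i-1},0,\ldots)]$ of filtration exactly equal to the length of the commutator requires a careful Adem-relation computation, and this is the substantive content of the argument. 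A streamlined route, which I would take, is to exploit the dimension count to bypass any completeness claim for the relations: one only has to verify enough relations to make $\Phi$ well-defined, after which surjectivity together with equality of dimensions yields the isomorphism without further work.
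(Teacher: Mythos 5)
Your overall architecture --- Quillen/Jennings on the group side, May's filtration theorem on the Steenrod side, a map defined on generators, then surjectivity plus a dimension count --- is the same as the paper's. But your final ``streamlined route'' contains a genuine logical gap. To get a well-defined algebra map out of $E^0(\FF_p\U(n))\cong u(\mathfrak{n}_n)$ presented on the $n-1$ superdiagonal generators $\bar x_1,\dots,\bar x_{n-1}$, you must verify \emph{every} relation in a \emph{complete} presentation on those generators, and such a presentation necessarily includes the $p$th-power relations on the higher root vectors, i.e.\ $\bigl([\bar x_j,[\bar x_{j-1},\dots,[\bar x_{i+1},\bar x_i]\dots]]\bigr)^p=0$. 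These are not consequences of $\bar x_i^p=0$ and the Serre-type bracket relations: already for $n=3$, $p=2$, the algebra $\langle x_1,x_2\mid x_1^2=x_2^2=0\rangle$ is infinite-dimensional (the centrality of $z=[x_1,x_2]$ is automatic but $z^2=x_1x_2x_1x_2+x_2x_1x_2x_1$ does not vanish), so $z^2=0$ must be imposed and therefore must be checked in $E^0(\A_p(1))$. Verifying these relations requires identifying the iterated commutators of the $\bar\xi_i$ with the classes of the elements $P^s_t$ and computing their $p$th powers --- precisely the ``substantive content'' you propose to skip. The dimension count only upgrades an already well-defined surjection to an isomorphism; it cannot substitute for the well-definedness check.

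The paper sidesteps this by presenting \emph{both} algebras on the full set of $n(n-1)/2$ generators, one for each $p$-atomic number $a$: on the group side Quillen's theorem plus PBW gives the complete power--commutator presentation of $U(L(\U(n)))$ on the classes $f_a=1-e_a$ of \emph{all} elementary matrices $E_{i,j}$ (not just the superdiagonal ones), and on the Steenrod side Proposition \ref{prop_power_comm} establishes the matching relations $P[a]^p=0$ and $[P[a],P[b]]=P[a+b]$ or $0$ directly from the Milnor product formula combined with Proposition \ref{May_filt_Mil_1} --- no Adem-relation manipulations and no inductive identification of root classes are needed. If you want to salvage your version, either enlarge your generating set to all the $P^s_t$ and prove the analogue of Proposition \ref{prop_power_comm}, or carry out in full the root-class computation you deferred; either way the work you labelled as avoidable is exactly the core of the proof. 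Your remaining steps (the profile description $0\le r_i<p^{n-i}$ of the Milnor basis of $\A_p(n-2)$, generation of $E^0$ by the filtration-one classes, primitivity of the generators, and the dimension count) are correct and agree with the paper.
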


In Section \ref{sec_graded alg} we give a convenient set of generators and relations for $E^0(\A_q)$, and in Section \ref{sec_Quillen} we use a theorem of Quillen \cite{Quillen} to prove Theorem \ref{th_Priddy}.   In Sections \ref{sec_Pst}, \ref{sec_Ar_A_p} and \ref{sec_Ar_A_q} we extend the work of K.~G.~Monks (for $p=2$) and of D.~Yu.~Emelyanov and Th.~Yu.~Popelensky (for $p >2$) on the relation between various bases of $\A_p$ \cite{Emelyanov-Popelensky17, Monks}.   In Section  \ref{sec_Pst} we show that some of the bases known as $P^s_t$-bases coincide with the Milnor basis up to higher May filtration, and hence give the same basis of $E^0(\A_q)$.  In Sections \ref{sec_Ar_A_p} and \ref{sec_Ar_A_q}, we introduce a variant of Arnon's A basis, and show that this basis and the original A basis are triangular with respect to the Milnor basis of $\A_q$, using the right lexicographic order on the A basis.   This is a different ordering from the one used in \cite{Emelyanov-Popelensky17, Monks}.

\section{The algebras $\A_q$ and $\A'_q$} \label{sec_A'q}

Let $p$ be a prime number and let $q = p^e$, $e \ge 1$, be a power of $p$.   The Milnor basis elements $P^r_e = P(0, \ldots, 0, r)$, with $r \ge 1$ in position $e$, generate a Hopf subalgebra $\A'_q$ of $\A_p$.   This subalgebra has an additive basis given by Milnor basis elements $P(R)$, where $R =(r_1, r_2, \ldots)$ is a finite sequence of integers $\ge 0$ such that $r_j =0$ if $j$ is not a multiple of $e$.   This is clear from Milnor's product formula, since a Milnor matrix $X = (x_{i,j})$ which arises when two such elements are multiplied must have entries $x_{i,j}=0$ unless $i$ and $j$ are both divisible by $e$, so that $i+j$ is divisible by $e$.   Hence we make the following definition.

\begin{definition} \label{def_P_e(R)} {\rm
For $e \ge 1$ and $R = (r_1, \ldots, r_\ell)$, let $R_e$ be the sequence whose $j$th term is $r_k$ if $j = ke$ for $k \ge 1$, and is $0$ otherwise, and let $P_e(R) = P(R_e)$ be the corresponding Milnor basis element of $\A_p$.   The algebra $\A'_q$ is the subalgebra of $\A_p$ with $\FF_p$-basis elements $P_e(R)$ for all finite sequences $R$ of integers $\ge 0$.
}
\end{definition}

\begin{proposition} \label{PseriesofA'q}
The Poincar\'e series $\Pi(\A'_q, \tau) =\prod_{j \ge 1} 1/(1-\tau^{(q^j-1)/(p-1)})$.
\end{proposition}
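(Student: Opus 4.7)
The plan is to read off the Poincaré series directly from the additive basis of $\A'_q$ described in Definition \ref{def_P_e(R)}, once we know the degrees of the basis elements in the reduced grading. Since $\A'_q$ has an $\FF_p$-basis consisting of the elements $P_e(R)$ indexed by finite sequences $R=(r_1,\ldots,r_\ell)$ of non-negative integers, and since distinct sequences give linearly independent basis elements, it suffices to sum $\tau^{\deg P_e(R)}$ over all finite sequences $R$ of non-negative integers and recognise the result as an infinite product.

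First I would compute $\deg P_e(R)$ in the reduced grading. In $\A_p$, a Milnor basis element $P(s_1,s_2,\ldots)$ has standard degree $2(p-1)\sum_i s_i(p^i-1)/(p-1)$, so in the reduced grading (where $P^r$ has degree $r$) it has degree $\sum_i s_i(p^i-1)/(p-1)$. Applying this to $P_e(R)=P(R_e)$, whose only possibly non-zero entries are $r_k$ in position $ke$, yields
$$
\deg P_e(R) \;=\; \sum_{k\ge 1} r_k \,\frac{p^{ke}-1}{p-1} \;=\; \sum_{k\ge 1} r_k\,\frac{q^k-1}{p-1}.
$$

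Next I would assemble the Poincaré series. Summing over all finite sequences $R$ of non-negative integers and interchanging the sum with the (finitely supported) product gives
$$
\Pi(\A'_q,\tau) \;=\; \sum_{R} \tau^{\deg P_e(R)} \;=\; \prod_{k\ge 1}\sum_{r_k\ge 0}\tau^{r_k(q^k-1)/(p-1)} \;=\; \prod_{k\ge 1}\frac{1}{1-\tau^{(q^k-1)/(p-1)}},
$$
which is the desired formula.

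There is no serious obstacle here; the only point requiring care is the regrading convention. One must remember that the grading in the statement is the reduced one (in which $P^r$ has degree $r$), so the factor $2(p-1)$ coming from the standard grading of the Milnor basis must be cancelled before expanding the product. Once this is done, the product formula is a routine generating-function identity.
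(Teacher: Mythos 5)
Your proposal is correct and follows essentially the same route as the paper: compute the reduced degree $\deg P_e(R) = \sum_{k\ge 1} r_k(q^k-1)/(p-1)$ and then recognise the generating function over all finite sequences $R$ as the product of geometric series $\prod_{k\ge 1} 1/(1-\tau^{(q^k-1)/(p-1)})$. The paper phrases the final step as counting solutions of $(p-1)d = \sum_k (q^k-1)r_k$ rather than factoring the sum directly, but the two formulations are the same routine generating-function identity.
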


\begin{proof}
Since we use the reduced grading on $\A_p$ in which $P^r$ has degree $r$, $P(R)$ has degree $\sum_{j \ge 1} r_j(p^j-1)/(p-1)$, and $P_e(R)$ has degree  $\sum_{j \ge 1} r_j(q^j-1)/(p-1)$.
Thus $\dim (\A'_q)^d$ is the number of solutions $R = (r_1, r_2, \ldots)$ of the equation
\begin{equation} \label{degMilnor}
(p-1)d= (q-1)r_1 + (q^2-1)r_2 + \cdots + (q^j-1)r_j + \cdots,
\end{equation}
where $r_j \ge 0$ for $j \ge 1$.   A solution of (\ref{degMilnor}) gives an expression for $(p-1)d$ as the sum of $|R| = \sum_j r_j$ terms, of which $r_j$ are equal to $q^j-1$ for $j \ge 1$.   Since
 $1/(1 - \tau^{(q^j-1)/(p-1)}) = \sum_{i \ge 0} \tau^{i(q^j-1)/(p-1)}$, this corresponds to a term of degree $d$ in the power series expansion of $\prod_{j \ge 1} 1/(1 - \tau^{(q^j-1)/(p-1)})$.
\end{proof}

We show that $\A'_q$ satisfies the Adem relations (\ref{Adem_q}).

\begin{proposition} \label{prop_Ademq}
Let $q =p^e$ where $e \ge 1$, and for $r \ge 0$, let $P_e^r = P(0, \ldots, 0, r)$ in $\A_p$, where $r$ is in position $e$.   Then for $a < qb$
$$
P_e^a P_e^b = \sum_{j=0}^{[a/q]} (-1)^{a+j}\binom{(q-1)(b-j)-1}{a-qj} P_e^{a+b-j} P_e^j
$$
in $\A_p$, where the binomial coefficient is taken mod $p$.
\end{proposition}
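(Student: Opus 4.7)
My plan is to prove the identity as a computation inside $\A_p$ itself, using Milnor's product formula twice (once on each side) and then comparing Milnor-basis coefficients.

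First I would expand the left hand side $P_e^a P_e^b = P(R_e)\cdot P(S_e)$ via Milnor's formula, where $R_e$ has $a$ in position $e$ and $S_e$ has $b$ in position $e$. The row-weighted and column-sum constraints $\sum_j p^j x_{ij}=s_i$ and $\sum_i x_{ij}=r_j$ force every entry of the Milnor matrix $X=(x_{ij})$ to vanish outside the three positions $(0,e)$, $(e,0)$, $(e,e)$. So $X$ is parametrised by a single non-negative integer $m=x_{e,e}$, with $x_{0,e}=a-m$ and $x_{e,0}=b-qm$, and range $0\le m\le \min(a,[b/q])$. The Milnor multinomial coefficient collapses to $\binom{a+b-(q+1)m}{a-m}$, and the degree sequence $T(X)$ has only two non-zero entries: $a+b-(q+1)m$ in position $e$ and $m$ in position $2e$. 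Thus $P_e^a P_e^b$ equals a sum of such basis elements with these coefficients.

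Second, I would apply the same formula to each term $P_e^{a+b-k}P_e^k$ on the right hand side. The same analysis (with $A=a+b-k$, $B=k$) gives Milnor basis elements of the same shape, parametrised by an inner index $l=x_{e,e}$ with $0\le l\le[k/q]$ and $l\le a+b-k$. Matching coefficients of the basis element of shape $(a+b-(q+1)m$ in position $e$, $m$ in position $2e)$ forces $l=m$, which imposes $k\ge qm$, and uses $\binom{a+b-(q+1)m}{a+b-k-m}=\binom{a+b-(q+1)m}{k-qm}$. The identity of Proposition~\ref{prop_Ademq} thus reduces to the mod-$p$ binomial identity
\begin{equation*}
\binom{a+b-(q+1)m}{a-m}=\sum_{k=qm}^{[a/q]}(-1)^{a+k}\binom{(q-1)(b-k)-1}{a-qk}\binom{a+b-(q+1)m}{k-qm}.
\end{equation*}

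The main obstacle is the verification of this binomial identity. After the substitution $k=qm+h$ with $h\ge 0$, it becomes a sum over $h$ in which both the factor $(q-1)(b-qm-h)-1$ and the factor $a-q^2m-qh$ depend on $h$ through $q$-shaped expressions; the identity is structurally the ``Adem binomial identity'' with $p$ replaced by $q$. I would verify it either by expanding $(1-T)^{-N}$ using the negative-binomial identity and extracting coefficients with Lucas's theorem on base-$p$ digits, or more conceptually via the total Steenrod power point of view: the action of $P_e^r$ on a degree-one element of $\FF_p[x]$ is encoded by the formal generating function $P(t)\cdot x=x+tx^q$, and applying $(x+sx^q)^q=x^q+s^q x^{q^2}$ in characteristic $p$ yields a universal Bullett–Macdonald-type symmetry whose coefficient extraction is precisely the required identity. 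This is the content of the cited \cite[Proposition 3.2.1 or 12.3.3]{book}.
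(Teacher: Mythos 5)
Your overall strategy is exactly the paper's: expand both sides with Milnor's product formula, equate coefficients of the Milnor basis elements $P_e(a+b-(q+1)k,k)$, and reduce the proposition to a mod $p$ binomial identity. However, you have transposed Milnor's formula. In the product $P(R)P(S)$ it is the \emph{first} factor $R$ that enters through the weighted row sums $\sum_j p^j x_{ij}=r_i$ and the \emph{second} factor $S$ through the plain column sums $\sum_i x_{ij}=s_j$; you state the opposite. The formula is not symmetric (the algebra is not commutative), and the quickest sanity check is $Sq^1Sq^2=Sq^3$ versus $Sq^2Sq^1=Sq^3+Sq(0,1)$: your convention would produce the extra term $Sq(0,1)$ in $Sq^1Sq^2$ and omit it from $Sq^2Sq^1$. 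Concretely, the correct expansion is $P_e^aP_e^b=\sum_k\binom{a+b-(q+1)k}{b-k}P_e(a+b-(q+1)k,k)$ with $0\le k\le\min(b,[a/q])$, not $\binom{a+b-(q+1)m}{a-m}$ with $0\le m\le\min(a,[b/q])$. The error propagates: the matching condition in the right-hand expansion should be $k\ge m$ (from $x_{0,e}=k-m\ge0$ in the expansion of $P_e^{a+b-k}P_e^k$), not $k\ge qm$, and the displayed binomial identity you arrive at is actually false --- for $p=q=2$, $a=b=2$, $m=1$ its left side is $\binom{1}{1}=1$ while its right side is an empty sum. So, as written, your final step would be an attempt to prove a false statement.

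After correcting the convention you land on the paper's identity $\binom{a+b-(q+1)k}{b-k}=\sum_{j=k}^{[a/q]}(-1)^{a+j}\binom{(q-1)(b-j)-1}{a-qj}\binom{a+b-(q+1)k}{j-k}$, which the paper then normalises to $k=0$ by the substitution $a'=a-qk$, $b'=b-k$, $j'=j-k$ and proves by a self-contained induction on $a+b$ using the mod $p$ Pascal/Frobenius identity $\binom{c-1}{d}+\binom{c-1}{d-1}=\binom{c}{d}=\binom{c-q}{d}+\binom{c-q}{d-q}$. Your plan for this last step (negative binomial plus Lucas, or a Bullett--Macdonald-type symmetry, or citing the textbook) is only a sketch; whichever route you choose, it must be carried out for the corrected identity.
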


\begin{proof}
By the Milnor product formula
$$
P_e^a P_e^b = \sum_{k=0}^b \binom{a+b-(q+1)k}{b-k} P_e(a+b-(q+1)k,k).
$$
Using this to expand both sides of the required relation in the Milnor basis, and equating coefficients of $P_e(a+b-(q+1)k,k)$, we see that the relation is equivalent to the identity
$$
\binom{a+b-(q+1)k}{b-k}  = \sum_{j=k}^{[a/q]} (-1)^{a+j}\binom{(q-1)(b-j)-1}{a-qj}\binom{a+b-(q+1)k}{j-k},
$$
of mod $p$ binomial coefficients, where $0 \le k \le b$.   The change of variables $a'=a-qk$, $b'=b-k$ and $j'=j-k$ reduces this to the case $k=0$, namely
\begin{equation} \label{eqn_Adem_2^r_1}
\binom{a'+b'}{b'}  = \sum_{j'=0}^{[a'/q]} (-1)^{a'+j'}\binom{(q-1)(b'-j')-1}{a'-qj'}\binom{a'+b'}{j'}.
\end{equation}
Replacing $a', b'$  by $a,b$, we prove (\ref{eqn_Adem_2^r_1}) by induction on $a+b$.   The base case $(a,b)=(0,0)$ holds since $\binom{c}{0}=1$ for all integers $c$.   We assume that the cases $(a-1,b)$ and $(a,b-1)$ hold, and prove the case $(a,b)$.   For this we use the identity
\begin{equation} \label{eqn_Adem_2^r_2}
\binom{c-1}{d} + \binom{c-1}{d-1} = \binom{c}{d} = \binom{c-q}{d} + \binom{c-q}{d-q}
\end{equation}
for mod $p$ binomial coefficients, which follows from $(1+x)^c = (1+x)^q (1+x)^{c-q}$, since $(1+x)^q = 1+x^q$ mod $p$.   Writing $c =(q-1)(b-j)$ and $d =a-qj$, we can expand each term on the right of (\ref{eqn_Adem_2^r_1}) in the form
\begin{eqnarray*}
\binom{c-1}{d}\binom{a+b}{j} &=& \binom{c-1}{d}\binom{a+b-1}{j} + \binom{c-1}{d}\binom{a+b-1}{j-1} \\
&=& -\binom{c-1}{d-1}\binom{a+b-1}{j} + \binom{c-q}{d}\binom{a+b-1}{j}  \\
&& +  \binom{c-q}{d-q}\binom{a+b-1}{j} + \binom{c-1}{d}\binom{a+b-1}{j-1}.
\end{eqnarray*}
The alternating sum over $j$ of the first term on the right is $\binom{(a-1)+b}{b}$ and that of the second term is $\binom{a+(b-1)}{b-1}$, by the inductive hypothesis.   Since $c-q = (q-1)(b-j-1)-1$ and $d-q = a-q(j+1)$, the third and fourth terms cancel on taking the alternating sums over $j$.   Since $\binom{a+b-1}{b}+ \binom{a+b-1}{b-1} = \binom{a+b}{b}$, this completes the inductive step.
\end{proof}

The Adem relations lead to a basis of admissible monomials for $\A'_q$.

\begin{definition} \label{def_adm_q} {\rm
For $A = (a_1, a_2, \ldots, a_\ell)$, where $a_i \ge 0$ for $1 \le i \le \ell$, $P_e^A = P_e^{a_1}P_e^{a_2} \cdots P_e^{a_\ell}$ is a {\bf monomial} in $\A'_q$.   The sequence $A= (a_1, \ldots, a_\ell)$ and the monomial $P_e^A$ are {\bf admissible} if $a_i \ge qa_{i+1}$ for $1 \le i < \ell$.
}
\end{definition}

If $a_i=0$ for all $i$ then $P_e^0 =1$ is admissible.  Otherwise it suffices to consider finite sequences $A$ of positive integers.  If $A = (a_1, a_2, \ldots, a_\ell)$, with $a_i >0$ for $1 \le i \le \ell$, we define the {\bf length} $\len(A) =\ell$ and the {\bf modulus} $|A| = \sum_{i=1}^\ell a_i$.   By adding trailing zeros, we may identify $A$ with the infinite sequence $(a_1, a_2, \ldots)$, where $a_i =0$ for $i>\ell$.   We introduce two linear orders on such sequences, the {\bf left order} $<_l$ and the {\bf right order} $<_r$.

\begin{definition}\label{def_lrorders} {\rm
Let $A = (a_1, a_2, \ldots)$ and $B = (b_1, b_2, \ldots)$ be sequences of integers $\ge 0$.   Then $A <_l B$ if and only if, for some $k$, $a_j=b_j$ for $1 \le j < k$ and $a_k < b_k$, and $A <_r B$ if and only if, for some $k$, $a_j =b_j$ for $j > k$ and $a_k > b_k$.
}
\end{definition}

Thus $<_l$ is the usual left lexicographic order, but $<_r$ is the reversal of the usual right lexicographic order.

\begin{proposition} \label{admis_span}
Every  element of $\A'_q$ is a sum of admissible monomials.
\end{proposition}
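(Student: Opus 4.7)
Since $\A'_q$ is generated as an $\FF_p$-algebra by the elements $P_e^r$, every element is a linear combination of monomials $P_e^A$, so it suffices to rewrite each such monomial as a sum of admissible ones. My plan is to apply the Adem relations of Proposition~\ref{prop_Ademq} at the leftmost offending adjacent pair in $A$, and to argue termination by tracking the position of the resulting sequences in the left lexicographic order.

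Given $A = (a_1, \ldots, a_\ell)$ that is not admissible, I would let $i$ be the least index with $a_i < q a_{i+1}$ and apply Proposition~\ref{prop_Ademq} to rewrite $P_e^{a_i} P_e^{a_{i+1}}$ as a linear combination of terms $P_e^{a_i + a_{i+1} - j} P_e^j$ with $0 \le j \le [a_i/q]$. Substituting into $P_e^A$ yields
$$
P_e^A = \sum_{j=0}^{[a_i/q]} c_j \, P_e^{A^{(j)}}, \qquad A^{(j)} = (a_1, \ldots, a_{i-1},\, a_i + a_{i+1} - j,\, j,\, a_{i+2}, \ldots, a_\ell),
$$
with trailing zeros understood when $j = 0$. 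Since $j \le a_i/q < a_{i+1}$, the entry $a_i + a_{i+1} - j$ at position $i$ of every $A^{(j)}$ strictly exceeds $a_i$, while positions $1, \ldots, i-1$ are unchanged. Hence each $A^{(j)} >_l A$ in the left order of Definition~\ref{def_lrorders}.

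To conclude, I would note that the modulus is preserved, so every sequence arising from iterated rewriting has fixed modulus $d = |A|$. The restriction of $<_l$ to sequences of nonnegative integers of modulus $d$ admits no infinite ascending chain: the first entry is bounded by $d$ and can strictly increase only finitely many times, after which it stabilises and the argument iterates on the tail. Therefore repeated rewriting terminates in finitely many steps, expressing $P_e^A$ as a sum of admissible monomials.

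The only genuine subtlety is the choice of the \emph{leftmost} violation $i$: this is what freezes the first $i-1$ entries at each step, so that the strict increase at position $i$ translates into a strict increase in the left lexicographic order, turning the Adem rewriting into a well-founded recursion. Without that choice, an Adem application at a later position could disturb earlier entries when they are in turn rewritten, and one would have to work harder to exhibit a monovariant.
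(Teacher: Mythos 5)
Your proof is correct and follows essentially the same route as the paper: rewrite via the Adem relations and observe that each application strictly increases the exponent sequence in the left lexicographic order, so the process terminates. The paper's version is marginally simpler on two points: it applies the Adem relation at an arbitrary (not necessarily leftmost) violation, since positions $1,\ldots,k-1$ are untouched in any case and the entry at position $k$ still strictly increases, so the same monovariant works without your "leftmost" precaution; and it discards zero entries (as $P_e^0=1$), so that all sequences of modulus $d$ lie in the finite set of compositions of $d$ and termination is immediate, avoiding your somewhat delicate ascending-chain argument for zero-padded sequences (which is true, but whose "iterate on the tail" justification needs more care when entries stabilise at $0$; in your setting it is rescued by the fact that the rewriting never increases the length of the sequence).
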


\begin{proof}
For $d \ge 0$, let $S^d$ denote the set of sequences $A = (a_1, a_2, \ldots)$ such that $a_i > 0$ for $i \le \len(A)$ and $|A| =d$.  Each element of $S^d$ gives a  corresponding monomial $P_e^A \in (\A'_q)^d$.   If $A$ is not admissible, then $0 < a_k < qa_{k+1}$ for some $k$.

Using the Adem relation (\ref{Adem_q}) with $a = a_k$ and $b=a_{k+1}$, we may write $P_e^A$ as a sum of monomials $P_e^B$, where $B$ is obtained from $A$ by replacing $(a,b)$ by $(a+b-j,j)$ for some $j$ such that $0 \le j \le [a/q] < b$.   In the case $j=0$, we omit the corresponding term of $B$: this does not affect $P^B$ since $P_e^0 =1$.   Then $B >_{l,r} A$.   Hence $P_e^A$ can be written as a sum of monomials $P_e^B$ which are greater than $P_e^A$ in both the left and right orders.   Iteration of this procedure must stop, since $S^d$ is a finite set.   Hence $P_e^A$ can be expressed as a sum of admissible monomials.
\end{proof}

\begin{definition} \label{def_Milnorvec} {\rm
Let $A = (a_1, \ldots, a_\ell)$  be an admissible sequence of length $\ell$.   The {\bf Milnor sequence} of $A$, or of $Sq^A$, is $R = (r_1, \ldots, r_\ell)$, where $r_j = a_j - qa_{j+1}$ for $1 \le j < \ell$ and $r_\ell = a_\ell$.
}
\end{definition}

\begin{proposition} \label{Adm_Mil}
The map  which sends an admissible sequence $A$ to its Milnor sequence $R$ is a bijection from the set of admissible sequences to the set of finite sequences of integers $\ge 0$, and it preserves the length $\ell$ and the right order $<_r$.   If $P^A_e \in (\A'_q)^d$, then $|R| = qa_1 - d$ and $d = \sum_{j=1}^\ell (q^j-1)r_j$.
\end{proposition}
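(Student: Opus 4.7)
The proposition packages four claims—bijectivity, length preservation, order preservation, and two degree identities—and each succumbs to a short direct computation from the recurrence $r_j = a_j - q a_{j+1}$ (with the convention $a_{\ell+1}=0$). My plan is to handle them in that order.

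\textbf{Bijection and length.} The admissibility condition $a_j \ge q a_{j+1}$ is literally the condition $r_j \ge 0$ for $j < \ell$, and $r_\ell = a_\ell > 0$, so the map is well-defined and its image consists of finite sequences of non-negative integers with last term positive. The inverse is defined by $a_\ell = r_\ell$ and $a_j = r_j + q a_{j+1}$ for $j < \ell$, which manifestly yields an admissible sequence. Since $r_\ell = a_\ell$, the position of the last nonzero entry—that is, the length—is preserved.

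\textbf{Right order.} Suppose $A <_r B$, so that for some $k$ one has $a_j = b_j$ for $j > k$ and $a_k > b_k$. Then for every $j > k$ the entries $a_j,a_{j+1}$ coincide with $b_j,b_{j+1}$, whence $r_j^A = r_j^B$. At index $k$,
\[
r_k^A - r_k^B = (a_k - b_k) - q(a_{k+1} - b_{k+1}) = a_k - b_k > 0,
\]
giving $R^A <_r R^B$ by Definition~\ref{def_lrorders}. The only point requiring a little care is the paper's reversed convention for $<_r$; the sign of the above difference is exactly right for it.

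\textbf{Degree identities.} A telescoping sum gives
\[
|R| = \sum_{j=1}^{\ell-1}(a_j - q a_{j+1}) + a_\ell = a_1 - (q-1)(|A| - a_1) = q a_1 - (q-1)|A|.
\]
Iterating $a_j = r_j + q a_{j+1}$ downward from $a_\ell = r_\ell$ gives $a_1 = \sum_{j=1}^\ell q^{j-1} r_j$, and hence
\[
q a_1 - |R| = \sum_{j=1}^\ell q^j r_j - \sum_{j=1}^\ell r_j = \sum_{j=1}^\ell (q^j - 1) r_j = (q-1)|A|.
\]
Identifying this common quantity with the degree $d$ of $P_e^A$ yields both stated identities.

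No step is a genuine obstacle—the whole argument is elementary bookkeeping once the definitions are unpacked. The two items to stay alert to are the reversed convention for $<_r$ and the particular normalization of grading being used when one writes $P_e^A \in (\A'_q)^d$.
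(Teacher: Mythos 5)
Your proof is correct and follows essentially the same route as the paper: the explicit inverse $a_j = r_j + qa_{j+1}$ giving $a_1=\sum_i q^{i-1}r_i$, a direct check that $<_r$ is preserved, and telescoping/weighted sums for the two degree identities (with the normalization $d=(q-1)|A|$, which is the one the paper also tacitly uses). Your uniform handling of the right order via the convention $a_{\ell+1}=0$ even dispenses with the paper's separate remark about sequences of different lengths.
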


\begin{proof}
Since $r_\ell =a_\ell$, the map $A \mapsto R$ preserves length.   Given $R = (r_1, \ldots, r_\ell)$ of length $\ell$, the linear equations $r_j = a_j - qa_{j+1}$ can be solved recursively for $j =\ell, \ell-1, \ldots, 1$ to give $a_j = \sum_{i=j}^\ell q^{i-j}r_i$.   In particular, $a_1 = \sum_{i=1}^\ell q^{i-1}r_i$.   Since $A = (a_1, a_2, \ldots)$ is admissible, these equations give an inverse map $R \mapsto A$.

For the right order, it suffices to consider sequences of the same length $\ell$, since $A <_r B$ when $\len(A) > \len(B)$.   Let $A = (a_1, \ldots, a_\ell)$ and $B = (b_1, \ldots, b_\ell)$ be admissible sequences of length $\ell$, with corresponding Milnor sequences $R =(r_1, \ldots, r_\ell)$  and $S =(s_1, \ldots, s_\ell)$.   If $a_j = b_j$ for $j > k$ and $a_k > b_k$, then $r_j = s_j$ for $j > k$ and $r_k > s_k$.   Hence $R <_r S$ if $A <_r B$.   The sum of the equations $r_j = a_j - qa_{j+1}$, $1 \le j < \ell$, gives $|R| = a_1 - (a_2 + \cdots + a_s) = qa_1-d$.   With the $j$th equation weighted by $q^{j-1}$, $d = \sum_{j=1}^\ell (q^j-1)r_j$.
\end{proof}

\begin{proposition}  \label{admissibles_p^e}
 The set of admissible monomials is a $\FF_p$-basis for $\A'_q$.
\end{proposition}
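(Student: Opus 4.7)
The plan is to combine the spanning result of Proposition \ref{admis_span} with a Hilbert series comparison, using the bijection of Proposition \ref{Adm_Mil} to match admissibles with Milnor basis elements degree by degree. Since admissibles are already known to span, only linear independence remains, and this will follow from a dimension count in each graded piece.

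First I would verify that the bijection $A \leftrightarrow R$ between admissible sequences and arbitrary finite sequences of non-negative integers, given by Proposition \ref{Adm_Mil} via $a_j = \sum_{i \ge j} q^{i-j} r_i$, is degree-preserving. For this I would use that $P_e^r \in \A'_q$ has degree $r(q-1)/(p-1)$ in the grading inherited from $\A_p$, so that $P_e^A$ has degree $|A|(q-1)/(p-1)$; substituting the recursion for $a_j$ and switching the order of summation yields
$$
|A|\frac{q-1}{p-1} = \sum_{j \ge 1} r_j \frac{q^j-1}{p-1},
$$
which by the Milnor degree formula (as derived in the proof of Proposition \ref{PseriesofA'q}) is precisely the degree of $P_e(R)$.

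Consequently the generating function counting admissible monomials by degree equals
$$
\sum_{R} \tau^{\sum_j r_j(q^j-1)/(p-1)} = \prod_{j \ge 1} \frac{1}{1 - \tau^{(q^j-1)/(p-1)}},
$$
where the sum ranges over all finite sequences $R$ of non-negative integers. By Proposition \ref{PseriesofA'q} this is exactly $\Pi(\A'_q, \tau)$, so in each degree the number of admissibles coincides with $\dim_{\FF_p}(\A'_q)^d$. A spanning set of the correct cardinality in a finite-dimensional space is automatically a basis. The whole argument is essentially bookkeeping, and I expect no real obstacle; the only point requiring care is keeping the grading conventions consistent between the admissible presentation and the Milnor basis, but this is already encoded in the recursion of Proposition \ref{Adm_Mil}.
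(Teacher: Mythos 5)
Your argument is correct and is essentially the paper's own proof: spanning from Proposition \ref{admis_span} combined with the degree-preserving bijection of Proposition \ref{Adm_Mil} between admissible monomials and Milnor basis elements, which forces the count in each degree to match $\dim(\A'_q)^d$. You merely make explicit the Poincar\'e-series bookkeeping that the paper leaves implicit, so there is nothing to add.
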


\begin{proof}
By Proposition \ref{admis_span}, the admissible monomials span $\A'_q$, and it follows from Proposition \ref{Adm_Mil} that there is a bijection between admissible monomials and Milnor basis  elements in $\A'_q$.
\end{proof}

Following \cite{book}, we denote by $\bin(a)$ the set of $2$-powers in the base $2$ decomposition of an integer $a \ge 0$.   For an odd prime $p$, this becomes a multiset.

\begin{definition} \label{def_pin} {\rm
For an integer $a \ge 0$, let $a = \sum_{i=0}^r a_i p^i$, where $0 \le a_i \le p-1$ for all $i$.  We denote by $\pin(a)$ the multiset of $\alpha(d)$ powers of $p$ whose sum is $d$.
}
\end{definition}

When $p=3$, for example, $\pin(25) = \{1,3,3,9,9\}$ and $\alpha(25) =5$.   Thus $\alpha(a)$ is the number of elements in $\pin(a)$, counted with multiplicity $\le p-1$.   The binomial coefficient $\binom{a}{b} \neq 0$ mod $p$ if and only if $\pin(b)$ is a sub-multiset of $\pin(a)$.

\begin{proposition}  \label{mingen_p^e}
The set of elements $P_e^{p^j}$, $j \ge 0$, is a minimal generating set for $\A'_q$ as an algebra over $\FF_p$.
\end{proposition}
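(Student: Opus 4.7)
The statement has two parts: (a) the elements $P_e^{p^j}$ generate $\A'_q$ as an $\FF_p$-algebra, and (b) none of them is redundant.

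For (a), I would argue by strong induction on $n \ge 1$ that $P_e^n$ lies in the subalgebra $B \subseteq \A'_q$ generated by $\{P_e^{p^j} : j \ge 0\}$; combined with Proposition~\ref{admissibles_p^e}, this will give $B = \A'_q$. If $n$ is a power of $p$, the claim is immediate. Otherwise, let $p^k$ be the smallest power of $p$ occurring in the base-$p$ expansion of $n$, put $m = n - p^k > 0$, and verify $p^k < qm$ so that the Adem relation (\ref{Adem_q}) applies to $P_e^{p^k} P_e^m$:
\[
P_e^{p^k} P_e^m = (-1)^{p^k}\binom{(q-1)m - 1}{p^k}\, P_e^n + \sum_{j \ge 1} (-1)^{p^k + j}\binom{(q-1)(m-j) - 1}{p^k - qj}\, P_e^{n-j} P_e^j.
\]
Every factor appearing on the right beyond $P_e^n$ has degree strictly less than $n$, so by the inductive hypothesis all such factors lie in $B$. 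Provided the leading coefficient $\binom{(q-1)m-1}{p^k}$ is a unit modulo $p$, one can solve for $P_e^n$ and conclude $P_e^n \in B$.

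For (b), I would use the action of $\A'_q \subseteq \A_p$ on $\FF_p[x] \cong H^*(B\mathbb{Z}/p;\FF_p)$ with $|x|=2$. Milnor's action formula gives $P_e^{p^j}(x^n) = \binom{n}{p^j}\, x^{n + p^j(q-1)}$, and in particular $P_e^{p^j}(x^{p^j}) = x^{p^{j+e}} \ne 0$. For any composite $P_e^{p^{i_k}} \circ \cdots \circ P_e^{p^{i_1}}$ with $k \ge 2$, iterated application of Lucas's theorem forces $i_l = j + (l-1)e$ for every $l$ if the composite is to act nontrivially on $x^{p^j}$; but then its reduced degree equals $\sum_l p^{i_l} = p^j + p^{j+e} + \cdots + p^{j+(k-1)e}$, strictly greater than $p^j$. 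Hence every homogeneous sum of products of $\ge 2$ generators of reduced degree $p^j$ annihilates $x^{p^j}$, while $P_e^{p^j}$ does not. Consequently $P_e^{p^j}$ is indecomposable, and $\{P_e^{p^j} : j \ge 0\}$ is a minimal generating set.

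The main obstacle is the combinatorial verification in (a) that $\binom{(q-1)m-1}{p^k} \not\equiv 0 \pmod p$ for the chosen $p^k$. Using the identity $(q-1)m - 1 = p^e m - m - 1$ together with Lucas's theorem, a short case split on whether the base-$p$ digit $n_k$ equals $1$ or is at least $2$ traces the borrow pattern below position $k$ and confirms that the digit at position $k$ of $(q-1)m - 1$ is positive. Once this combinatorial point is settled, both halves of the proof run through as described.
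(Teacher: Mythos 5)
Your proposal is correct and follows essentially the same route as the paper: for generation, both peel off the lowest base-$p$ digit of $n$ (the paper writes $n=p^rs$ with $a=p^r$, $b=n-p^r$, which is your $p^k$ and $m$) and verify via a digit/Lucas argument that the $j=0$ Adem coefficient $\binom{(q-1)m-1}{p^k}$ is nonzero mod $p$; and for minimality, both evaluate on $x^{p^j}\in\FF_p[x]$, where every decomposable of the right degree acts by zero while $P_e^{p^j}(x^{p^j})=x^{qp^j}\ne 0$. Your outlined case split on $n_k=1$ versus $n_k\ge 2$ does check out (the digit of $(q-1)m-1$ at position $k$ is $p-1$ or $p-n_k$, both positive), so the only difference from the paper is the bookkeeping of that verification.
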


\begin{proof}
Since $P_e^0=1$, $\A'_q$ is generated by the elements $P_e^k$, $k \ge 1$.   If $k$ is not a power of $p$, then $k = p^rs$, where $r \ge 0$ and $s \ge 1$.   Let $a = p^r$ and $b = p^r(s-1)$ in the Adem relation (\ref{Adem_q}).   We claim that $p^r \in \pin((p-1)b-1)$.   Since $(p-1)b -1 = -1$ mod $p^r$, this is equivalent to proving that $(p-1)b -1 \neq p^r-1$ mod $p^{r+1}$.  Since $b=p^r(s-1)$,  $(p-1)b-1 = p^r -1$ mod $p^{r+1}$ if and only if  $(s-1)(p-1) = 1$ mod $p$, and this is equivalent to $s(p-1) =0$ mod $p$.   Since $s \neq 0$ mod $p$, this proves the claim.

It follows that $\binom{(p-1)b-1}{a} \neq 0$ mod $p$, and so the last term $P_e^{a+b} = P_e^k$ appears in the Adem relation.   Hence $P_e^k$ is in the subalgebra generated by the elements $P_e^i$ for $i<k$.   By iterating the argument, it follows  that $\A'_q$ is generated by the elements $P_e^{p^j}$, $j \ge 0$.

If $P_e^{p^j}$ could be omitted from this generating set, then by considering the grading on $\A_q$ it would follow that $P_e^{p^j}$ is in the subalgebra generated by the elements $P_e^i$ for $i < p^j$.   But this is false, since for the action of $\A_p$ on $\FF_p[x]$, we have $P_e^i(x^{p^j}) = 0$ for $0 < i < p^j$, while $P_e^{p^j}(x^{p^j}) =  x^{qp^j}$.   Hence the generating set is minimal.
\end{proof}

The next result shows that $\A'_q$ is a Hopf subalgebra of $\A_p$.  The proof follows from the corresponding formulae in $\A_p$.

\begin{proposition} \label{coprod5_q}
The coproduct $\phi: \A'_q \rightarrow \A'_q \otimes  \A'_q$ satisfies

{\rm (i)} $\phi(P_e^k) = \sum_{i+j=k} P_e^i \otimes P_e^j$ for all $k \ge 0$,

{\rm (ii)} $\phi(P_e(R)) = \sum_{S+T=R} P_e(S) \otimes P_e(T)$ for all sequences $R$.
\qed
\end{proposition}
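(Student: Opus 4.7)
The plan is to deduce both statements directly from the Milnor coproduct formula on $\A_p$, using the identification $P_e(R) = P(R_e)$ that packages $\A'_q$ as a sub-vector-space of $\A_p$ supported on Milnor basis elements whose nonzero entries occur only in positions divisible by $e$.

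First I would record (ii) and derive (i) as the special case $R=(k)$, since $P_e^k = P_e((k))$. For (ii), I would start with the Milnor coproduct in $\A_p$, namely $\phi(P(T)) = \sum_{T'+T''=T} P(T') \otimes P(T'')$, and apply it to $T = R_e$. The key observation is that if $T' + T'' = R_e$, then in every position $j$ not of the form $ke$ the $j$th entries of $T'$ and $T''$ are non-negative integers summing to $0$, hence both vanish. Therefore $T' = S_e$ and $T'' = U_e$ for unique sequences $S$ and $U$ satisfying $S+U=R$, and the sum over $(T',T'')$ transports bijectively to a sum over $(S,U)$ with $S+U=R$. Substituting $P(S_e) = P_e(S)$ and $P(U_e) = P_e(U)$ then yields the formula in (ii). Setting $R=(k)$ gives (i).

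The only remaining point is that this computation shows that the coproduct on $\A_p$ actually restricts to a map $\A'_q \to \A'_q \otimes \A'_q$: both tensor factors on the right of (ii) are again of the form $P_e(\cdot)$, so they lie in $\A'_q$ by Definition \ref{def_P_e(R)}. Combined with the fact that $\A'_q$ is already a subalgebra of $\A_p$ (shown earlier), this confirms that $\A'_q$ is a Hopf subalgebra, as asserted in the proposition's preamble.

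There is no substantive obstacle; the argument is essentially bookkeeping, and the only thing to be careful about is the uniqueness step in the bijection $(T', T'') \leftrightarrow (S, U)$. That step relies on the fact that non-negative integers summing to zero must both be zero, which is exactly what forces $T'$ and $T''$ to be supported on multiples of $e$. I would present the proof as a single short paragraph after stating the Milnor coproduct formula on $\A_p$ as a cited fact, with (i) deduced in one line from (ii).
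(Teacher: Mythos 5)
Your argument is correct and is exactly the route the paper intends: the paper gives no detailed proof, stating only that the result "follows from the corresponding formulae in $\A_p$," and your bookkeeping with the Milnor coproduct formula applied to $T = R_e$ (using that nonnegative entries summing to zero must vanish) is the right way to fill that in. The derivation of (i) from (ii) via $R=(k)$ is also fine.
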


The dual Hopf algebra $\A_p^*$ of $\A_p$ is a polynomial algebra over $\FF_p$ with generators $\xi_j$, $j \ge 1$, of degree $p^j-1$ \cite{Milnor}.  The dual algebra $(\A'_q)^*$ is a quotient of $\A_p^*$, and may be described as follows.

\begin{proposition} \label{prop_dualq}
For $e \ge 1$ and $q=p^e$, $(\A'_q)^* = \A_p^*/I_q$, where $I_q$ is the Hopf ideal in $\A_p^*$ generated by the elements $\xi_j$ such that $e$ does not divide $j$.   The algebra
$(\A'_q)^*$ is a polynomial algebra over $\FF_p$ with generators $\xi_{ke}$, $k \ge 1$ of degree $q^k-1$.   The coproduct in $(\A'_q)^*$ is defined by $\phi(\xi_{ke}) = \sum_{i=0}^k \xi_{(k-i)e}^{q^i} \otimes \xi_{ie}$, and the antipode by $\chi(\xi_{ke}) = \sum_A \xi(A)$, where the sum is over compositions $A = (a_1, \ldots, a_s)$ of $k$, and $\xi(A) = \xi_{a_1e} \xi_{a_2e}^{q^{a_1}} \xi_{a_3e}^{q^{a_1+a_2}} \cdots \xi_{a_se}^{q^{a_1+ \cdots + a_{s-1}}}$.
\qed
\end{proposition}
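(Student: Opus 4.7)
The plan is to dualize the inclusion $\A'_q \hookrightarrow \A_p$, identify the kernel of the resulting surjection of dual Hopf algebras as the ideal $I_q$, and then derive the stated coproduct and antipode formulas by restricting Milnor's known formulas for $\A_p^*$.

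First I would show $(\A'_q)^* \cong \A_p^*/I_q$ as algebras. The Milnor dual basis $\xi(R) = \xi_1^{r_1}\xi_2^{r_2}\cdots$ of $\A_p^*$ pairs with $\{P(R)\}$, and by Definition \ref{def_P_e(R)} the basis of $\A'_q$ consists of those $P(R)$ for which $r_j = 0$ whenever $e \nmid j$. Under restriction $\A_p^* \twoheadrightarrow (\A'_q)^*$, each generator $\xi_j$ of $I_q$ (with $e \nmid j$) pairs trivially with every element of this basis, so $I_q$ lies in the kernel. Conversely, $\A_p^*/I_q$ is the polynomial algebra on $\{\xi_{ke} : k \ge 1\}$, and its monomial basis $\prod_k \xi_{ke}^{r_k}$ is in degree-preserving bijection with the Milnor basis of $\A'_q$ (note $\deg \xi_{ke} = p^{ke} - 1 = q^k - 1$). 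A dimension comparison using Proposition \ref{PseriesofA'q} then forces $I_q$ to equal the kernel.

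To upgrade this to an isomorphism of Hopf algebras I would verify that $I_q$ is a Hopf ideal. For the coproduct: if $e \nmid n$, then in Milnor's formula $\phi(\xi_n) = \sum_{i=0}^{n} \xi_{n-i}^{p^i} \otimes \xi_i$, for every $i$ at least one of $i$ and $n-i$ is not divisible by $e$ (since $e$ does not divide their sum $n$), so every summand lies in $I_q \otimes \A_p^* + \A_p^* \otimes I_q$. For the antipode, induction on $n$ using the recursion $\chi(\xi_n) = -\sum_{i=0}^{n-1} \xi_{n-i}^{p^i} \chi(\xi_i)$ shows $\chi(\xi_n) \in I_q$ when $e \nmid n$.

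The explicit formulas then fall out by restriction. Reducing $\phi(\xi_{ke})$ modulo $I_q$ retains only the summands indexed by $i = je$ for $0 \le j \le k$, giving $\phi(\xi_{ke}) = \sum_{j=0}^{k} \xi_{(k-j)e}^{q^j} \otimes \xi_{je}$ since $p^{je} = q^j$. For the antipode, I would iterate the restricted recursion $\chi(\xi_{ke}) = -\sum_{j=1}^{k} \xi_{(k-j)e}^{q^j}\chi(\xi_{je})$; each iteration peels off one part $a_i$ of a composition and contributes a factor $\xi_{a_ie}^{q^{a_1+\cdots+a_{i-1}}}$, so after full expansion the terms are indexed by compositions $A=(a_1,\ldots,a_s)$ of $k$ with the stated monomial $\xi(A)$. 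The main obstacle is the sign bookkeeping in this last step: each recursive application of the antipode axiom contributes a factor of $-1$, so a composition of length $s$ carries coefficient $(-1)^s$; this is trivial for $p=2$ and must be understood implicitly in the stated formula for odd $p$.
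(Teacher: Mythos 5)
Your proof is correct and is essentially the argument the paper leaves implicit (Proposition \ref{prop_dualq} is stated with no proof, as a direct consequence of Milnor's description of $\A_p^*$): dualize the inclusion $\A'_q \subset \A_p$, identify $I_q$ with the annihilator by a graded dimension count, check that $I_q$ is a Hopf ideal, and restrict Milnor's coproduct and antipode formulas. Your remark about signs is a genuine and worthwhile catch: for odd $p$ the antipode formula as literally printed requires the coefficient $(-1)^s$ on the term indexed by a composition of length $s$ (already $\chi(\xi_e)=-\xi_e$ shows this), so the signs must indeed be read as implicit in $\xi(A)$.
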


We next introduce the algebra $\A_q$ of Steenrod $q$th powers \cite[Chapter 11]{Smith}.

\begin{definition} \label{def_Aq} {\rm
Let $q =p^e$ be a prime power, where $e \ge 1$.   The algebra $\A_q$ of Steenrod $q$th powers  is the algebra over the Galois field $\FF_q$ generated by elements $P^r$, $r \ge 0$, subject to the relation $P^0 =1$ and the Adem relations (\ref{Adem_q}).
}
\end{definition}

Note that the coefficients in the Adem relations lie in the prime subfield $\FF_p$, so that we also have an algebra defined over $\FF_p$ with the same generators and relations.   The preceding results allow us to identify this algebra with $\A'_q$.

\begin{proposition} \label{Aq_iso}
There is an isomorphism $\rho: \A_q \rightarrow \A'_q \otimes_{\FF_p} \FF_q$ of Hopf algebras defined by $\rho(P^r) = P_e^r$, $r \ge 0$.
\qed
\end{proposition}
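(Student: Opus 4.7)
The plan is to show that the $\FF_q$-algebra presentation of $\A_q$ is exactly what one obtains from the $\FF_p$-algebra presentation of $\A'_q$ by extension of scalars. First, the assignment $P^r \mapsto P_e^r \otimes 1$ extends to a well-defined $\FF_q$-algebra homomorphism $\rho : \A_q \to \A'_q \otimes_{\FF_p} \FF_q$: the coefficients of the Adem relations (\ref{Adem_q}) lie in $\FF_p \subseteq \FF_q$, and by Proposition \ref{prop_Ademq} the elements $P_e^r$ of $\A'_q$ satisfy precisely those relations, together with the trivial identity $P_e^0 = 1$.

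Surjectivity of $\rho$ is immediate: by Proposition \ref{mingen_p^e} the elements $P_e^{p^j}$, and hence all the $P_e^r$, generate $\A'_q$ as an $\FF_p$-algebra, so the elements $P_e^r \otimes 1$ generate $\A'_q \otimes_{\FF_p} \FF_q$ as an $\FF_q$-algebra.

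For injectivity, I would repeat the Adem-reduction argument of Proposition \ref{admis_span} inside $\A_q$; that argument uses only the relation $P^0 = 1$ and the Adem relations, so it applies verbatim over $\FF_q$ to show that the admissible monomials $P^A$ span $\A_q$ as an $\FF_q$-vector space. Under $\rho$, $P^A \mapsto P_e^A \otimes 1$, and by Proposition \ref{admissibles_p^e} the admissibles $P_e^A$ form an $\FF_p$-basis of $\A'_q$, hence after tensoring form an $\FF_q$-basis of $\A'_q \otimes_{\FF_p} \FF_q$. Thus $\rho$ sends a spanning set of $\A_q$ bijectively onto an $\FF_q$-basis of the target, which simultaneously forces the admissibles to be an $\FF_q$-basis of $\A_q$ and $\rho$ to be an $\FF_q$-linear isomorphism.

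For the Hopf algebra statement, once $\rho$ is known to be an algebra isomorphism we may transport the Hopf structure on $\A'_q \otimes_{\FF_p} \FF_q$ (obtained from the coproduct and antipode of $\A'_q$ given by Propositions \ref{coprod5_q} and \ref{prop_dualq}) back to $\A_q$ along $\rho^{-1}$; the resulting coproduct satisfies $\phi(P^k) = \sum_{i+j=k} P^i \otimes P^j$, and $\rho$ is then a Hopf algebra homomorphism by construction. The only step that requires genuine work is the injectivity, and even there the argument is a direct transcription of Proposition \ref{admis_span}; the rest of the proposition is essentially bookkeeping about presentations and extension of scalars.
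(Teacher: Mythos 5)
Your proposal is correct and assembles exactly the ingredients the paper intends the reader to combine: Proposition \ref{prop_Ademq} for well-definedness of $\rho$, Propositions \ref{admis_span} and \ref{admissibles_p^e} for bijectivity via the admissible monomials, and Proposition \ref{coprod5_q} for compatibility with the coproduct; the paper states the result with a \qed precisely because it follows from these preceding results. The only cosmetic difference is that you obtain the Hopf structure on $\A_q$ by transport along $\rho$ rather than by defining $\phi(P^k)=\sum_{i+j=k}P^i\otimes P^j$ on $\A_q$ directly and checking compatibility with the Adem relations, but either reading makes the statement true.
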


As for $\A_p$, we grade $\A_q$ by assigning degree $r$ to $P^r$ for $r \ge 0$.   Since $P_e^r =P(0, \ldots, 0, r) \in \A_p$, with $r$ in position $e$, has degree $r(q-1)/(p-1)$, the map $\rho$ multiplies the grading by $(q-1)/(p-1)$.   The algebra $\A_q$ has a Milnor basis given by elements $P(R)$, where  $\rho(P(R)) = P_e(R)$.   The degree of $P(R)$ in $\A_q$ is $\sum_i r_i(q^i-1)/(q-1)$.   Then Propositions \ref{PseriesofA'q} and \ref{mingen_p^e} give the following result.

\begin{proposition} \label{PseriesofAq}
{\rm (i)} The elements $P^{p^j}$, $j \ge 0$, form a minimal generating set for $\A_q$ as an algebra over $\FF_q$.

{\rm (ii)} The Poincar\'e series of $\A_q$ is $\Pi(\A_q, \tau) =\prod_{j \ge 1} 1/(1-\tau^{(q^j-1)/(q-1)})$.
\qed
\end{proposition}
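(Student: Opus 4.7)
The plan is to deduce both parts directly from the isomorphism $\rho: \A_q \to \A'_q \otimes_{\FF_p} \FF_q$ of Proposition \ref{Aq_iso}, which sends $P^{p^j}$ to $P_e^{p^j}$ and multiplies degrees by $(q-1)/(p-1)$. Everything else is bookkeeping transported from Propositions \ref{mingen_p^e} and \ref{PseriesofA'q}.

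For part (i), since $\{P_e^{p^j}\}_{j \ge 0}$ generates $\A'_q$ as an $\FF_p$-algebra by Proposition \ref{mingen_p^e}, it generates $\A'_q \otimes_{\FF_p} \FF_q$ as an $\FF_q$-algebra; pulling back through $\rho^{-1}$ shows that the $P^{p^j}$ generate $\A_q$ over $\FF_q$. For minimality I would repeat the argument at the end of the proof of Proposition \ref{mingen_p^e}: the action of $\A'_q$ on $\FF_p[x]$ extends by base change to an action of $\A_q$ on $\FF_q[x]$ satisfying $P^i(x^{p^j}) = 0$ for $0 < i < p^j$ while $P^{p^j}(x^{p^j}) = x^{qp^j} \neq 0$. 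Any expression for $P^{p^j}$ as a polynomial in $\{P^{p^i} : i < j\}$ would therefore annihilate $x^{p^j}$, a contradiction.

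For part (ii), the Milnor basis $\{P(R)\}$ is an $\FF_q$-basis of $\A_q$ in which $P(R)$ has degree $\sum_{j \ge 1} r_j(q^j-1)/(q-1)$, as noted in the discussion preceding the proposition. Since the $r_j$ range independently over the non-negative integers, the Poincar\'e series factors as
\[
\Pi(\A_q, \tau) \;=\; \prod_{j \ge 1} \sum_{r \ge 0} \tau^{r(q^j-1)/(q-1)} \;=\; \prod_{j \ge 1} \frac{1}{1 - \tau^{(q^j-1)/(q-1)}}.
\]
Equivalently, one can substitute $\tau \mapsto \tau^{(p-1)/(q-1)}$ into the formula of Proposition \ref{PseriesofA'q}, using $\dim_{\FF_q}(\A_q)^d = \dim_{\FF_p}(\A'_q)^{d(q-1)/(p-1)}$, which rescales each exponent $(q^j-1)/(p-1)$ to $(q^j-1)/(q-1)$.

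The only step requiring any real thought is the minimality in (i); even there, if one prefers to avoid the action on $\FF_q[x]$, it suffices to observe that an $\FF_p$-basis of the subalgebra of $\A'_q$ generated by $\{P_e^{p^i} : i \neq j\}$ remains $\FF_q$-linearly independent after tensoring with $\FF_q$, so the minimality of Proposition \ref{mingen_p^e} transfers verbatim.
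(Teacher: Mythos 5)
Your proposal is correct and follows essentially the same route as the paper, which proves this proposition simply by transporting Propositions \ref{PseriesofA'q} and \ref{mingen_p^e} through the isomorphism $\rho$ of Proposition \ref{Aq_iso} (with the degree rescaling by $(q-1)/(p-1)$ accounting for the change of exponents in the Poincar\'e series). You merely spell out the base-change details for minimality that the paper leaves implicit.
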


The Milnor product formula holds in $\A_q$ as in $\A_p$, with the row sum condition $r_i = \sum_{j \ge 0} p^j x_{i,j}$ on Milnor matrices replaced by $r_i = \sum_{j \ge 0} q^j x_{i,j}$, the column and diagonal sum conditions unchanged, and the multinomial coefficients taken in $\FF_p$.

The standard action of $\A_p$ on the polynomial algebra $\FF_p[x]$ is determined by the formula $P^k(x^d) = \binom{d}{k}x^{d+(p-1)k}$, so that $P^k(x^d)$ is a nonzero multiple of $x^{d+k}$ if $\pin(k) \subseteq \pin(d)$, and $P^k(x^d) =0$ otherwise.   It is usual to grade $\FF_p[x]$ by giving $x$ degree $2$ if $p>2$, but we assign grading $1$ to $x$ in all cases, as we deal only with $\A_p$ and not the full Steenrod algebra ${\cal A}_p$.   Thus the action of $P^k$ on a power of $x$ raises the degree by $k(p-1)$.   The Milnor basis element $P(0, \ldots, 0, k) \in \A'_q$, with $k$ in position $e$, acts on $\FF_p[x]$ by $P(0, \ldots, 0, k)(x^d) = \binom{d}{k}x^{d+(q-1)k}$.   We use the isomorphism $\rho$ of Proposition \ref{Aq_iso} to transfer this action to $\A_q$, so that (with notation in $\A_q$) the corresponding statement is $P^k(x^d) = \binom{d}{k}x^{d+(q-1)k}$.   This action of $\A_q$ in $\FF_p[x]$ can alternatively be defined directly, as in \cite[Chapter 1]{book}, by defining a `total Steenrod $q$th power' $\P = \sum_{k \ge 0} P^k$ acting on the polynomial algebra $\FF_p[x_1, \ldots, x_n]$ by the formulae $\P(1) =1$, $\P(x_i) =x_i + x_i^q$ for $1 \le i \le n$ and the Cartan formula $\P(fg) = \P(f)\P(g)$ for polynomials $f$ and $g$.

For $p=2$ and $k>0$, the combinatorial description of the formula $Sq^k(x^d) = x^{d+k}$ given in \cite[Proposition 6.1.2]{book} shows that the (reverse) base $2$ expansion of $d+k$ is obtained from that of $d$ by replacing at least one subsequence of the form $1 \cdots 1\ 0$ by a subsequence $0 \cdots 0\ 1$ of the same length.   Hence the number of digits $1$ in the sequence may be decreased, but not increased, and so $\alpha(d+k) \le \alpha(d)$.   A similar argument holds for an odd prime $p$, using the reversed base $p$ expansions of $d$ and $d+k(p-1)$.   We find that the expansion of $d+k(p-1)$ is obtained from that of $d$ by replacing at least one subsequence $a_1 \cdots a_{r-1}\ 0$, where $0 \le a_i \le p-1$, by a subsequence $0\ b_2 \cdots b_r$, with $\sum_i b_i \le \sum_i a_i$.   The same is true for the action of a general element $\theta \in \A_p$, since $\theta$ is a sum of compositions of the elements $P^k$.  The generalization to $\A_q$ behaves in the same way, using the mod $p$ expansions of $d$ and $d+k(p-1)$, since the action of $\A_q$ on $\FF_p[x_1, \ldots, x_n]$ is the same as that of $\A'_q$.

\section{The May filtration of $\A_q$} \label{sec_May}

The May filtration of $\A_q$ is defined as the filtration by powers of the augmentation ideal $\A_q^+$.   The following result extends  Theorem \ref{prop_May} to $\A_q$.

\begin{proposition} \label{May_filt_Mil_q}
For $R = (r_1, r_2, \ldots, r_\ell)$, the Milnor basis element $P(R) \in \A_q$ has May filtration $M(P(R)) = \sum_{i=1}^\ell i\, \alpha(r_i)$.   In particular, for $s \ge 0$ and $t \ge 1$ the element $P^s_t = P(0, \ldots, 0, p^s)$, where $p^s$ is in position $t$, has May filtration $t$.
\end{proposition}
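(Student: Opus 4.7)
The plan is to transfer the statement to $\A'_q \subseteq \A_p$ via the isomorphism $\rho$ of Proposition \ref{Aq_iso}, and compare the May filtration of $\A'_q$ with that of the ambient $\A_p$, for which Theorem \ref{prop_May} provides the answer. Under $\rho$, each indecomposable generator $P^{p^j}$ of $\A_q^+$ corresponds to $P_e^{p^j} = P(0, \ldots, 0, p^j) \in \A_p$ with $p^j$ in position $e$, which by Theorem \ref{prop_May} has $\A_p$-filtration exactly $e$. Consequently $(\A_q^+)^m \subseteq (\A_p^+)^{me}$ for every $m \ge 0$, and $P(R) \in \A_q$ maps to $P(R_e) \in \A_p$ whose $\A_p$-filtration equals $e \sum_k k\, \alpha(r_k)$ by Theorem \ref{prop_May} applied to the shifted sequence $R_e$.

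The upper bound $M(P(R)) \le \sum_i i\, \alpha(r_i)$ is then immediate: if $M_{\A_q}(P(R)) = m$, then $P_e(R) \in (\A_p^+)^{me}$, forcing $me \le e \sum_k k\, \alpha(r_k)$, whence $m \le \sum_k k\, \alpha(r_k)$.

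For the lower bound $M(P(R)) \ge N(R) := \sum_i i\, \alpha(r_i)$, which is the main obstacle, I would exhibit $P(R)$ explicitly as an element of $(\A_q^+)^{N(R)}$ by constructing a product of $N(R)$ indecomposable generators that equals $\pm P(R)$ modulo Milnor basis terms $P(T)$ with $N(T) > N(R)$. Granted this, descending induction on $N$ finishes: the higher-$N$ terms lie inductively in $(\A_q^+)^{N+1}$, so subtracting them identifies $\pm P(R)$ as an element of $(\A_q^+)^{N(R)}$. The product is assembled in two stages. First, an induction on $t$ shows that each $P^s_t$ is congruent modulo higher-$N$ Milnor terms to a product $P^{p^{s+(t-1)e}} \cdot P^s_{t-1}$: applying the Milnor product formula in $\A_q$ yields $\pm P^s_t$ from one distinguished matrix and Milnor elements $P(T)$ with $N(T) > t$ from the others. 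Iterating writes $P^s_t$ as a product of $t$ generators modulo higher-$N$ terms. Second, for general $R$, I would enumerate each multiset $\pin(r_i) = \{p^{s_{i,1}}, \ldots, p^{s_{i,\alpha(r_i)}}\}$ and apply the Milnor product formula again to show that an appropriately ordered product $\prod_{i,k} P^{s_{i,k}}_i$ equals $\pm P(R)$ modulo higher-$N$ Milnor terms. Substituting the Stage 1 factorization for each $P^{s_{i,k}}_i$ realizes $P(R)$ as a product of $N(R)$ generators up to the higher-$N$ error. The special claim $M(P^s_t) = t$ is the case of a single-entry sequence.

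The principal technical obstacle is a combinatorial sub-additivity lemma for the invariant $N$: for every Milnor matrix contributing non-trivially to $P(R) \cdot P(S) = \sum_T c_T P(T)$ in $\A_q$, the output sequence $T$ satisfies $N(T) \ge N(R) + N(S)$, with equality only for a specific distinguished matrix. This controls both stages above and rests on analyzing the interplay between the column-sum conditions, the $q$-weighted row-sum conditions, and the non-vanishing modulo $p$ of the multinomial coefficient, together with the combinatorics of $\alpha$ under redistribution of base-$p$ digits among the entries of the matrix.
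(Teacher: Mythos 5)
Your upper bound is circular within the logic of the paper: Theorem \ref{prop_May} is precisely the case $q=p$ of the proposition you are proving, and the paper's announced purpose in Section \ref{sec_May} is to \emph{prove} that theorem (this is the content of Proposition \ref{May_filt_Mil_2}), not to assume it. Even granting May's theorem as an external input for $\A_p$, your proposal supplies no mechanism for the inequality $M(P(R)) \le \sum_i i\,\alpha(r_i)$; it is simply deferred. The paper's actual argument here is the action on polynomials: $P(R)$ applied to $y_1^{r_1}\cdots y_\ell^{r_\ell}$ produces the monomial $y_1^{qr_1}\cdots y_\ell^{q^\ell r_\ell}$, whose base $p$ block is obtained by displacing each digit of row $i$ through $ei$ places, and no composition of more than $\sum_i i\,\alpha(r_i)$ positive-degree operations can accomplish this since each operation moves digits rightward without increasing their number. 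Nothing in your proposal replaces this step.

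Your lower-bound architecture is close to the paper's Proposition \ref{May_filt_Mil_1}, but the key lemma you lean on is false as stated. Writing $N(T)=\sum_i i\,\alpha(t_i)$, it is not true that every non-distinguished Milnor matrix in a product gives $N(T)>N(R)+N(S)$: already $Sq^2Sq^1=Sq(3)+Sq(0,1)$ with $N((3))=N((0,1))=2=N((2))+N((1))$, and in your Stage 1 the product $P^{p^{s+(t-1)e}}\cdot P^s_{t-1}$ contains, besides $P^s_t$, the juxtaposition term $P(p^sq^{t-1},0,\ldots,0,p^s)$ which also has $N=t$, so $P^s_t$ is not isolated modulo higher-$N$ terms (you would need the commutator). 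Consequently your descending induction on $N$ does not close: correction terms with $N(T)=N(R)$ sit at the same inductive level, and you cannot conclude they lie in $(\A_q^+)^{N(R)}$ without assuming the statement being proved. The paper's fix is to induct on $|R|=\sum_i r_i$: every non-initial Milnor matrix strictly decreases $|T|$, so the induction hypothesis applies to \emph{all} correction terms, and only the non-strict inequality $N(T)\ge N(R)+N(S)$ (which does hold, via $\alpha(a)\le\alpha(a-q^kc)+\alpha(c)$) is required. The paper also settles the case of a single entry with $\alpha(r)=1$ (in particular $M(P^s_t)\ge t$) by the elementary degree bound $M(\theta)\ge\alpha(d)$ of Proposition \ref{prop_elem_May}, which makes your Stage 1 unnecessary.
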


The numerical function $\alpha$ is defined for $\A_q$ as for $\A_p$ (Definition \ref{def_pin}).   Thus if $a = \sum_{j \ge 0} a_j p^j$ where $0 \le a_j \le p-1$ for all $j$, then $\alpha(a) = \sum_{j \ge 0} a_j$, so that $\alpha(a)$ is the cardinality of the multiset $\pin(a)$ given by the base $p$ digits of $a$.   We note that the element $P_e(R) \in \A'_q \subset \A_p$ has May filtration $M(P_e(R)) = e \cdot M(P(R))$.

We shall establish Proposition \ref{May_filt_Mil_q} by proving inequalities in each direction.

\begin{proposition} \label{May_filt_Mil_1}
For $R = (r_1, r_2, \ldots, r_\ell)$, the Milnor basis element $P(R) \in \A_q$ has May filtration $M(P(R)) \ge \sum_{i=1}^\ell i\, \alpha(r_i)$.
\end{proposition}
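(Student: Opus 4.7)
My plan is to prove that $P(R) \in (\A_q^+)^{w(R)}$, where $w(R) := \sum_i i\,\alpha(r_i)$; the stated bound $M(P(R)) \ge w(R)$ then follows from the definition of the May filtration. The argument has a preliminary step dealing with the ``primitives'' $P^s_t$, after which the general case falls to an induction on $|R| = \sum_i r_i$.

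The engine of both steps is the following weight estimate for Milnor matrices: for every matrix $X$ contributing to $P(R_1) \cdot P(R_2)$ via Milnor's formula (i.e.\ with nonzero multinomial coefficient $b(X)$), one has $w(T(X)) \ge w(R_1) + w(R_2)$. To see this, observe that $b(X) \ne 0$ forces no mod-$p$ carries in any of the diagonal sums $t_n$, so $\alpha(t_n) = \sum_{i+j=n}\alpha(x_{i,j})$ and hence $w(T(X)) = \sum_{i,j}(i+j)\,\alpha(x_{i,j})$. On the other hand, the sub-additivity $\alpha(a+b) \le \alpha(a) + \alpha(b)$ together with the invariance $\alpha(q^j x) = \alpha(x)$ give $w(R_1) \le \sum_{i,j} i\,\alpha(x_{i,j})$ and $w(R_2) \le \sum_{i,j} j\,\alpha(x_{i,j})$, and summing proves the estimate.

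For the primitives $P^s_t$ I would induct on $t$, the case $t = 1$ being trivial since $P^s_1 = P^{p^s} \in \A_q^+$. For $t = 2$, expanding $P^{p^s q} \cdot P^{p^s}$ via Milnor's formula produces $P^s_2$ with coefficient $1$ from the matrix whose single nonzero entry is $x_{1,1} = p^s$, and the remaining summands $P(T)$ all have degree $p^s(q+1)$ with $\alpha(p^s(q+1)) = 2$, so Proposition~\ref{prop_elem_May}(i) places them in $(\A_q^+)^2$. For $t > 2$ an analogous product built from $P^s_{t-1}$ (whose filtration is $\ge t-1$ by induction), combined with the weight estimate to bound the other summands, yields $P^s_t \in (\A_q^+)^t$.

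For the general inductive step, given $R$ pick a digit $p^s \in \pin(r_i)$ for some position $i$ with $r_i > 0$, and let $R'$ be $R$ with $r_i$ replaced by $r_i - p^s$; then $|R'| < |R|$ and $w(R') = w(R) - i$, as no $p$-adic carry is involved. Expanding $P(R') \cdot P^s_i$ by Milnor's formula, the ``diagonal'' matrix $X_0$ with $x_{i',0} = r'_{i'}$ for each $i' \ge 1$ and $x_{0,i} = p^s$ (zero elsewhere) contributes $\binom{r_i}{p^s} P(R)$, which is nonzero mod $p$ by the choice of $p^s$. Every other contributing matrix $X$ has some nonzero $x_{i',j}$ with $j \ge 1$, which forces $|T(X)| < |R|$ (entries in columns $j \ge 1$ contribute $q^j x_{i',j}$ to $|R'|$ but only $x_{i',j}$ to $|T(X)|$); by the inductive hypothesis and the weight estimate, $P(T(X)) \in (\A_q^+)^{w(T(X))} \subseteq (\A_q^+)^{w(R)}$. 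Since $P(R') \cdot P^s_i \in (\A_q^+)^{w(R)}$ (by the primitive case, the inductive hypothesis, and the analogue of Proposition~\ref{prop_elem_May}(ii) for $\A_q$), rearranging isolates $P(R) \in (\A_q^+)^{w(R)}$. The main obstacle of the whole plan is securing the weight estimate cleanly; once it is established, both the primitive and general cases follow by the templates above.
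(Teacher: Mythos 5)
Your overall strategy coincides with the paper's (induction on $|R|$, Milnor products, and subadditivity of $\alpha$ under carry-free addition), but your organization is genuinely different and in one respect cleaner. You isolate a single general lemma --- for every Milnor matrix $X$ with $b(X)\neq 0$ contributing to $P(R_1)P(R_2)$ one has $w(T(X))\ge w(R_1)+w(R_2)$ --- proved uniformly from the no-carry condition on the diagonal sums together with the row and column conditions. The paper instead reproves this inequality ad hoc for each of the special shapes of matrix it encounters. You also peel off one $p$-adic digit $p^s\in\pin(r_i)$ at a time via the product $P(R')\cdot P^s_i$, whose leading coefficient $\binom{r_i}{p^s}$ is a unit by Lucas, whereas the paper splits off the entire first nonzero coordinate $r_1$ and then needs a separate subsidiary induction on $\alpha(r)$ to handle sequences with a single nonzero entry. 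Your single induction on $|R|$ absorbs both cases, and your verification that every non-leading matrix satisfies $|T(X)|<|R|$ is correct (the relevant entries are those with $i'\ge 1$ \emph{and} $j\ge 1$, as your parenthetical computation in fact shows).

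The one step that does not stand as written is the primitive case for $t>2$: you propose to bound the non-leading summands $P(T)$ ``by the weight estimate,'' but that estimate only gives $w(T)\ge t$, and converting this to $M(P(T))\ge t$ requires the proposition for $T$ --- which is not covered by your induction on $t$ (those $T$ are generally not primitives) nor by the induction on $|R|$ (they can have $|T|>p^s$). The repair is immediate and is exactly what you already did for $t=2$: every element of the $q$-atomic degree $p^s(q^t-1)/(q-1)$ has $\alpha$ of its degree equal to $t$, so Proposition~\ref{prop_elem_May}(i) gives $M\ge t$ for \emph{all} summands at once --- indeed it gives $M(P^s_t)\ge t$ directly, which is how the paper dispatches the primitive case in one line with no products at all. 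With that substitution your argument is complete.
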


\begin{proof}
We argue by induction on $|R| = \sum_{i=1}^\ell r_i$.   In the base case $|R|=1$, $R=(0, \ldots, 0,1)$ with $1$ in position $t$, say.   Thus the degree of $P(R)$ in $\A_q$ is $d=(q^t-1)/(q-1) = 1+q + q^2 + \cdots + q^{t-1}$.  Since an element of degree $d$ in $\A_q$ must have May filtration $\ge \alpha(d)$, $M(P(R)) \ge t$.   Thus we assume, as main induction hypothesis, that for $r \ge 2$ the inequality holds for all Milnor basis elements $P(S)$ in $\A_q$ with $|S| < r$, and we consider $P(R)$ with $|R| = r$.

For the inductive step, we first deal with the case where $R$ has only one nonzero term $r$, using a subsidiary induction on $\alpha(r)$.   If $\alpha(r) =1$, then $R=(0, \ldots, 0,p^s)$ with $p^s$ in position $t$, so that $P(R) = P^s_t$.  Thus $P(R)$ has degree $d = p^s(q^t-1)/(q-1)$ and $\alpha(d) = t$, so as before $M(P(R)) \ge t$.   For the inductive step on $\alpha(r)$, let $R=(0, \ldots, 0,r)$ where $\alpha(r) >1$.  Let $r=a+b$, where $a, b >0$ are chosen so that $\pin(r)$ is the disjoint union of $\pin(a)$ and $\pin(b)$.   Then in the Milnor product formula for $P(0, \ldots, 0, a) P(0, \ldots, 0, b)$ (both of length $t$) the initial Milnor matrix gives a nonzero multiple of $P(R)$.   Any other term $P(S)$ in the product arises from a Milnor matrix of the form
$$
X = \begin{array}{c|cc} &0 \cdots 0& b-c \\ \hline 0 & 0 \cdots 0&0  \\ \vdots & \vdots & \vdots  \\  0 & 0 \cdots 0 & 0 \\  a-q^tc & 0 \cdots 0 & c \end{array}\ ,\ {\rm where}\ c>0.
$$
Thus $S = (0, \ldots, 0, (a - q^tc) +  (b-c), 0, \ldots, 0, c)$, where $\pin(a - q^tc)$ and $\pin(b-c)$ are disjoint, and the nonzero terms are in positions $t$ and $2t$.   Then $|S| = r - q^tc < r$, and so by the main induction hypothesis on $|S|$, $M(P(S)) \ge t\alpha((a - q^tc) +  (b-c)) +2t\alpha(c)$.   Since $\alpha(a) \le \alpha(a - q^tc)+\alpha(q^tc) = \alpha(a - q^tc) +\alpha(c)$ and $\alpha(b) \le \alpha(b-c) + \alpha(c)$, $M(P(S)) \ge t(\alpha(a)+ \alpha(b)) = t\alpha(r)$.   By the induction hypothesis on $\alpha(r)$, we also have $M(P(0, \ldots, 0, a)) \ge t\alpha(a)$ and $M(P(0, \ldots, 0, b)) \ge t\alpha(b)$, so $M(P(0, \ldots, 0, a) P(0, \ldots, 0, b)) \ge t\alpha(r)$ also.   Hence the Milnor product formula implies that $M(P(R)) \ge t\alpha(r)$.   This completes the induction on  $\alpha(r)$, and the proof in the case $R =(0, \ldots, 0, r)$.

We may now assume that $R = (0, \ldots, 0, r_1, r_2, \ldots, r_\ell)$, where $r_1>0$ is in position $k$, and where $r_i>0$ for some $i>1$.   Applying the Milnor product formula to $P(0, \ldots, 0, r_2, \ldots, r_\ell) P(0, \ldots, 0, r_1)$, the initial Milnor matrix gives a nonzero multiple of $P(R)$.   Any other term $P(S)$ in the product arises from a Milnor matrix of the form
$$
X = \begin{array}{c|cc} & 0 \cdots 0 & r_1- \sum_{i=2}^\ell c_i \\ \hline 0 & 0 \cdots 0&0  \\ \vdots & \vdots & \vdots  \\  0 & 0 \cdots 0 & 0 \\ r_2-q^kc_2 & 0 \cdots 0 & c_2 \\  r_3-q^kc_3 & 0 \cdots 0 & c_3 \\ \vdots & \vdots & \vdots \\  r_\ell-q^kc_\ell & 0 \cdots 0 &  c_\ell \end{array}
$$
where the nonzero column is column $k$, the nonzero rows are rows $k+1, \ldots, k+\ell-1$, and some $c_i >0$.   Thus $|S| = |R| - q^k\sum_{i=2}^\ell c_i < |R|$, and so by the induction hypothesis on $|S|$
$$
M(P(S)) \ge k \alpha(r_1 -\sum_{i=2}^\ell c_i) +  \sum_{i=2}^\ell (k+i-1)\alpha(r_i - q^kc_i) +  \sum_{i=2}^\ell (2k+i-1)\alpha(c_i),
$$
where we have used the assumption that the coefficient $b(X) \neq 0$ to deal with diagonals in $X$ with two nonzero entries.

We use the inequalities
\begin{eqnarray*}
\alpha(r_1) &\le& \alpha(r_1 - \sum_{i=2}^\ell c_i) + \alpha(\sum_{i=2}^\ell c_i),\\
\alpha(r_i) &\le& \alpha(r_i - q^kc_i) + \alpha(c_i),\ 2 \le i \le \ell.
\end{eqnarray*}

Weighting and adding these inequalities, we obtain
\begin{eqnarray*}
k \alpha(r_1)  &+& \sum_{i=2}^\ell (k+i-1)\alpha(r_i)  \le k \alpha(r_1 - \sum_{i=2}^\ell c_i) +
k\alpha(\sum_{i=2}^\ell c_i)  \\ &+&  \sum_{i=2}^\ell (k+i-1) \alpha(r_i - q^kc_i) + \sum_{i=2}^\ell (k+i-1) \alpha(c_i) \\
&\le& M(P(S)).
\end{eqnarray*}
We conclude that if the term $P(S)$ appears in the expansion of
$$
P(0, \ldots, 0, r_2, \ldots, r_\ell)\, P(0, \ldots, 0,r_1)
$$
then $M(P(S)) \ge \sum_{i=1}^\ell (k+i-1)\alpha(r_i)$.   By the induction hypothesis, the product itself has May filtration $\ge \sum_{i=2}^\ell (k+i-1)\alpha(r_i) + k\alpha(r_1) = \sum_{i=1}^\ell (k+i-1)\alpha(r_i) $, and so the same is true for the initial term $P(R)$.   This completes the induction.
\end{proof}

To complete the proof of Theorem \ref{prop_May}, we use the standard action of Milnor basis elements on polynomials to prove the reverse inequality.

\begin{proposition} \label{May_filt_Mil_2}
For $R = (r_1, r_2, \ldots, r_\ell)$, the Milnor basis element $P(R) \in \A_q$ has May filtration $M(P(R)) \le \sum_{i=1}^\ell i\, \alpha(r_i)$.
\end{proposition}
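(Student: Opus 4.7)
The plan is to establish $M(P(R)) \le m$, where $m := \sum_i i\,\alpha(r_i)$, by exhibiting a polynomial $f \in \FF_p[x_1, \ldots, x_N]$ and a distinguished monomial $m^*$ so that $m^*$ appears in $P(R) \cdot f$ with coefficient $1$ but cannot appear in $\theta \cdot f$ for any $\theta \in (\A_q^+)^{m+1}$. This will force $P(R) \notin (\A_q^+)^{m+1}$, yielding the bound.

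First I would make the single-variable action explicit. Using the coaction $x \mapsto \sum_{i \ge 0} \xi_{ie} \otimes x^{q^i}$ dual to the Milnor basis of $\A_q$, one derives
$$P(S) \cdot x^d = \binom{d}{d - |S|, s_1, s_2, \ldots}\, x^{d + \sum_i s_i(q^i - 1)}.$$
Specializing to $d = p^u$, Lucas's theorem forces this multinomial to vanish unless $S$ is the zero sequence or has a single entry $p^u$ in some position $j \ge 1$; in the latter case $P(S) \cdot x^{p^u} = x^{p^{u+ej}}$. In particular, the $\A_q$-action preserves the $\FF_p$-span of $p$-power monomials $\prod_i x_i^{p^{u_i}}$, on which it can be tracked by ``shifts'' of the exponent-indices.

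Next, enumerate the atoms of $R$ by writing each $r_k > 0$ as the sum of its base-$p$ digits and collecting the results into pairs $(k_i, t_i)$ for $i = 1, \ldots, N := \sum_k \alpha(r_k)$, so that $r_k = \sum_{i : k_i = k} p^{t_i}$. Set $f = \prod_{i=1}^N x_i^{p^{t_i}}$. The iterated Cartan formula expresses $P(R) \cdot f$ as a sum over decompositions $R = R^{(1)} + \cdots + R^{(N)}$ in which each nonzero $R^{(i)}$ has a single entry $p^{t_i}$ in some position $j_i \ge 1$. The ``identity'' decomposition $j_i = k_i$ contributes the monomial $m^* := \prod_i x_i^{p^{t_i + ek_i}}$ with coefficient $1$, and is the unique decomposition producing $m^*$ (since the $x_i$ are distinguishable and the exponent $p^{t_i + e j_i}$ determines $j_i$). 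The total shift of $m^*$ relative to $f$ is $\sum_i(u_i - t_i) = e\sum_i k_i = e\sum_k k\,\alpha(r_k) = em$.

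The technical heart is a shift lemma: for any $\psi \in \A_q^+$ and any $p$-power monomial $g = \prod_i x_i^{p^{u_i}}$, every monomial appearing in $\psi \cdot g$ has total shift at least $e$ over $g$. In the Cartan expansion of $P(S) \cdot g$ with $|S| \ge 1$, the constraint that the $R^{(i)}$'s assigned to position $k$ have $p^{u_i}$-values summing to $s_k$, together with the combinatorial fact that any representation of $s_k$ as a sum of powers of $p$ uses at least $\alpha(s_k)$ summands, produces a shift of $e\sum_k k\,\alpha(s_k) \ge e$. Iterating across the $m+1$ factors of any element of $(\A_q^+)^{m+1}$ accumulates total shift at least $e(m+1) > em$, so $m^*$ cannot occur in $\theta \cdot f$ for such $\theta$. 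The main obstacle is this combinatorial underpinning --- that $\alpha(s_k)$ is the minimum number of summands in any expression of $s_k$ as a sum of powers of $p$, proved by iteratively combining $p$ equal summands into the next $p$-power --- together with the bookkeeping that ensures the identity contribution to $m^*$ is unique and therefore survives in the Milnor-Cartan sum.
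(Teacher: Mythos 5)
Your proof is correct and follows essentially the same route as the paper's: both evaluate $P(R)$ on a product of variables chosen to separate the Milnor coordinates, isolate a distinguished output monomial appearing with coefficient $1$, and rule out membership in $(\A_q^+)^{m+1}$ by a rightward base-$p$ digit-shift invariant that increases by at least $e$ under each positive-degree factor. Your refinement of allotting one variable to each base-$p$ digit of each $r_k$ (so every exponent is a pure $p$-power and Lucas's theorem trivializes the single-variable action) makes the shift lemma and the uniqueness of the distinguished term a little cleaner than the paper's block bookkeeping, but the underlying argument is the same.
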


\begin{proof}
We first consider two special cases.  For the first case, let $q = p$ and $P(R) = P^s_t$, so that $P^s_t (x^{p^s}) = x^{p^{s+t}} \in \FF_p[x]$.   There is only one digit $1$ in the (reversed) base $p$ expansion of $p^s$, which is moved from position $s$ to position $s+t$.   This move cannot be broken down into more than $t$ steps by the action of Steenrod operations in $\A_p$ of positive degree, and so $P^s_t$ cannot be expressed as an element of $(\A_p^+)^{t+1}$.   Hence $M(P^s_t) \le t$.   For a general $q =p^e$, the element $P^s_t \in \A_q$ corresponds via $\rho$ to  $P^s_{et} \in \A'_q$, and the same argument holds with $t$ replaced by $et$.

For the second case, consider the equation $P^r(x^r) = x^{pr}$.   Each of the $\alpha(r)$ nonzero digits of the base $p$ expansion of $r$ is moved one place to the right to give the base $p$ expansion of $pr$.  This cannot be done by a sequence of more than $\alpha(r)$ moves, and so $M(P^r) \le \alpha(r)$.    For a general $q =p^e$, we have $P^s_t (x^{q^s}) = x^{q^{s+t}}$, where $P^s_t = P(0, \ldots, 0, p^s)$ with $p^s$ is in position $t$.   Replacing the equation $P^r(x^r) = x^{pr}$ by $P^r(x^r) = x^{qr}$, the effect on the base $p$ expansion is to move all $\alpha(r)$ base $p$ digits of $r$ through $e$ places to the right to give the base $p$ expansion of $qr$.

For the general case, we combine these two examples, using the action of $\A_q$ on polynomials over $\FF_p$ in variables $x_1, x_2, \ldots$, and representing monomials by arrays called `blocks' as in \cite{book}, where the rows of the block are formed by the reverse binary expansions of the exponents.   Let $R = (r_1, r_2, \ldots, r_\ell)$.   Then $P(R) \in \A_q$ acts on a product $x_1x_2 \cdots x_{|R|}$ to give the sum of all the monomials obtained by raising $r_i$ of the variables $x_j$ to $x_j^{q^i}$ for $1 \le i \le \ell$.   By specializing the first $r_1$ variables to a new variable $y_1$, the next $r_2$ variables to a new variable $y_2$, and so on, we find that $P(R)(y_1^{r_1}y_2^{r_2} \cdots y_\ell^{r_\ell}) = y_1^{qr_1}y_2^{q^2 r_2} \cdots y_\ell^{q^\ell r_\ell} +$ other terms, since no other specialization of the variables leads to the same monomial.   This monomial $y_1^{qr_1}y_2^{q^2 r_2} \cdots y_\ell^{q^\ell r_\ell}$ has the same $\alpha$-count as the original monomial $y_1^{r_1}y_2^{r_2} \cdots y_\ell^{r_\ell}$, and its base $p$ block is obtained by moving each of the nonzero digits in row $i$ a total of $ei$ places to the right, for $1 \le i \le \ell$.   This cannot be achieved by a sequence of more than $\sum_{i=1}^\ell i\, \alpha(r_i)$ Steenrod operations of positive degree.   The result follows.
\end{proof}

\section{The graded algebra $E^0(\A_q)$} \label{sec_graded alg}

In this section we consider the graded algebra $E^0(\A_q)$ associated to the May filtration of $\A_q$.   Given a filtered algebra $A$, We use the same notation ambiguously for elements of $A$ and for the corresponding elements of the associated graded algebra $E^0(A)$.

Using Theorem \ref{prop_May}, the structure of $E^0(\A_q)$  can be described as follows.   The Milnor basis elements $P(R)$, $R = (r_1, r_2, \ldots)$ form a $\FF_q$-basis of $E^0(\A_q)$, with $P(R)$ in grading $M(P(R)) = \sum_{i=1}^\ell i\, \alpha(r_i)$.   The product in $E^0(\A_q)$ is given by $P(R)P(S) = \sum_T P(T)$, where this sum is obtained by deleting all terms $P(T)$ in the corresponding product in $\A_q$ such that $M(P(T)) > M(P(R)) + M(P(S))$.   For example, $Sq(4,2)Sq(1,2) = Sq(1,3,1) + Sq(4,2,1)$ in $\A_2$, but $Sq(4,2)Sq(1,2) = Sq(4,2,1)$ in $E^0(\A_2)$.   By the `degree' and `grading' of $\theta \in E^0(\A_q)$ we mean the degree of $\theta$ as an element of $\A_q$ and its filtration $M(\theta)$.

\begin{proposition} \label{prop_power_comm}
Let $\A_q(n-2)$ be the subalgebra of $\A_q$ generated by $P^{p^i}$, $0 \le i < (n-1)e$.   The graded algebra $E^0(\A_q(n-2))$ associated to the May filtration of $\A_q(n-2)$ can be defined by generators and relations as follows.

1.  There is one generator $P^s_t = P(0, \ldots, 0, p^s)$ in each {\bf $q$-atomic} degree
$$
a = p^s(q^t-1)/(q-1)
$$
with $s \ge 0$, $t \ge 1$ and $(s/e)+t < n$.  There are $en(n-1)/2$ such degrees $a$.   We denote this generator by $P[a]$, and if $a$ and $b$ are $q$-atomic we denote the commutator $P[a]P[b] - P[b]P[a]$ by $[P[a], P[b]]$.

2. The defining relations are the power relations $P[a]^p =0$ for all $a$, and the commutator relations: for $a>b$,
$$
[P[a], P[b]] =\begin{cases} P[a+b], & \text{if $a+b$ is $q$-atomic and is not a power of $p$},\\0, & \text{otherwise}. \end{cases}
$$
Thus a $\FF_q$-basis for $E^0(\A_q(n-2))$ is given by monomials with exponents $\le p-1$ in the generators $P[a]$, taken in a fixed but arbitrary order.   The dimension of the $\FF_q$-algebra $E^0(\A_q(n-2))$ is $p^{en(n-1)/2} =q^{n(n-1)/2}$.

\end{proposition}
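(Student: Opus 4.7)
The plan is to verify that the stated generators lie in $E^0(\A_q(n-2))$ and satisfy the stated relations, then to use a dimension count to show the abstractly presented algebra maps isomorphically onto $E^0(\A_q(n-2))$. The Milnor basis of $\A_q(n-2)$ (and hence of $E^0(\A_q(n-2))$) consists of the $P(R)$ with $R = (r_1, \ldots, r_{n-1})$ and $0 \le r_j < q^{n-j}$, a standard fact verifiable from the action on $\FF_p[x_1, \ldots, x_n]$. Writing each $r_j$ in base $p$, the constraint $r_j < q^{n-j} = p^{e(n-j)}$ restricts its digits to positions $s < e(n-j)$, matching exactly the range $s/e + t < n$ of the claimed generators $P^s_t$ (with $j = t$). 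Summing gives $N := \sum_{t=1}^{n-1} e(n-t) = en(n-1)/2$ generators and total dimension $\prod_{j=1}^{n-1} q^{n-j} = q^{n(n-1)/2}$. That each $P^s_t$ lies in $E^0(\A_q(n-2))$ is inductive: $P^s_1 = P^{p^s}$ is a generator for $0 \le s < (n-1)e$, and for $t \ge 1$ the identity $P^s_{t+1} = [P^{s+et}_1, P^s_t]$ in $E^0$ (a special case of the commutator relation proved below) extends the list throughout the allowed range.

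To compute $[P^s_t, P^{s'}_{t'}]$ in $E^0$, I would apply Milnor's product formula to $P^s_t \cdot P^{s'}_{t'}$. The admissible matrices have nonzero entries only at $(0, t')$, $(t, 0)$ and $(t, t')$; writing $c = x_{t,t'}$ and using Theorem \ref{prop_May}, the terms of filtration exactly $t + t'$ come only from the $c = 0$ matrix, giving $P(T_0)$ with $r_t = p^s$ and $r_{t'} = p^{s'}$ (coefficient $1$), and, when $s = s' + et'$, from the $c = p^{s'}$ matrix, giving $P^{s'}_{t+t'}$ (coefficient $1$). The symmetric $P(T_0)$ contributions cancel in the commutator, so $[P^s_t, P^{s'}_{t'}] = P^{s'}_{t+t'}$ when $s = s' + et'$ and $0$ otherwise. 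Translating to the $q$-atomic notation, under $a > b$ the condition $s = s' + et'$ is equivalent to $a + b = p^{s'}(q^{t+t'}-1)/(q-1)$ being $q$-atomic; since $(q^v-1)/(q-1) \equiv 1 \pmod{p}$ for $v = t + t' \ge 2$, this sum is never a power of $p$, which reproduces the stated branch structure.

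For the power relation $(P^s_t)^p = 0$ I would iterate the same calculation. An induction on $k$ shows that the leading part (filtration $kt$) of $(P^s_t)^k$ in $E^0$ is $k!\, P(R_k)$ with $r_t = k p^s$: multiplying by $P^s_t$ via the $c = 0$ matrix contributes a factor $\binom{(k+1)p^s}{kp^s,\, p^s} \equiv k + 1 \pmod{p}$ by Lucas, while every matrix with $c \ge 1$ produces terms of filtration strictly greater than $(k+1)t$. At $k = p$ the prefactor becomes $p! \equiv 0 \pmod{p}$, so $(P^s_t)^p$ has filtration strictly greater than $pt$ and vanishes in the relevant graded piece of $E^0(\A_q(n-2))$.

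Finally I would fix a total order on the generators $P[a]$ and invoke a diamond-lemma argument: repeated application of the commutator relation rewrites every monomial as a linear combination of ordered monomials, and the power relation then bounds all exponents by $p - 1$. Termination holds because each commutator swap either decreases the inversion count (when the commutator term is $0$) or strictly shortens the monomial (when the commutator replaces two factors by the single factor $P[a+b]$). This bounds the dimension of the abstractly presented algebra by $p^N = q^{n(n-1)/2}$, matching $\dim E^0(\A_q(n-2))$, so the evident surjection is an isomorphism and the ordered monomials form a $\FF_q$-basis. The main obstacle I anticipate is the Milnor-matrix filtration analysis in the commutator step: verifying that among the three-entry matrices only $c = 0$ and (conditionally) $c = p^{s'}$ lie at the minimal filtration $t + t'$, and matching the arithmetic condition $s = s' + et'$ with the $q$-atomicity of $a + b$.
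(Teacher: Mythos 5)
Your proposal is correct and follows essentially the same route as the paper: both verify the power and commutator relations by applying the Milnor product formula, parametrizing the relevant Milnor matrices by the single entry $c=x_{t,t'}$, and using the May filtration formula of Proposition \ref{May_filt_Mil_q} to show that only the $c=0$ matrix and (when $s=s'+et'$) the $c=p^{s'}$ matrix survive in $E^0$, with the mod $p$ vanishing of $\binom{pp^s}{(p-1)p^s}$ killing the $p$th power. The only difference is that you make explicit the dimension count for $\A_q(n-2)$ and the rewriting argument showing the relations are defining, which the paper handles informally in the discussion surrounding the proposition.
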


The $q$-atomic number $a = p^s(q^t-1)/(q-1)$ has reversed base $p$ expansion $0 \ldots 0\ 1\ 0 \ldots 0\ 1 \ldots 0 \ldots 0\ 1$, where there are $t$ equally spaced digits $1$ in positions $s, s+e, \ldots, s+(t-1)e$.   By Proposition \ref{May_filt_Mil_q}, $P[a]$ has grading $t$ in $E^0(\A_q)$.

\begin{example} \label{exa_E0_1} {\rm
In the case $q=p$, $n=4$,  $E^0(\A_p(2))$ has $6$ generators $P[1] = P(1)$, $P[p]= P(p)$, $P[p+1] = P(0,1)$, $P[p^2] =P(p^2)$, $P[p^2+p]= P(0,p)$ and $P[p^2+p+1] =P(0,0,1)$.   The nonzero commutators are $[P[p],P[1]] =P[p+1]$, $[P[p^2],P[p]] =P[p^2+p]$ and $[P[p^2+p],P[1]] = [P[p^2],P[p+1]] = P[p^2+p+1]$.
}
\end{example}
\begin{example} \label{exa_E0_2} {\rm
In the case $q=4$, $n=3$, $E^0(\A_4(1))$ has $6$ generators $P[a]$, $a =1,2,4,8,5,10$ with relations $P[a]^2 =0$ and nonzero commutators $[P[4],P[1]] = P[5]$ and $[P[8],P[2]] = P[10]$.   The elements $P[a] \in \A_4$ correspond respectively to $Sq(0,1)$, $Sq(0,2)$, $Sq(0,4)$, $Sq(0,8)$, $Sq(0,0,0,1)$ and $Sq(0,0,0,2)$ in $\A'_4 \subset \A_2$.
}
\end{example}

These power and commutator relations do not generally hold in $\A_q$, but only in the graded algebra $E^0(\A_q)$.   For example, in $\A_4$ we have $[P[8],P[1]] = P[5]P[4]$, $[P[4],P[2]] =P[5]P[1]$ and $[P[8],P[4]] = P[10]P[2]$, where in each case the right hand side has filtration $3$, and $P[4]P[4] = P(3,1)$ and $P[8]P[8] = P(6,2)$, where in each case the right hand side has filtration $4$.

The particular power relations $(P^s_1)^p = (P^{p^s})^p = 0$ in $E^0(\A_q(n-2))$ can be proved by observing that the expansion of $(P^{p^s})^p$ in the admissible basis does not involve $P^{p^{s+1}}$.   This is because $P^{p^{s+1}}$ is indecomposable in $\A_q$ (Proposition \ref{PseriesofAq}(i)), and since $p^{s+1} = p^s + \cdots + p^s$ ($p$ terms) is the unique minimal decomposition of $p^{s+1}$ as the sum of more than one power of $p$, all other admissible monomials in degree $p^{s+1}$ have May filtration $\ge p+1$.   To prove the general power relation $(P^s_t)^p=0$ and the commutator relations, we use the Milnor product formula.

\begin{proof}[Proof of Proposition \ref{prop_power_comm}]
Given a $q$-atomic number $a = p^s(q^t-1)/(q-1)$, let $P[a] = P(0, \ldots, 0, p^s)$ be the corresponding element of $\A_q(n-2)$, where $p^s$ is in position $t$.   Similarly let $b = p^u(q^v-1)/(q-1)$ so that $P[b] = P(0, \ldots, 0, p^u)$ with $p^u$ in position $v$.   Then we wish to prove that in $E^0(\A_q(n-2))$ we have $P[a]^p =0$ for all $a$ and $[P[a],P[b]] = P[a+b]$ when $a \ge b$ and $s = u +v$ or $u =s+t$, and $[P[a],P[b]] = 0$ otherwise.   Equivalently, $P[a]^p$ has May filtration $> pt$, $[P[a],P[b]] - P[a+b]$ has May filtration $> t+v$ when $s = u +v$ or $u = s+t$, and otherwise $[P[a],P[b]]$ has May filtration $> t+v$.

The calculation of $P[a]P[b]$ and $P[b]P[a]$ by the Milnor product formula produces Milnor matrices of the form
$$
\begin{array}{c|cccc} &0 &\cdots & 0 & p^u -c \\ \hline 0 & 0& \cdots & 0 & 0 \\ \vdots & \vdots && \vdots & \vdots \\ 0  & 0& \cdots & 0 & 0 \\ p^s-p^v c & 0 & \cdots &0 & c \end{array}, \quad
\begin{array}{c|cccc} &0 &\cdots & 0 & p^s -c \\ \hline 0 & 0& \cdots & 0 & 0 \\ \vdots & \vdots && \vdots & \vdots \\ 0  & 0& \cdots & 0 & 0 \\ p^u-p^t c & 0 & \cdots &0 & c \end{array}
$$
We may assume that $t \ge v$, and that if $t=v$ then $s \ge u$.

We first consider the case $c>0$.   Since $c$ appears in  position $t+v$, by Proposition \ref{May_filt_Mil_1} the corresponding term in $P[a]P[b]$ has May filtration $\ge t+v$, and is $> t+v$ unless the array satisfies $\alpha(p^s-p^vc) +\alpha(c) = \alpha(p^s) =1$ and $\alpha(p^u-c) + \alpha(c) = \alpha(p^u) =1$.   Thus $p^u-c =0$ and $p^s-p^vc =0$, i.e. $s = u +v$, $c =p^u$.   The corresponding element is $P[a+b] = P(0, \ldots, 0, p^u)$, where $p^u$ is in position $t+v$.   The case $u = s+t$ arises similarly by considering $P[b]P[a]$.

Next we consider the initial term $c=0$, with $a \neq b$.  The $c=0$ terms in $P[a]P[b]$ and $P[b]P[a]$ are equal (to $P(0, \ldots, 0, p^u, 0, \ldots, 0, p^s)$ if $t>v$, and to $P(0, \ldots, 0, p^u +p^s)$ if $t=v$, $s>u$), and so they cancel in $[P[a],P[b]]$.

Finally  we consider the initial term $c=0$, with $a=b$, so that $t=v$ and $s=u$.   The $c=0$ term is $2P(0, \ldots, 0, 2p^s)$ since the binomial coefficient $\binom{2p^s}{p^s} = 2$ mod $p$.   This gives the result $P[a]^2 = 0$ in the case $p=2$.   For $p>2$ we need to consider products $P(0, \ldots, 0, ip^s)P(0, \ldots, 0, jp^s)$ for $0 \le i,j \le p-1$.   These produce similar Milnor matrices to those shown above.   Again the case $c>0$ gives terms of higher May filtration.   The initial Milnor matrix $c=0$ gives a multiple $\binom{(i+j)p^s}{ip^s} = \binom{i+j}{i}$, which is nonzero mod $p$ when $i+j <p$ and is $0$ mod $p$ when $i+j=p$.   It follows that $P[a]^p =0$ in $E^0(\A_q)$.
\end{proof}

Proposition \ref{prop_power_comm} allows us to determine the dimension of the $k$th filtration quotient $(\A_q^+)^k/(\A_q^+)^{k+1}$ as a vector space over $\FF_q$ for each degree $d \ge 0$.  Since $E^0(\A_q)$ has a basis given by taking the products of the generators $P[a]$ in a fixed but arbitrary order, this dimension is the number of multisets $A = \{a_1, a_2, \ldots, a_\ell\}$ of $q$-atomic numbers with multiplicities $\le p-1$, sum $|A| = \sum_{i=1}^\ell a_i= d$ and filtration $\sum_{i=1}^\ell M(P[a_i]) = k$.   Thus we have the following result.

\begin{proposition} \label{Pseries_E^0}
The Poincar\'e series of the graded algebra $E^0(\A_p(n-2))$ is
$$
\Pi(E^0(\A_p(n-2)),\tau) = \left(\frac{1- \tau^p}{1-\tau}\right)^{n-1} \left(\frac{1- \tau^{2p}}{1-\tau^2}\right)^{n-2} \cdots \left(\frac{1- \tau^{(n-1)p}}{1-\tau^{n-1}}\right).
$$
For $q=p^e$, the Poincar\'e series of $E^0(\A_q(n-2))$ is the $e^{\it th}$ power of this series.
\qed
\end{proposition}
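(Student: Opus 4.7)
The plan is to read off the Poincar\'e series directly from the ordered-monomial basis supplied by Proposition \ref{prop_power_comm}, with $\tau$ tracking the May filtration grading on $E^0(\A_q(n-2))$.

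First I would fix an arbitrary linear order on the $q$-atomic generators $P[a]$, where $a = p^s(q^t-1)/(q-1)$ runs over pairs $(s,t)$ with $s \ge 0$, $t \ge 1$ and $(s/e)+t<n$. Proposition \ref{prop_power_comm} supplies an $\FF_q$-basis of $E^0(\A_q(n-2))$ consisting of the monomials $\prod_{a} P[a]^{\epsilon_a}$ with $0 \le \epsilon_a \le p-1$, and Proposition \ref{May_filt_Mil_q} places each $P[a]$ in May filtration exactly $t$. Since $E^0(\A_q(n-2))$ is a graded algebra and each $P[a]$ is homogeneous of grading $t$, a basis monomial $\prod_a P[a]^{\epsilon_a}$ has grading $\sum_{a} t(a)\epsilon_a$, where $t(a)$ denotes the filtration of $P[a]$.

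Next I would observe that, because the exponents $\epsilon_a$ vary independently, the Poincar\'e series factors as a product over generators, each generator of filtration $t$ contributing $1+\tau^t+\tau^{2t}+\cdots+\tau^{(p-1)t} = (1-\tau^{pt})/(1-\tau^t)$. For $q=p$ the admissible pairs satisfy $s+t\le n-1$, giving exactly $n-t$ generators of filtration $t$ for each $t \in \{1,\ldots,n-1\}$; multiplying over $t$ yields the displayed formula. For $q=p^e$ the condition $(s/e)+t<n$ enlarges the range of $s$ for each $t$ to $\{0,1,\ldots,e(n-t)-1\}$, producing $e(n-t)$ generators of filtration $t$, so the Poincar\'e series becomes precisely the $e$-th power of the $q=p$ series.

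No real obstacle presents itself, since all structural inputs have already been established. The one point worth flagging is that $\tau$ here tracks the May filtration grading rather than the internal degree inherited from $\A_q$; this is forced by the shape of the formula, in which the exponents appearing in the denominators $(1-\tau^t)$ correspond to the filtrations $t$ of the generators, not to their $q$-atomic degrees $a$.
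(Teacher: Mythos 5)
Your proposal is correct and follows essentially the same route as the paper: the paper likewise reads off the series from the monomial basis of Proposition \ref{prop_power_comm}, with each generator $P[a]=P^s_t$ of filtration $t$ contributing a factor $(1-\tau^{pt})/(1-\tau^t)$ and the count of generators of filtration $t$ being $n-t$ for $q=p$ and $e(n-t)$ in general. Your explicit remark that $\tau$ tracks the May filtration grading rather than the internal degree is consistent with the paper's usage (compare Example \ref{exa_Pseries}).
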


\begin{example} \label{exa_Pseries} {\rm
The basis for $E^0(\A_2(1))$ given by Proposition \ref{prop_power_comm} is shown below, together with the grading, using the order $1, 3, 2$ on $2$-atomic numbers.   Since $Sq[1] = Sq(1)$, $Sq[2] = Sq(2)$ and $Sq[3] =Sq(0,1)$, this basis is the Milnor basis.   The Poincar\'e series is $(1+\tau)^2(1+\tau^2) =1 + 2\tau + 2\tau^2 +2\tau^3 +\tau^4$.
$$
\begin{array}{|c|c|c|c|c|c|} \hline k&0&1&2&3&4 \\ \hline  &1 & Sq[1] & Sq[3] & Sq[1]Sq[3] & Sq[1]Sq[3]Sq[2] \\ && Sq[2] & Sq[1]Sq[2] & Sq[3]Sq[2] &\\ \hline \end{array}
$$
The corresponding series for $E^0(\A_4(1))$ is $(1+\tau)^4(1+\tau^2)^2 =1 + 4\tau + 8\tau^2 +12\tau^3 +14\tau^4 + 12\tau^5 + 8\tau^6 + 4\tau^7 + \tau^8$.   It can be obtained by applying Proposition \ref{May_filt_Mil_q} to the Milnor basis elements $P(r_1, r_2)$ with $0 \le r_1 \le 15$ and $0 \le r_2 \le 3$.
}
\end{example}

For $M \ge 0$, there is a natural bijection between monomials $P[A]$ in the generators $P[a]$ with exponents $\le p-1$, degree $d$ and grading $M$ in $E^0(\A_q)$, and Milnor basis elements $P(R) = P(r_1. r_2, \ldots)$ with degree $d$ and May filtration $M$ in $\A_q$.   This has been described, by K.~G.~Monks \cite{Monks} for $p=2$ and by D.~Yu.~Emelyanov and Th.~Yu.~Popelensky \cite{Emelyanov-Popelensky17} for $p>2$, as follows: if $P[a] = P^s_t$, so that $a= p^s(q^t-1)/(q-1)$, then $P[a]$ appears with exponent $k$ in $P[A]$ if and only if $p^s$ has coefficient $k$ in the base $p$ expansion of $r_t$.   Hence $d = \sum_j k_ja_j = \sum_i r_i(q^i-1)/(q-1)$, since if $a_j = p^{s_j}(q^{t_j}-1)/(q-1)$ then $r_i = \sum_{t_j=i} k_jp^{s_j}$ and $\alpha(r_i) = \sum_{t_j=i} k_j$, and $M = \sum_j k_j t_j = \sum_i i\, \alpha(r_i)$.

\begin{example} \label{exa_Monks_corr_1} {\rm
The first few $3$-atomic numbers are shown below.   The monomial $P[1]P[4]^2P[3]^2$ corresponds to the Milnor basis element $P(7,2)$, with $d = 15$ and $M = 7$.
$$
\begin{array}{ccccc} \vdots \\27 & \vdots \\9& 36 & \vdots \\ 3 & 12 & 39 & \vdots\\ 1&4&13&40& \cdots
\end{array}
$$
}
\end{example}

\begin{example} \label{exa_Monks_corr_2} {\rm
The first few $4$-atomic numbers are shown below.   The monomial $P[1]P[10]P[8]$ corresponds to the Milnor basis element $P(9,2)$, with $d = 19$ and $M = 4$.
$$
\begin{array}{ccccc} \vdots \\8 & \vdots\\4& 20& \vdots  \\ 2 & 10 & 42 &\vdots  \\ 1&5&21&85&\cdots
\end{array}
$$
}
\end{example}

A sum of elements of filtration $M$ in a filtered algebra can have filtration $>M$, as the next example shows.

\begin{example} \label{exa_deg_12} {\rm
The following table shows the grading of the Milnor basis in $E^0(\A_2)$ for elements of degree $12$.
$$
\begin{array}{|c|c|c|c|c|c|} \hline {\rm grading}  & 2 & 3 & 4 & 5 & 6 \\ \hline {\rm Milnor\ basis} & Sq(0,4)&\qquad& Sq(6,2) &Sq(5,0,1) & Sq(2,1,1) \\ & Sq(12) &\qquad& Sq(9,1) & & Sq(3,3) \\ \hline
\end{array}
$$
Thus for $q=2$ and degree $d=12$, $(\A_2^+)^6/(\A_2^+)^7$ is spanned by the elements
\begin{eqnarray*}
Sq^1Sq^2Sq^4Sq^2Sq^1Sq^2 = Sq(3,3) &=& Sq^9Sq^3,\\
Sq^2Sq^4Sq^2Sq^1Sq^2Sq^1 = Sq(2,1,1)+ Sq(3,3) &=& Sq^8Sq^3Sq^1 + Sq^9Sq^2Sq^1.
\end{eqnarray*}
But $Sq^8Sq^3Sq^1$ and $Sq^9Sq^2Sq^1$ have filtration $4$, since their Milnor basis expansion involves $Sq(9,1)$ in each case.   It follows that there is no `admissible basis' of the filtration quotients of $\A_2$.
}
\end{example}

If a similar situation were to occur for a sum of Milnor basis elements of the same May filtration in $\A_q$, then, in view of the bijection described above, some filtration quotient of $\A_q$ would have dimension greater than that given by Proposition \ref{Pseries_E^0}.  Thus, by working down from the highest filtration in a given degree, we obtain the following result \cite{May}.

\begin{proposition} \label{Mil_gr}
For any element $\theta \in \A_q$, the May filtration of $\theta$ is the minimum of the May filtrations of the terms in the expansion of $\theta$ in the Milnor basis. \qed
\end{proposition}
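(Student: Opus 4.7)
The plan is to compare two counts in each degree $d$ and each May-filtration level $M$: the dimension of $(\A_q^+)^M \cap \A_q^d$ on one hand, and the number of Milnor basis elements of degree $d$ with May filtration $\geq M$ on the other. Once these agree, the Milnor elements of filtration $\geq M$ will automatically form a basis of $(\A_q^+)^M \cap \A_q^d$, and the proposition will follow from uniqueness of the Milnor expansion.

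First I would invoke Proposition \ref{prop_power_comm} (applied to $\A_q(n-2)$ for $n$ large enough to contain the degree $d$ of interest) together with the degree- and grading-preserving bijection between monomials $P[A]$ and Milnor basis elements $P(R)$ recorded immediately before this statement. That identification gives $\dim_{\FF_q} E^0(\A_q)^{d,M}$ equal to the number of $R$ with $\deg P(R) = d$ and $M(P(R)) = M$. Telescoping in $M$ yields
\[
\dim_{\FF_q} (\A_q^+)^M \cap \A_q^d \;=\; \#\{R : \deg P(R) = d,\ M(P(R)) \geq M\}.
\]

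Next, Proposition \ref{May_filt_Mil_q} ensures that every Milnor basis element $P(R)$ with $M(P(R)) \geq M$ lies in $(\A_q^+)^M \cap \A_q^d$, and as a subset of the Milnor basis these elements are linearly independent. The dimension match above shows that they occur in exactly the right number, so they form an $\FF_q$-basis of $(\A_q^+)^M \cap \A_q^d$. Given $\theta \in \A_q^d$ with $M(\theta) = M_*$, the inclusion $\theta \in (\A_q^+)^{M_*}$ combined with uniqueness of the Milnor expansion forces every term in that expansion to have May filtration $\geq M_*$. At least one term must realize filtration exactly $M_*$, else $\theta \in (\A_q^+)^{M_* + 1}$, contradicting $M(\theta) = M_*$. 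Hence the minimum of $M(P(R))$ over terms in the Milnor expansion of $\theta$ equals $M(\theta)$, as required.

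The main obstacle is the dimension match in the first step; once it is in place, the rest is a short linear-algebra argument. There is no circularity, since the proof of Proposition \ref{prop_power_comm} relies only on the lower bound of Proposition \ref{May_filt_Mil_1} and the Milnor product formula, not on the present proposition.
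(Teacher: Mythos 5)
Your proof is correct and takes essentially the same route as the paper: the paper's (very terse) justification is exactly this dimension count, comparing the filtration quotients given by Proposition \ref{prop_power_comm} and the degree- and grading-preserving bijection with the number of Milnor basis elements in each degree and filtration, ``working down from the highest filtration.'' Your telescoped formulation merely spells out the same argument in more detail, including the appeal to Proposition \ref{May_filt_Mil_1} for the inclusion $P(R)\in(\A_q^+)^M$ and the correct observation that no circularity arises.
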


\section{The graded algebra $E^0(\FF_p \U(n))$} \label{sec_Quillen}

Given a group $G$ and a field $F$, the augmentation ideal $A$ of the group algebra $FG$ is the kernel of the homomorphism $FG \rightarrow F$ which maps $g \mapsto 1$ for all $g \in G$.   We filter $FG$ by the powers $A^i$ of $A$, and define the associated graded algebra $E^0(FG) = \oplus_{i \ge 0} A^i/A^{i+1}$, where $A^0 = FG$ and the product of $a \in A^i$ and $b \in A^j$ is defined by $\overline{a}\,\overline{b} = \overline{ab} \in A^{i+j}$, where $\overline{a}$ and $\overline{b}$ are the cosets $a + A^{i+1}$ and $b + A^{j+1}$.   In this section, we consider $E^0(\FF_p \U(n))$ where $p$ is prime and $\U(n) \subset GL(n, \FF_p)$ is the subgroup of upper unitriangular matrices, and we prove Theorem \ref{th_Priddy} by finding generators and defining relations for $E^0(\FF_p \U(n))$ which correspond to those of Proposition \ref{prop_power_comm} for $E^0(\A_p(n-2))$.

\begin{example} \label{exa_3} {\rm
In the case $p=2$, the group $\U(3)$ is dihedral of order $8$.   It is generated by $e_1, e_2, e_3$ where
$$
e_1 = \begin{pmatrix} 1&1&0\\0&1&0\\0&0&1 \end{pmatrix}, \quad e_2 = \begin{pmatrix} 1&0&0\\0&1&1\\0&0&1 \end{pmatrix}, \quad e_3 = \begin{pmatrix} 1&0&1\\0&1&0\\0&0&1 \end{pmatrix},
$$
with defining relations $e_1^2 =e_2^2 =e_3^2 =I$, $(e_1, e_3) =(e_2, e_3) =I$ and $(e_1,e_2) =e_3$, where $I$ is the identity matrix and $(g_1,g_2) = g_1^{-1}g_2^{-1}g_1g_2$ is the commutator.   The augmentation ideal $A$ of $\F\U(3)$ is given by formal sums of an even number of matrices, and is generated by $f_1 = 1+e_1$, $f_2 =1+e_2$ and $f_3 =1+e_3$, where $1 =I$.   These satisfy the relations $f_1^2 =f_2^2 =f_3^2 =0$, $[f_1, f_3] =[f_2, f_3] =0$ and $[f_1,f_2] = f_3 + f_1f_3 + f_2f_3 + f_1f_2f_3$, where $[a_1, a_2] = a_1a_2 + a_2a_1$  is the commutator.   The last relation shows that $f_3 \in A^2$ and that $[f_1, f_2] =f_3$ in the graded algebra $E^0(\F\U(3))$.   This algebra has dimension $8$ over $\F$, with basis $\{1, f_1, f_2, f_3, f_1f_2, f_1f_3, f_2f_3, f_1f_2f_3\}$ and defining relations $f_1^2 =f_2^2 =f_3^2 =0$, $[f_1, f_3] =[f_2, f_3] =0$ and $[f_1,f_2] = f_3$.   The grading is shown by the diagram
$$
\begin{array}{|c|c|c|c|c|} \hline 0&1&2&3&4 \\ \hline  1 & f_1 & f_3 & f_1f_3 & f_1f_2f_3 \\ & f_2 & f_1f_2 & f_2f_3 &\\ \hline \end{array}\ .
$$
In this case, the corresponding basis for $E^0(\A_2(1))$ obtained using $f_1 \leftrightarrow Sq^1$ and $f_2 \leftrightarrow Sq^2$ is the Milnor basis
$$
\begin{array}{|c|c|c|c|c|c|} \hline 0&1&2&3&4 \\ \hline  1 & Sq(1) & Sq(0,1) & Sq(1,1) & Sq(3,1) \\ & Sq(2) & Sq(3) & Sq(2,1) &\\ \hline \end{array}\ .
$$
}
\end{example}

A theorem of Quillen \cite{Quillen} describes the structure of $E^0(FG)$ as the universal enveloping algebra of a Lie algebra $L(G)$ associated to $G$.   If $F$ has prime characteristic $p$, then $L(G)$ is a `restricted' Lie algebra, as it has an additional structure map called a `$p$th power map'.   The universal enveloping algebra is also taken in the restricted sense.

The construction of the Lie algebra $L(G)$ is based on the dimension series of the group $G$.   The $k$th dimension subgroup $D_k(G)$ is the normal subgroup of $G$ which consists of elements $g \in G$ such that $g-1 \in A^k$.   In particular, $D_1(G) =G$.   The quotients $D_k(G)/D_{k+1}(G)$ are Abelian, and, for the purpose of constructing $L(G)$ as their direct sum, these quotients are written additively.   Thus $L(G) = \oplus_{k \ge 1} D_k(G)/D_{k+1}(G)$ is a graded Abelian group, and we define a Lie product on $L(G)$ by $[\overline{g_1}, \overline{g_2}] = \overline{(g_1,g_2)}$, where $(g_1,g_2) = g_1^{-1}g_2^{-1}g_1g_2$ is the commutator in $G$.   Various identities for commutators in $G$ then translate into bilinearity, anti-commutativity and the Jacobi identity in $L(G)$ \cite{Passi}.

We embed $L(G)$ additively into $E^0(FG)$ by the map $\theta: \overline{g} \mapsto g - 1 + A^{k+1}$, where $g$ has grading $k$ in $L(G)$.   As it is an associative algebra, $E^0(FG)$ also has a Lie product defined by $[\overline{g_1}, \overline{g_2}] = \overline{g_1}\, \overline{g_2} - \overline{g_2}\, \overline{g_1}$, and the map $\theta$ is a homomorphism of Lie rings.   Finally we take coefficients in $F$, to get a map $\theta:L(G) \otimes_{\mathbb Z} F \rightarrow E^0(FG)$ of Lie algebras over $F$.   The $p$th power map on $E^0(FG)$ is defined by ${\overline{g}}^{[p]} = {\overline{g}}^p$.

\begin{example} \label{exa_L3} {\rm
In Example \ref{exa_3}, $D_2 =\{1, e_3\}$ is the centre of $\U(3)$, and $D_3 = \{1\}$.   Regarded as elements of $L(\U(3))$, the cosets $1+D_2$, $e_1+D_2$, $e_2 +D_2$, $e_1e_2 + D_2$ are written as $0$, $\overline{e_1}$, $\overline{e_2}$ and $\overline{e_1} + \overline{e_2}$ respectively, and the cosets $1+D_3$, $e_3+D_3$ as $0$, $\overline{e_3}$.   The commutator relations in Example \ref{exa_3} show that the Lie product in $L(\U(3))$ is given by $[\overline{e_1}, \overline{e_3}] = [\overline{e_2},\overline{e_3}] =0$ and $[\overline{e_1}, \overline{e_2}] = \overline{e_3}$.   The elements $\overline{e_1}$, $\overline{e_2}$ of $L(\U(3))$ have grading $1$, while $\overline{e_3}$ has grading $2$.

The embedding $\theta: L(\U(3)) \rightarrow E^0(\F\U(3))$  maps $\overline{e_i}$ to $f_i$ for $i =1,2,3$.   The relations in $E^0(\F\U(3))$ of Example \ref{exa_3} show that $\theta$ is a map of Lie algebras, and that $\overline{g} \mapsto {\overline{g}}^{[2]}$ is the zero map.   The (ungraded) Lie algebra $L(\U(3))$ is isomorphic to the Lie algebra of nilpotent upper triangular $3 \times 3$ matrices over $\F$ \cite{Mitchell}.
}
\end{example}

Given a restricted Lie algebra $L$ over a field $F$ of characteristic $p$, the restricted universal enveloping algebra $U(L)$ is the quotient of the tensor algebra of $L$ by the relations $[a,b] = a \otimes b - b\otimes a$ and $a^{[p]} = a \otimes a \otimes \cdots \otimes a$ ($p$ factors), for all $a, b \in L$.   Then $U(L)$ is an associative algebra, and has a Lie product defined by commutators.  The natural map $L \rightarrow U(L)$ is then an embedding of restricted Lie algebras.   By the Poincar\'e-Birkhoff-Witt theorem, if $x_1, x_2, \ldots, x_m$ is an ordered $F$-basis for $L$, then a $F$-basis for $U(L)$ is given by the monomials $x_{i_1}^{\alpha_1}x_{i_2}^{\alpha_2} \cdots x_{i_r}^{\alpha_r}$ with $i_1 < i_2 < \cdots < i_r$ and $0 \le \alpha_i < p$.

\begin{theorem} {\rm (Quillen, \cite{Quillen})} \label{th_Quillen}
The map $\theta:L(G) \otimes_{\mathbb Z} F \rightarrow E^0(FG)$ extends to an isomorphism
$$
\theta: U(L(G) \otimes_{\mathbb Z} F) \rightarrow E^0(FG)
$$
of graded algebras over $F$.   If $F$ has prime characteristic $p$, then $U(L(G) \otimes_{\mathbb Z} F)$ is taken in the restricted sense.
\end{theorem}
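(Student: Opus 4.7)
The plan is to extend $\theta$ multiplicatively, verify that the defining relations of the restricted enveloping algebra are satisfied in $E^0(FG)$, and then show that the resulting algebra map is bijective by a dimension count.

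First I would extend $\theta$ from $L(G) \otimes_{\mathbb Z} F$ to the tensor algebra $T(L(G) \otimes_{\mathbb Z} F)$ multiplicatively, and then descend to the restricted universal enveloping algebra by checking the two families of defining relations. For the Lie bracket relation, given $g_1 \in D_j(G)$ and $g_2 \in D_k(G)$, the identity $(g_1, g_2) - 1 = g_1^{-1} g_2^{-1}\bigl((g_1 - 1)(g_2 - 1) - (g_2 - 1)(g_1 - 1)\bigr)$ in $FG$, together with $g_i^{-1} - 1 \in A$, yields $(g_1, g_2) - 1 \equiv (g_1 - 1)(g_2 - 1) - (g_2 - 1)(g_1 - 1) \pmod{A^{j+k+1}}$, which is precisely the relation $\theta([\overline{g_1}, \overline{g_2}]) = [\theta(\overline{g_1}), \theta(\overline{g_2})]$ in $E^0(FG)$. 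For the restricted $p$-th power (when $F$ has characteristic $p$), the map on $L(G)$ is defined by $\overline{g}^{[p]} = \overline{g^p}$, and the binomial expansion $(g - 1)^p = \sum_{i=0}^p \binom{p}{i}g^i(-1)^{p-i}$ collapses modulo $p$ to $g^p - 1$; hence $\theta(\overline{g}^{[p]}) = \theta(\overline{g})^p$. These verifications show that $\theta$ descends to a graded algebra homomorphism $\theta: U(L(G) \otimes_{\mathbb Z} F) \to E^0(FG)$.

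For surjectivity, I would argue that since $A$ is spanned as an $F$-module by elements $g - 1$ with $g \in G$, the quotient $A/A^2$ is spanned by the image of $\theta$ in grading $1$. Because $A^k/A^{k+1}$ is spanned by $k$-fold products of cosets in $A/A^2$, the whole of $E^0(FG)$ is generated as an $F$-algebra by $\theta(L(G) \otimes_{\mathbb Z} F)$, which gives surjectivity.

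For injectivity, which I expect to be the main obstacle, I would use the Poincar\'e--Birkhoff--Witt theorem for restricted Lie algebras together with the dimension theorem of Jennings. In the case of interest, where $G$ is a finite $p$-group (as for $\U(n)$), an ordered $F$-basis $x_1, \ldots, x_m$ of $L(G) \otimes_{\mathbb Z} F$ gives a basis of $U(L(G) \otimes_{\mathbb Z} F)$ by ordered monomials with exponents in $\{0, 1, \ldots, p-1\}$, so that $\dim U(L(G) \otimes_{\mathbb Z} F) = p^m$ where $m = \sum_{k \ge 1} \dim_F (D_k(G)/D_{k+1}(G)) \otimes_{\mathbb Z} F$. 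Jennings' theorem identifies $D_k(G)$ with the terms of the descending series generated by commutators and $p$-th powers, and shows that $|G| = p^m$; since $\dim_F E^0(FG) = \dim_F FG = |G|$, the dimensions of source and target agree, and together with surjectivity this forces $\theta$ to be an isomorphism. The main technical content lies in the identification of the dimension subgroups $D_k(G)$, for which I would appeal to standard group-algebra references rather than reproving it.
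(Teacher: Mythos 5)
The paper does not prove this theorem at all: it is quoted from Quillen's paper \cite{Quillen} and used as a black box, so there is no internal proof to compare against. Your proposal is essentially correct as a proof for finite $p$-groups over a field of characteristic $p$ (which covers $\U(n)$ over $\FF_p$, the only case the paper uses), but it is a genuinely different route from Quillen's. Quillen shows that $E^0(FG)$ is a cocommutative, primitively generated graded Hopf algebra and invokes Milnor--Moore structure theory to identify it with the restricted enveloping algebra of its primitives, which he then computes to be $L(G)\otimes_{\mathbb Z}F$; the Jennings dimension formula comes out as a corollary rather than going in as an input. Your route --- build the algebra map by verifying the bracket and $p$-th power relations, get surjectivity from generation of $A/A^2$ by the $g-1$, and close with PBW plus a dimension count --- is the classical deduction from Jennings' theorem. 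Two remarks. First, you are invoking more than you need: for a finite $p$-group in characteristic $p$ the augmentation ideal is nilpotent, so the dimension series terminates, each $D_k/D_{k+1}$ is elementary abelian (it embeds in $A^k/A^{k+1}$ via $g\mapsto g-1$), and $|G|=\prod_k[D_k:D_{k+1}]=p^{\sum_k d_k}$ by telescoping; combined with restricted PBW this already forces your surjection between spaces of equal finite dimension $p^m$ to be bijective, with no appeal to the hard identification of $D_k(G)$ with the Zassenhaus--Jennings series. Second, the theorem as stated in the paper is for an arbitrary group $G$ and arbitrary field $F$, where your global dimension count is unavailable (and in characteristic $0$ the unrestricted version is asserted); there one must compare graded pieces degree by degree, and the equality of graded dimensions is precisely the nontrivial content that Quillen's Hopf-algebraic argument supplies. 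So your proof is sound for the application at hand but does not establish the statement in the generality in which it is quoted.
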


When $G = \U(n)$, the group of upper unitriangular $n \times n$ matrices over $\FF_p$, the dimension subgroups are the terms of its lower central series: $D_k(\U(n))$ consists of all matrices $U = (u_{i,j})$ with $k-1$ diagonals of zeros above the main diagonal, so that $u_{i,j} = 0$ for $1 \le j-i < k$.   Thus $D_k(\U(n))/D_{k+1}(\U(n))$ is an elementary Abelian group of order $p^{n-k}$.   To construct the Lie algebra $L(\U(n))$, we write the group $D_k(\U(n))/D_{k+1}(\U(n))$ additively, so as to regard it as vector space of dimension $n-k$ over $\FF_p$.

For $1 \le i< j \le n$, let $E_{i,j} \in \U(n)$ be the elementary $n \times n$ matrix over $\FF_p$ obtained  by changing the $(i,j)$th entry of the identity matrix $I$ from $0$ to $1$.
The commutator $(E_{i,j}, E_{k,\ell}) = E_{i,j}^{-1} E_{k,\ell}^{-1}E_{i,j} E_{k,\ell}$ is $E_{i,\ell}$ if $j=k$, $E_{k ,j}$ if $i =\ell$, and $I$ otherwise.

We write $E_{i,j}= e_a$, where $a = (p^{j-1}- p^{i-1})/(p-1)$ is a $p$-atomic number.   The conditions $1 \le i < j \le n$ on $i$ and $j$ then correspond to the conditions $1 \le a \le (p^{n-1}-1)/(p-1)$, and for $p$-atomic numbers $a$ and $b = (p^{k-1}- p^{\ell-1})/(p-1)$ the commutator $(e_a, e_b) = e_c$ where $c = a+b = (p^{\ell-1}-p^{i-1})/(p-1)$ if $j=k$, $(e_a, e_b) = e_{c'}$ where $c' = a+b = (p^{j-1}-p^{k-1})/(p-1)$ if $i=\ell$, and  $(e_a, e_b) = I$ otherwise.   Thus, in terms of the (reverse) base $p$  expansions $0 \cdots 0\ 1 \cdots 1$ of $a$ and $b$, $(e_a, e_b) = e_{a+b}$ if $a>b$ and the blocks of $1$s in the expansions of $a$ and $b$ abut, so as to form a single block of $1$s under base $p$ addition, as shown below,
$$
\begin{array}{ccc} 0 \cdots 0 & 1 \cdots 1 \\ 0 \cdots 0 & 0 \cdots 0 & 1 \cdots 1 \\ \hline  0 \cdots 0 & 1 \cdots 1  & 1 \cdots 1 \end{array}
$$
 and otherwise $e_a$ and $e_b$ commute.  Thus $(e_a, e_b) = e_{a+b}$ if $a>b$ and $a+b$ is $p$-atomic and is not a power of $p$, and otherwise $(e_a, e_b) = I$.

The quotient group $D_k(\U(n))/D_{k+1}(\U(n))$ is  generated by the cosets $\overline{e_a} = e_a + D_{k+1}(\U(n))$ of the elements $e_a$ with an entry $1$ on the $k$th superdiagonal $j-i =k$.   As a vector space over $\FF_p$, $L(\U(n))$ is the direct sum of these quotients, and has dimension $n(n-1)/2$.   Thus $L(\U(n))$ is the graded elementary Abelian $p$-group generated by the elements $\overline{e_a}$ for $a \le (p^{n-1}-1)/(p-1)$, and $\overline{e_a}$ has grading $k = j-i$.  Thus the elements of $L(\U(n))$ are formal sums of elements $\overline{e_a}$ corresponding to the matrices $e_a \in \U(n)$.

The Lie product in $L(\U(n))$ is defined by $[\overline{e_a}, \overline{e_b}] = \overline{(e_a,e_b)}$.   It follows from the above discussion of commutators in $\U(n)$ that $[\overline{e_a}, \overline{e_b}] =  \overline{e_{a+b}}$ if $a>b$, $a+b$ is $p$-atomic and is not a power of $p$, and otherwise $[\overline{e_a}, \overline{e_b}] = 0$.   Since $\overline{e_{a+b}}$ has grading $k+\ell$ if $\overline{e_a}$ has grading $k$ and $\overline{e_b}$ has grading $\ell$, this product makes $L(\U(n))$ into a graded Lie algebra.

For each $e_a \in \U(n)$ as above, we define $f_a = 1-e_a$ in $\FF_p\U(n)$.   If $e_a$ has a $1$ on the $k$th superdiagonal, then $f_a \in A^k$, and so $f_a +A^{k+1}$ is a well-defined element of the graded algebra $E^0(\FF_p\U(n))$.    Since the elements $\overline{e_a}$ form a $\FF_p$-basis for $L(\U(n))$, we can define a  map $\theta:L(\U(n)) \rightarrow E^0(\FF_p\U(n))$ of graded algebras by $\theta(\overline{e_a}) = f_a + A^{k+1}$, where $\overline{e_a}$ has grading $k =j-i$ in $L(\U(n))$ and $a = (p^{j-1}- p^{i-1})/(p-1)$.

We shall show that $\theta$ is an embedding of Lie algebras, where the Lie product of $x,y \in E^0(\FF_p\U(n))$ is defined by $[x,y] = xy-yx$.    Thus we wish to prove
\begin{eqnarray*}
\theta([\overline{e_a}, \overline{e_b}]) &=& \theta(\overline{e_a})\theta(\overline{e_b}) - \theta(\overline{e_b})\theta(\overline{e_a}) \\
&=& (f_a + A^{k+1})(f_b + A^{\ell+1})-  (f_b + A^{\ell+1})(f_a + A^{k+1})
\end{eqnarray*}
where $\overline{e_a}$ has grading $k$ and $\overline{e_b}$ has grading $\ell$.   Since products such as $f_a \cdot A^{\ell+1}$ are in $A^{k+\ell+1}$, this simplifies to
$$
\theta([\overline{e_a}, \overline{e_b}]) =  f_a f_b - f_b f_a +A^{k+\ell+1} =  e_a e_b -  e_b e_a + A^{k+\ell+1}.
$$
Since $[\overline{e_a}, \overline{e_b}] = \overline{e_{a+b}}$ if $a+b$ is $p$-atomic, and $[\overline{e_a}, \overline{e_b}] = 0$ otherwise, $\theta([\overline{e_a}, \overline{e_b}]) = 1- e_{a+b} + A^{k+\ell+1}$ if $a+b$ is $p$-atomic, and $\theta([\overline{e_a}, \overline{e_b}]) = 0 + A^{k+\ell+1}$ otherwise.   The result holds in the second case since $e_a$ and $e_b$ commute in $\U(n)$.   In the first case, $e_a^{-1} e_b^{-1} e_a e_b = e_{a+b}$, so $e_b e_a = e_a e_b e_{a+b}$.   Hence $e_a e_b - e_b e_a = e_a e_b(1 - e_{a+b})$.   Since $1-e_{a+b} \in A^{k+\ell}$ and $e_a e_b \in 1+ A$, $e_a e_b - e_b e_a +A^{k+\ell+1}= 1+e_{a+b} +A^{k+\ell+1}$.   Thus the result holds in the first case.  It follows that $\theta$ is a Lie algebra homomorphism.

We next apply Theorem \ref{th_Quillen}.   The restricted universal enveloping algebra $U(L(\U(n)))$ is an associative algebra over $\FF_p$ of dimension $p^{n(n-1)/2}$.   It has a basis given by products of the elements $\overline{e_a}^k$, where $0 \le k \le p-1$ and $a = (p^{j-1} - p^{i-1})/(p-1)$ for $1 \le i < j \le n$, the $p$-atomic numbers $a$ being taken in a fixed but arbitrary order.   The $p$th power map $x \mapsto x^{[p]}$ is trivial since $(e_a+1)^p =0$ in $\FF_p(\U(n))$.   By Quillen's theorem, the map $\theta: L(\U(n)) \rightarrow E^0(\FF_p\U(n))$ extends to an isomorphism $\theta: U(L(\U(n))) \rightarrow E^0(\FF_p\U(n))$ of graded algebras over $\FF_p$.   On basis elements we have
$$
\theta(\overline{e_{a_1}} \otimes \overline{e_{a_2}} \otimes \cdots \otimes \overline{e_{a_r}}) = f_{a_1} f_{a_2}  \cdots f_{a_r} + A^{k+1}
$$
where $a_1, a_2, \ldots, a_r$ are taken in the preferred order,  and $k$ is the sum of the gradings of the elements $\overline{e_{a_i}}$.

In the associative algebra $U(L(\U(n)))$, the Lie bracket $[x,y] =xy - yx$, and so the relations $\overline{e_a}^p =0$ and $\overline{e_a} \, \overline{e_b} - \overline{e_a} \, \overline{e_b} = [\overline{e_a}, \overline{e_b}] = \overline{e_{a+b}}$ or $0$ can be regarded as power-commutator relations defining $U(L(\U(n)))$ as an algebra.   Thus Quillen's theorem provides us with a corresponding definition of the algebra $E^0(\FF_p\U(n))$ by generators and relations.   We write $f_a = 1- e_a \in \FF_p(\U(n))$ where $a$ is $p$-atomic, and translate these relations as $f_a^p =0$ and $f_af_b - f_bf_a = f_{a+b}$ if $a>b$ and $a+b$ is $p$-atomic, and $f_af_b = f_bf_a$ otherwise.

\begin{proof}[Proof of Theorem \ref{th_Priddy}]
The graded algebras $E^0(\FF_p\U(n))$ and $E^0(\A_p(n-2))$ have the same dimension $p^{n(n-1)/2}$ as vector spaces over $\FF_p$.   The discussion above shows that the elements $f_a$ corresponding to the $n(n-1)/2$ $p$-atomic numbers $a$ such that $1 \le a \le (p^{n-1}-1)/(p-1)$ generate $E^0(\FF_p\U(n))$, and satisfy relations which correspond to those of Proposition \ref{prop_power_comm} for the generators  $P^s_t$ of $E^0(\A_p(n-2))$.   Hence the map $E^0(\FF_p\U(n)) \longrightarrow E^0(\A_p(n-2))$ which sends $f_a$ to $P^s_t$, where $a = p^s(p^t-1)/(p-1)$, is an isomorphism of graded algebras.

Further, $E^0(\FF_p\U(n))$ and $E^0(\A_p(n-2))$ are isomorphic as Hopf algebras.  The generators $P[a]$ of $E^0(\A_p(n-2))$ and $f_a$ of $E^0(\FF_p\U(n))$ are coproduct primitives.   Since $P[a] = P^s_t = P(0, \ldots, 0, p^s)$,
$
\phi(P(0, \ldots, 0, k)) = \sum_{i+j=k} P(0, \ldots, 0, i) \otimes P(0, \ldots, 0, j).
$
With the nonzero elements in position $t$, the May filtration of $P(0, \ldots, 0, k)$ is $t\alpha(k)$, and that of $P(0, \ldots, 0, i) \otimes P(0, \ldots, 0, j)$ is $t(\alpha(i) + \alpha(j))$, so in $E^0(\A_q)$ the sum is over $i, j$ such that $i+j=k$ and $\alpha(i) + \alpha(j) =\alpha(k)$.   Since $\alpha(k) =1$ when $k=p^s$, the only terms which survive are given by $i=0$, $j=k$ and $i=k$, $j=0$.

The coproduct in $\FF_p\U(n)$ is given by $\phi(g) = g \otimes g$ for all $g \in \U(n)$.   Hence $\phi(f_a) = \phi(1-e_a) = \phi(1) -\phi(e_a) = 1 \otimes 1 - e_a \otimes e_a = 1 \otimes 1 - (1 -f_a) \otimes (1 -f_a) = f_a \otimes 1 +1 \otimes f_a - f_a \otimes f_a$ for the coproduct in $\FF_p\U(n)$.   Hence in $E^0(\FF_p\U(n))$ we have $\phi(f_a) =   f_a \otimes 1 +1 \otimes f_a$.  It follows that the isomorphism $f_a \leftrightarrow P[a]$ preserves coproducts.   The antipodes are also preserved: since $P[a]$ and $f_a$ are primitive, $\chi(P[a]) = -P[a]$ and $\chi(f_a) = -f_a$.
\end{proof}

\begin{remark} {\rm
By Definition \ref{def_P_e(R)}, for $q =p^e$ the subalgebra $\A'_q(n-2)$ of $\A'_q$ is contained in the subalgebra $\A_p(ne-2)$ of $\A_p$.   The isomorphism $E^0(\FF_p\U(n)) \longrightarrow E^0(\A_p(n-2))$ of Theorem \ref{th_Priddy} maps $E^0(\A'_q(n-2))$ to the subalgebra of $E^0(\FF_p\U(n))$ generated by matrices $U = (u_{i,j})$ in $\U(n)$ with non-zero entries only on every $e$th superdiagonal, so that $u_{i,j} =0$ if $e$ does not divide $j-i$.   Using Proposition \ref{Aq_iso}, we have a corresponding description of $E^0(\A_q(n-2))$.
}
\end{remark}

\section{$P^s_t$ bases for $E^0(\A_q)$} \label{sec_Pst}

Let $p$ be a prime and let $q = p^e$ be a power of $p$.   Following Wood \cite{Wood95}, we define the Y- and Z- orders on $p$- and $q$-atomic numbers as follows.

\begin{definition} \label{def_YZ_order} {\rm
The Y-order on $p$-atomic numbers $p^s(p^t-1)/(p-1)$ is the left lexicographic order on pairs $(s,t)$, while the Z-order is the left lexicographic order on pairs $(s+t,s)$, where $s \ge 0$ and $t \ge 1$.   More generally, the Y- and Z- orders on $q$-atomic numbers $p^s(q^t-1)/(q-1)$ are the left lexicographic orders on the triples $(s', t, s'')$ and $(s'+t, s', s'')$ respectively, where $s =s'e+s''$, $0 \le s'' < e$.
}
\end{definition}

\begin{example} \label{exa_YZ_1} {\rm
Example \ref{exa_Monks_corr_1} shows the first few $3$-atomic numbers.   The Y-order $1, 4, 13, 40, \ldots, 3, 12, 39, \ldots, 9, 36, \ldots, 27, \ldots$ takes rows left to right and upwards.   The Z-order $1, 4, 3, 13, 12, 9, 40, 39, 36, 27, \ldots$ takes diagonals right to left and upwards.
}
\end{example}

\begin{example} \label{exa_YZ_2} {\rm
Example \ref{exa_Monks_corr_2} shows the first few $4$-atomic numbers.   Since $e=2$, we apply the previous recipes to rows taken in pairs.   The Y-order is thus $1, 2, 5, 10, 21, 42, 85, \ldots, 4, 8, 20, 40, \ldots, 16, 32, \ldots$, while the Z-order is $1, 2, 5, 10,$ $4, 8, 21, 42, 20, 40, 16, 32, 85,\ldots$.
}
\end{example}

The $P^s_t$ bases of $\A_p$ defined by Monks \cite{Monks} and Emelyanov-Popelensky \cite{Emelyanov-Popelensky15} are obtained by fixing an arbitrary order on the set of $p$-atomic numbers,  and then taking products of the elements  $P^s_t$ in weakly increasing order, with each such element being repeated up to $p-1$ times.   The bijection described in Section \ref{sec_graded alg} gives a triangular relation between these products and the Milnor basis.    This relationship is sharpened in Proposition \ref{prop_YZ_Pst}.For the Y- and Z-$P^s_t$ bases, which are defined using the Y- and Z- orders).

Given such a basis for $\A_p$, the basis elements which are products of elements $P^s_t$ such that $e$ divides $t$ give a basis for the subalgebra $\A'_q$, where $q =p^e$.   The corresponding elements of $\A_q$ give a  $P^s_t$-basis for $\A_q$, whose elements are products of the elements $P[a] = P^s_t = P(0, \ldots, 0, p^s)$, with one such element in each $q$-atomic degree $a = p^s(q^t-1)/(q-1)$, each being repeated up to $p-1$ times.

\begin{example} {\rm
Let $p=2$, so that $P^s_t = Sq(0, \ldots, 0, 2^s)$ has degree $a=2^s(2^t-1)$.   We denote this element alternatively by $Sq[a]$, and use the Z-order.   The table below augments that of Example \ref{exa_deg_12} by showing the Z-$P^s_t$ elements corresponding to the Milnor basis, abbreviating a product $Sq[a_1]Sq[a_2] \cdots Sq[a_r]$ as $Sq[a_1, a_2, \ldots, a_r]$, with the $a_i$ in increasing Z-order $1,3,2,7,6,4,15,14,12,8, \ldots$.

$$
\begin{array}{|c|c|c|c|c|} \hline \I^2/\I^3 & \I^3/\I^4 & \I^4/\I^5 & \I^5/\I^6 & \I^6/\I^7 \\ \hline Sq(0,4)&& Sq(6,2) &Sq(5,0,1) & Sq(2,1,1) \\ Sq(12) && Sq(9,1) & & Sq(3,3) \\ \hline
Sq[12] && Sq[2,6,4] & Sq[1,7,4] & Sq[3,2,7] \\ Sq[4,8] && Sq[1,3,8] && Sq[1,3,2,6] \\ \hline
\end{array}
$$
}
\end{example}

In this example, the Y-$P^s_t$ basis is obtained from the Z-$P^s_t$ basis by replacing $Sq[2]Sq[7]$ by $Sq[7]Sq[2]$.   Since $Sq(2)Sq(0,0,1) = Sq(2,0,1) = Sq(0,0,1)Sq(2)$, these bases coincide in degree $12$.   (They differ in degree $18$, as $Sq(4)Sq(0,0,2) \neq Sq(0,0,2)Sq(4)$, but the `error term' $Sq(3,0,0,1)$ has higher May filtration.)    The transition matrix from either $P^s_t$ basis to the Milnor basis in degree $12$ is shown below.   Note that the diagonal submatrices corresponding to filtrations $2,4,5$ and $6$ are identity matrices.   We show that this situation holds more generally.

{\footnotesize
$$
\begin{array}{|c|cc|cc|c|cc|} \hline & Sq(0,4) & Sq(12) & Sq(6,2) & Sq(9,1) & Sq(5,0,1)& Sq(2,1,1) & Sq(3,3)   \\ &2&2&4&4&5&6&6 \\ \hline Sq[12] &1& 0&0&0& 0&0& 0 \\ Sq[4,8]  && 1&1&0& 0&0&0\\ \hline Sq[2,6,4] &&& 1& 0&0&1& 1 \\ Sq[1,3,8]  &&&& 1&0&0& 0\\ \hline Sq[1,7,4] &&&&& 1 &0& 0\\ \hline Sq[3,2,7] &&&&&& 1& 0 \\ Sq[1,3,2,6] &&&&&&&  1\\ \hline \end{array}
$$
}

\begin{proposition} \label{prop_YZ_Pst}
The Y- and Z-$P^s_t$ bases of $\A_q$ coincide with the Milnor basis up to elements of higher May filtration, so that all three bases give the same basis of $E^0(\A_q)$.
\end{proposition}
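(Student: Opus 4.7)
The proof proceeds by induction on the length $r$ of the monomial $P[A] = P[a_1] P[a_2] \cdots P[a_r]$, where $a_1 \le a_2 \le \cdots \le a_r$ in Y-order (or Z-order) and each $q$-atomic number occurs with multiplicity at most $p-1$. Writing $P[a_j] = P^{s_j}_{t_j}$, the inductive hypothesis on $A' = (a_2,\ldots,a_r)$ gives $P[A'] = P(R') + \Theta$ with $M(\Theta) > M(P(R'))$, where $R'$ is the Milnor sequence assigned to $A'$ via the bijection of Section~\ref{sec_graded alg}. By Proposition~\ref{prop_elem_May}(ii), $P^{s_1}_{t_1}\,\Theta$ has May filtration strictly greater than $t_1 + M(P(R')) = M(P(R))$, so the task reduces to analyzing $P^{s_1}_{t_1}\cdot P(R')$. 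Milnor's product formula expresses this as $\sum_X b(X)\,P(T(X))$ over Milnor matrices $X$ with row $t_1$ summing (base $q$) to $p^{s_1}$ and column $i\ge 1$ summing to $r'_i$. The initial matrix $X^{\mathrm{init}}$, in which the parameters $c_i := x_{t_1,i}$ all vanish for $i\ge 1$, contributes $P(R)$ with coefficient $\binom{r'_{t_1}+p^{s_1}}{p^{s_1}}$, nonzero mod $p$ by the multiplicity constraint, of May filtration exactly $t_1+M(P(R'))$.

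The heart of the argument is to show that every non-initial matrix $X$ satisfies $M(P(T(X))) > t_1+M(P(R'))$ strictly. For the Y-order the hypothesis $s_j\ge s_1$ for $j\ge 2$ forces every component $r'_n$ of $R'$ to have all its base-$p$ digits at positions $\ge s_1$, hence to be divisible by $p^{s_1}$. The row-sum constraint $\sum_i q^i c_i \le p^{s_1}$ then forces $c_i = 0$ whenever $ei > s_1$, and for any $i$ with $c_i > 0$ the subtraction $p^{s_1} - q^i c_i$ triggers a borrow cascade across positions $ei,\ldots,s_1-1$, injecting $(s_1 - ei)(p-1)$ extra $p$-digits into $\alpha(t_{t_1})$. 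Balancing this gain against the possible $\alpha$-losses from the shifted entries on diagonals $i$ and $t_1+i$ by the standard inequalities $\alpha(a-b)\ge \alpha(a)-\alpha(b)$ and $\alpha(a+b)\le \alpha(a)+\alpha(b)$, and using $\sigma_i \ge s_1$ for the lowest nonzero digit position of each nonzero $r'_i$, a direct calculation in the single-parameter case $c_i = 1$ yields $\Delta M \ge (p-1)[i\sigma_i + t_1(s_1-ei)] \ge (p-1)\,e\,i^2 > 0$. The general case reduces to this by iterating on one $c_i$ at a time.

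The Z-order case is treated by the same scheme, with the weaker hypothesis $s_j+t_j \ge s_1+t_1$ (and $s_j \ge s_1$ in case of equality) replacing the Y-order hypothesis. The effect is that $r'_n$ has its base-$p$ digits only at positions $\ge s_1+t_1-n$, which equals $s_1$ for $n=t_1$ but can drop below $s_1$ for $n>t_1$; a parallel weighted $\alpha$-count argument---with $\sigma_i \ge s_1+t_1-i$ and a separate subcase when $i > t_1$---again yields $\Delta M > 0$. The main obstacle is the combinatorial bookkeeping when several $c_i$ are simultaneously nonzero, since the $\alpha$-changes on different diagonals can partially cancel. I expect to surmount this by the iteration-on-one-$c_i$ strategy indicated above, together with the observation that the dominant positive contribution at diagonal $t_1$ scales with $\sum_i \alpha(c_i)$ while the possible losses elsewhere grow only additively in the same quantity, leaving a strictly positive residue.
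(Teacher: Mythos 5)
Your overall strategy coincides with the paper's: induct on the number of $P^s_t$ factors, peel off the leftmost one, and use the Milnor product formula to show that the initial matrix contributes $P(R)$ with nonzero coefficient while every other matrix has strictly higher May filtration, the ordering hypothesis entering through the positions of the base-$p$ digits of the entries of $R'$. Your single-parameter Y-order computation at $e=1$ is a correct fleshing-out of what the paper leaves implicit. However, there are two genuine gaps. The first is the multi-parameter case, which you explicitly defer: the "iterate on one $c_i$ at a time" plan is not obviously valid, since the intermediate configurations no longer satisfy the row-sum constraint $\sum_i q^i c_i = p^{s_1}$ and, as you concede, the $\alpha$-changes on different diagonals can interact. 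The paper disposes of this case at a stroke by the device already used in the proof of Proposition~\ref{May_filt_Mil_1}: a Milnor matrix contributes only when its coefficient $b(X)$ is nonzero, which forces the base-$p$ additions along each diagonal to be carry-free, so that $\alpha(t_n) = \alpha(r'_n - c_n) + \alpha(c_{n-t_1})$ exactly. Summing with weight $n$ and using $\alpha(r'_n - c_n) \ge \alpha(r'_n) - \alpha(c_n)$ together with $\sum_j \alpha(c_j) \ge \alpha(p^{s_1}) = 1$ gives $M(P(T)) \ge M(P(R')) + t_1 \sum_j \alpha(c_j)$, which is already strict unless exactly one $c_j$ is nonzero and equal to $p^{s_1-ej}$; only that residual single-entry family needs the column/ordering analysis you perform. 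You should import this carry-free lemma rather than fight the carries by hand.

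The second gap is that your key divisibility claim for the Y-order is false when $e>1$. The Y-order is left-lexicographic on $(s',t,s'')$ with $s = s'e+s''$, so $a_j \ge_Y a_1$ yields only $s'_j \ge s'_1$, not $s_j \ge s_1$: for $q=4$, $a_1 = 8 = P^3_1$ and $a_j = 20 = P^2_2$ satisfy $8 <_Y 20$ with $s_j = 2 < 3 = s_1$. Consequently $r'_n$ for $n > t_1$ need only be divisible by $p^{s'_1 e}$, and your bounds $\sigma_i \ge s_1$ and the claimed gain $(s_1-ei)(p-1)$ at diagonal $t_1$ must be redone with this weaker lower bound (the conclusion survives, since $s_1 - ei < s'_1 e$ for $i \ge 1$, but not for the reason you give). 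The Z-order argument, like the multi-parameter case, is a plan rather than a proof as written.
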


\begin{proof}
Let $a$ and $b$ be $q$-atomic numbers.   In the cases where $P[a]$ and $P[b]$ do not commute in $E^0(\A_q)$, $a$ and $b$ occur in the same order in the Y- and Z-orderings.   Hence the Y- and Z-$P^s_t$ bases of $\A_q$ coincide up to elements of higher May filtration.

To prove that the Milnor basis of $E^0(\A_q)$ also coincides with these, we consider a Z-basis element $P[a_1]P[a_2] \cdots P[a_k]$, where $a_1, a_2, \ldots, a_k$ is a weakly increasing sequence of $q$-atomic numbers with $a_{i+p-1} >_Z a_i$ for all $i$.   We assume by induction on $k$ that $P[a_2] \cdots P[a_k] = P(R)$ in $E^0(\A_q)$, where $R$ is the sequence corresponding to $a_2, \ldots, a_k$ as in Section \ref{sec_graded alg} (see Examples \ref{exa_Monks_corr_1} and \ref{exa_Monks_corr_2}).   For the inductive step, let $a = p^s(q^t-1)/(q-1)$.   We use the Milnor product formula to show that $P[a]P(R) =  P(R')$ in $E^0(\A_q)$ where $R' = R + (0, \ldots, 0, p^s)$, with $p^s$ in position $t$.   We have to consider Milnor matrices
$$
\begin{array}{c|ccccc} &r_1&r_2& \cdots & r_s & \cdots \\ \hline 0 & 0& 0& \cdots & 0 \\ \vdots & \vdots & \vdots && \vdots \\ 0  & 0& 0 & \cdots & 0 \\ c_0  & c_1 & c_2 & \cdots & c_s & \cdots \end{array}
$$
where $\sum_i p^ic_i = p^s$.

We argue as in the proof of Proposition \ref{May_filt_Mil_1}.   By considering row $t$, all such matrices except the initial matrix $c_0 = p^s$, $c_i =0$ for $i>0$ and the final matrix $c_s =1$, $c_i =0$ for $i \neq s$ give elements of higher May filtration.   By considering column $s$, the same is true for the final matrix.
\end{proof}

\section{The Arnon A basis of $\A_p$} \label{sec_Ar_A_p}

Monks \cite{Monks} has compared a number of bases for $\A_2$ with the Milnor basis \cite{Milnor}, and has shown that several of them are triangularly related to the Milnor basis for a suitable bijection between the two bases and for suitable orderings on them.   His results have been generalized to $\A_p$ by Emelyanov and Popelensky \cite{Emelyanov-Popelensky17}.

For example, the admissible basis of $\A_p$ is triangularly related to the Milnor basis as follows.   Recall that admissible monomials in $\A_p$ are elements $P^A = P^{a_1}P^{a_2} \cdots P^{a_\ell}$, where $a_i \ge pa_{i+1}$ for $1 \le i < \ell$, and that they form a $\FF_p$-basis for $\A_p$.   A triangular relation between this basis and the Milnor basis is obtained by associating to $A$ the sequence $R =  (r_1, r_2, \ldots, r_\ell)$, where $r_i = a_i - pa_{i+1}$, and using the right lexicographic order on the sequences $A$ and $R$.   The table below shows the result in degree $9$ when $p=2$.
$$
\begin{array}{|l|ccccc|}\hline
& Sq(2,0,1) & Sq(0,3) & Sq(3,2) & Sq(6,1) & Sq(9)\\ \hline
Sq^6 Sq^2 Sq^1 & 1&1&1&0&0\\  Sq^6 Sq^3 & 0&1&1&1&0\\  Sq^7 Sq^2 & 0&0&1&0&0\\Sq^8 Sq^1  & 0&0&0&1&1\\ Sq^9 & 0&0&0&0&1\\ \hline
\end{array}
$$
When comparing two bases in this way, we need only specify the bijection and the ordering on one of them, as the ordering on the other is defined by the bijection.   Alternatively, we could use orderings on both bases to define the bijection.

The Arnon A basis of $\A_p$ \cite{Arnon, Emelyanov-Popelensky15} is defined as follows.   We begin with the admissible monomials $X^n_k = P^{p^n}P^{p^{n-1}}\cdots P^{p^k}$, $n \ge k \ge 0$, one in each $p$-atomic degree $p^k(p^{n-k+1}-1)/(p-1)$.   A general Arnon A basis element is a product of these elements taken in Z-order, with each factor $X^n_k$ repeated no more than $p-1$ times.   Thus the Arnon A basis is constructed in the same way as the Z-$P^s_t$ basis, but using the elements $X^n_k$ instead of the elements $P^s_t$.   The basis elements have the form $X^{n_1}_{k_1} \cdot X^{n_2}_{k_2} \cdots X^{n_\ell}_{k_\ell}$, where $(n_i, k_i) <_l (n_{i+p-1}, k_{i+p-1})$ for all $i$.   In particular, the factors are distinct when $p=2$.

In the case $p=2$, the bijection defined by Monks \cite{Monks} between the Arnon A basis and the Milnor basis can be described as follows.    Given a Milnor basis element $Sq(R) = Sq(r_1, r_2, \ldots)$, the corresponding Arnon A basis element has one factor $X^n_k$ corresponding to each element in the binary decompositions of the terms of the sequence $R$.   For each $i$ and $j$ such that $2^i \in \bin(r_j)$ we take $X^n_k$, where $k=i$ and $n = i+j-1$, and multiply these elements in Z-order to form the required Arnon A basis element.  Conversely, given an Arnon A basis element $X^{n_1}_{k_1} \cdot X^{n_2}_{k_2} \cdots X^{n_\ell}_{k_\ell}$, let $r_j$ be the sum of the $2$-powers $2^i$ where $j =n_t-k_t+1$ is the length of one of the elements $X^{n_t}_{k_t}$ and $i=k_t$, and form the Milnor basis element $Sq(R) = Sq(r_1, r_2, \ldots)$.   For a general prime $p$, $\bin(r)$ is replaced by the multiset $\pin(r)$ (Definition \ref{def_pin}).   For example if $p=3$ then the Milnor basis element $P(7,2)$ corresponds to the Arnon A basis element $P^1 \cdot P^3P^1 \cdot P^3P^1 \cdot P^3 \cdot P^3$, since $7 =1+3+3$ and $2 =1+1$ give the weakly Z-increasing sequence $(1,4,4,3,3)$.

Monks (for $p=2$) and Emelyanov-Popelensky (for $p>2$) show that the Arnon A basis and the Milnor basis are triangularly related using this bijection between the two bases and the following ordering on the Arnon A basis.   Each Arnon A basis element is defined by a non-decreasing sequence of $p$-atomic numbers.  These sequences are placed in decreasing left lexicographic order $<_Z$, taken with respect to the Z-order and not the usual increasing order on $p$-atomic numbers.   For example, for $p=2$ and degree $d=9$, the change of basis matrix is as follows.
$$
\begin{array}{|c|ccccc|} \hline &Sq(2,0,1) & Sq(0,3) & Sq(6,1) & Sq(9) & Sq(3,2)  \\ \hline  Sq^2 \cdot Sq^4Sq^2Sq^1  &1&1&0&0&1 \\ Sq^2Sq^1 \cdot Sq^4Sq^2 & &1&1&0&1\\Sq^2Sq^1 \cdot Sq^2 \cdot Sq^4  &&& 1&0&0 \\Sq^1 \cdot Sq^8  &&&& 1&0\\Sq^1 \cdot Sq^2 \cdot Sq^4Sq^2   &&&&& 1 \\ \hline \end{array}
$$
Here the $2$-atomic sequences giving the Arnon A elements are ordered as follows:
$$
(2,7) >_Z (3,6) >_Z (3,2,4) >_Z (1,8) >_Z (1,2,6).
$$

Our main result, Theorem \ref{th_Arnon_r}, states that the Arnon A basis and the Milnor basis are also triangularly related using the same bijection, but with a different choice of orderings.   The ordering on the Arnon A basis is the reversed right lexicographic order $<_r$, where these elements are treated simply as monomials in the generators $P^{p^j}$, $j \ge 0$ of $\A_p$.   This ordering ignores the factorization of Arnon A basis elements as products of $p$-atomic basis elements.   For example, for $p=2$ and $d=9$ the change of basis matrix is as follows.
$$
\begin{array}{|c|ccccc|} \hline & Sq(9) & Sq(2,0,1) & Sq(0,3) & Sq(3,2) & Sq(6,1)  \\ \hline Sq^1 \cdot Sq^8 &1&0&0&0&0 \\ Sq^2 \cdot Sq^4Sq^2Sq^1 & &1&1&1&0\\ Sq^2Sq^1 \cdot Sq^4Sq^2 &&& 1&1&1 \\ Sq^1 \cdot Sq^2 \cdot Sq^4Sq^2  &&&& 1&0\\ Sq^2Sq^1 \cdot Sq^2 \cdot Sq^4 &&&&& 1 \\ \hline \end{array}
$$

\begin{remark} {\rm
The ordering $\le_R$ which appears in \cite{Arnon, Emelyanov-Popelensky15} is not the same as $\le_r$: it is obtained by first right-justifying the exponent sequences and then taking lexicographic order from the right.   For example, $(1,1,2) >_R (3,1) =(0,3,1)$, since $2>1$.   However, $(3,1) = (3,1,0) >_r (1,1,2)$ since $0<2$.
}
\end{remark}

In order to establish the A basis of $\A_2$, Arnon \cite{Arnon} considers the set of all formal monomials in the elements $Sq^{2^j}$, $j \ge 0$, by which we mean elements of the free algebra $\A$ generated by these symbols.   Thus $\A_2$ is the quotient algebra of $\A$ by the two-sided ideal generated by the Adem relations and $Sq^0=1$.   As for $\A_2$, $\A$ is graded by giving $Sq^{2^j}$ degree $2^j$.   When the formal monomials of a given degree are taken in increasing left order $\le_l$, Arnon shows that the minimal monomials form the A basis \cite[Theorem 5(A)]{Arnon}.

If the left lexicographic order $\le_l$ is replaced by $\le_r$, then the minimal monomials do not in general give the A basis.   For $p=2$, the first counterexample is in degree $9$, where the $\le_r$-minimal basis is obtained from the A basis by replacing $Sq^2 \cdot Sq^4Sq^2Sq^1$ with $Sq^4Sq^2Sq^1Sq^2$, which is lower in the (reversed) right order $\le_r$.   The relation
\begin{equation} \label{eqn_min_ex}
Sq^2 \cdot Sq^4Sq^2Sq^1 = Sq^4Sq^2Sq^1 \cdot Sq^2 + Sq^1 \cdot Sq^2 \cdot Sq^4Sq^2 + Sq^2Sq^1 \cdot Sq^2 \cdot Sq^4
\end{equation}
shows that $Sq^2 \cdot Sq^4Sq^2Sq^1$ is reducible in the right order.

We define a variant of the A basis by replacing the Z-order on $2$-atomic degrees by the Y-order, and we refer to the original Arnon A basis as the {\bf Z-Arnon A basis}, the new basis as the {\bf Y-Arnon A basis}.    The Y-Arnon A basis elements have the form $X^{n_1}_{k_1} \cdot X^{n_2}_{k_2} \cdots X^{n_\ell}_{k_\ell}$, where $(n_1,k_1)  <_r  (n_2, k_2) <_r \cdots <_r (n_\ell, k_\ell)$.   Thus the elementary Arnon A monomials $X^n_k$ are multiplied  in Y-order of their degrees.   The Y- and Z-Arnon A bases first differ in degree $(p+1)^2$, by replacing $P^p \cdot P^{p^2}P^pP^1$ by $P^{p^2}P^pP^1 \cdot P^p$.   For $p=2$ this is illustrated by (\ref{eqn_min_ex}).

\section{The Arnon A basis of $\A_q$} \label{sec_Ar_A_q}

In this section we generalize the Y- and Z-Arnon A bases to $\A_q$ and prove our main result, Theorem \ref{th_Arnon_r}.   For $q =p^e$, the elementary Arnon A monomial $X^n_k = P^{p^n}P^{p^{n-e}}\cdots P^{p^k}$ is defined when $n \ge k \ge 0$ and $n = k$ mod $e$, and its degree is the $q$-atomic number $p^k(q^{t+1}-1)/(q-1)$, where $n=k+te$.   A general Y- or Z-Arnon A basis element is a product of these elements taken in Y- or Z-order, with each factor $X^n_k$ repeated no more than $p-1$ times.   In other words, the Y- or Z-Arnon A basis is constructed in the same way as the corresponding $P^s_t$ basis, with the elements $P^s_t$ replaced by the elements $X^n_k$.   Thus the basis elements have the form $X^{n_1}_{k_1} \cdot X^{n_2}_{k_2} \cdots X^{n_\ell}_{k_\ell},\quad (n_1,k_1)  \le_l  (n_2, k_2) \le_l \cdots \le_l (n_\ell, k_\ell)$, where $(n_i, k_i) <_l (n_{i+p-1}, k_{i+p-1})$ for all $i$.

\begin{theorem} \label{th_Arnon_r}
{\rm (i)} The Y-Arnon A monomials in $\A_q$ are the minimal Steenrod monomials in increasing $\le_r$ order.

{\rm (ii)} The Y-Arnon A monomials form a basis of $\A_q$.

{\rm (iii)} The Y- and Z-Arnon A bases of $\A_q$ are triangular with respect to the Milnor basis, using the $\le_r$ order on the Arnon A basis and the bijection with the Milnor basis given by the $q$-atomic sequences.
\end{theorem}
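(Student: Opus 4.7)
The plan is to establish (iii) first by a direct Milnor product analysis, then deduce (ii) by counting and linear independence, and finally (i) by combining (ii) with an Adem-reduction argument. I begin by expanding each elementary monomial in the Milnor basis as $X^n_k = P^k_{t+1}$ plus Milnor terms of May filtration strictly greater than $t+1$, where $n = k + te$. This follows from iterated application of the Milnor product formula: the Milnor matrix with no off-diagonal entries contributes the atomic element $P(0, \ldots, 0, p^k)$ (with $p^k$ in position $t+1$) with coefficient $1$, while every other matrix yields a term of strictly higher May filtration by Proposition \ref{Mil_gr}. Iterating through a product $A = X^{n_1}_{k_1} \cdots X^{n_\ell}_{k_\ell}$ and using Proposition \ref{prop_power_comm} with the bijection of Section \ref{sec_graded alg} to identify the minimum-filtration contribution in $E^0(\A_q)$ with $P(R_A)$, one obtains an expansion $A = P(R_A) + \sum_S c_S P(S)$ in which every Milnor summand $P(S)$ satisfies $M(P(S)) > M(A)$.

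The central technical step is to show that each non-leading Milnor term $P(S)$ corresponds under the bijection of Section \ref{sec_graded alg} to an Arnon A monomial $A'$ with $A' <_r A$. Each such $P(S)$ arises from a Milnor matrix carrying at least one nonzero off-diagonal entry $c > 0$, which transfers weight from a given position to a position further along the diagonal sums; translated via the bijection, this replaces an atomic generator $P^{k_i}_{t_i+1}$ appearing in $R_A$ by one or more atomic generators corresponding to larger values of $t$. I would track this change in the exponent sequence of the associated Y-Arnon monomial $A'$ and show that, reading from the right, the exponent sequences of $A$ and $A'$ first differ at a position where $A'$ has the strictly larger entry, giving $A' <_r A$ by Definition \ref{def_lrorders}. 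The Y-order on elementary factors is precisely adapted to this argument, since placing factors with smaller bottom exponents $k_i$ leftmost ensures that the exponent sequence of $A'$ becomes strictly smaller than that of $A$ in the $<_r$ sense. The Z-Arnon A case is then handled by a parallel argument, using that a Z-Arnon monomial differs from its Y-Arnon counterpart with the same atomic multiset by permutations of factors whose Adem corrections lie in strictly $<_r$-smaller terms.

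Given (iii), part (ii) follows at once: the transition matrix from Y-Arnon A monomials to Milnor basis elements is triangular with nonzero diagonal in the $<_r$-induced ordering, so the Y-Arnon A monomials are linearly independent, and a count via multisets of $q$-atomic numbers (with multiplicities at most $p-1$) shows they form a basis. For (i), any Steenrod monomial $M$ not of Y-Arnon A form contains either a pair $P^{p^a} P^{p^b}$ internal to a putative Arnon block with $a \ne b+e$ or a junction between blocks violating the Y-order; the Adem relation applied at that pair expresses $M$ as a sum of monomials with strictly $<_r$-smaller exponent sequences, so $M$ is not $<_r$-minimal. Conversely, a Y-Arnon A monomial cannot be such a combination, since iterating the reduction would express it as a combination of strictly $<_r$-smaller Y-Arnon A monomials, contradicting (ii). The main obstacle is the combinatorial argument in the preceding paragraph, where each off-diagonal entry in a Milnor matrix must be traced through the bijection into a strict $<_r$-decrease of the associated Arnon A monomial at the rightmost changed position; the adaptation of the Y-order to the $<_r$-order is the key new ingredient distinguishing this work from the $<_Z$-order argument of Monks and Emelyanov-Popelensky, and extending to $q = p^e$ requires careful bookkeeping of positions modulo $e$ in the $q$-atomic structure.
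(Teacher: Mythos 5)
There is a genuine gap, and it sits at the foundation of your argument. Your opening claim --- that $X^n_k = P^k_{t+1}$ plus Milnor terms of May filtration \emph{strictly greater} than $t+1$ --- is false. Already $Sq^2Sq^1 = Sq(0,1) + Sq(3)$ with $M(Sq(3)) = \alpha(3) = 2 = M(Sq(0,1))$, and the paper's own degree-$9$ table shows $Sq^2\cdot Sq^4Sq^2Sq^1 = Sq(2,0,1)+Sq(0,3)+Sq(3,2)$ with all three terms of filtration $4$. This is unavoidable: $X^n_k$ is a word of length $t+1$ in the generators, so its image in $E^0(\A_q)$ lives in filtration exactly $t+1$ and is a \emph{sum} of same-filtration Milnor terms, governed by the commutator relations of Proposition \ref{prop_power_comm}. (Note also that the leading term $P^k_{t+1}$ does not come from the Milnor matrix with no off-diagonal entries --- in $Sq^2\cdot Sq^1$ that matrix yields $Sq(3)$, not $Sq(0,1)$.) Consequently your ``central technical step'' is aimed at the wrong target: it only analyses Milnor matrix entries that raise the filtration, whereas the terms that make triangularity nontrivial are precisely the equal-filtration cross terms, for which a length comparison gives nothing and an entry-by-entry $<_r$ comparison must actually be carried out. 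The paper does this by passing through the $P^s_t$ bases (Proposition \ref{prop_YZ_Pst}) and expanding $P^s_t$ as an iterated commutator of the $P[p^j]$ in $E^0(\A_q)$, identifying its $\le_r$-maximal term as $X^{s+(t-1)e}_s$; nothing in your sketch substitutes for that computation.

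Your route to (i) is also not viable as stated. Applying an Adem relation to an inadmissible pair rewrites a monomial as a sum of monomials that are \emph{larger} in both $<_l$ and $<_r$ (this is exactly how Proposition \ref{admis_span} works), and applied to a pair of generators $P^{p^a}P^{p^b}$ it produces factors $P^{p^a+p^b-j}P^j$ that are not generators, so ``strictly $<_r$-smaller exponent sequences'' is not even defined without further expansion. The reductions needed for (i) are the four patterns the paper isolates, and they are handled not by Adem relations but by the power--commutator relations in $E^0(\A_q)$ together with the expansion of Proposition \ref{prop_Xn0} (Propositions \ref{prop_SX}, \ref{prop_XS}, \ref{prop_XX}); the hardest case, $P^{p^m}X^n_k$, genuinely requires that machinery. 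Reversing the logical order (proving (iii) first and deducing (ii)) would be legitimate if the triangularity argument were sound, but as it stands both pillars of the proposal need to be rebuilt.
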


Statement (ii) follows at once from (i).   It follows from (iii) that the Z-Arnon A basis is a basis for $\A_q$, and that it is triangular with respect to the Y-Arnon A basis when both are in the $\le_r$ order.

The proof of Theorem \ref{th_Arnon_r} will occupy the rest of this section.   We begin by expressing the particular elements $X^n_0$ of the Arnon A basis, regarded as elements of the graded algebra $E^0(\A_q)$, in the Z-$P^s_t$  basis.   A typical Z-$P^s_t$ basis element has the form $P[a_1]P[a_2] \cdots P[a_r]$, where $(a_1, a_2, \ldots, a_r)$ is a sequence of $q$-atomic numbers in increasing Z-order, and has May filtration $\sum_{i=1}^r \alpha(a_i)$.

\begin{proposition} \label{prop_Xn0}
Let $q =p^e$ where $p$ is prime, and let $n =te + k$, where $0 \le k < e$.   Then in $E^0(\A_q)$
$$
X^n_k = \sum_{r=k}^t X^{(r-1)e+k}_k P[p^k(q^{t+1}-q^r)/(q-1)], \hbox{\ \rm where\ } X^{-e}_k =1.
$$
\end{proposition}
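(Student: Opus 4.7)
The plan is to proceed by induction on $t$, leveraging the presentation of $E^0(\A_q)$ via generators $P[a]$ and power-commutator relations from Proposition \ref{prop_power_comm}. The base case $t = 0$ is immediate: $X^k_k = P^{p^k}$, while the right-hand side is the single term $X^{k-e}_k \cdot P[p^k(q-1)/(q-1)] = 1 \cdot P[p^k] = P^{p^k}$, where $X^{k-e}_k$ is interpreted as the empty product.

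For the inductive step, I would write $X^n_k = P^{p^n} \cdot X^{n-e}_k$ (noting $n-e = (t-1)e+k$) and insert the inductive expression for $X^{n-e}_k$, obtaining
$$X^n_k = \sum_{r=0}^{t-1} P^{p^n} \cdot X^{(r-1)e+k}_k \cdot P[p^k(q^t-q^r)/(q-1)].$$
The crucial step is to move $P^{p^n}$ past each $X^{(r-1)e+k}_k$, which is a product of generators $P[p^j]$ with $k \le j \le (r-1)e + k \le (t-2)e + k = n - 2e$. By Proposition \ref{prop_power_comm}, $[P[p^n], P[p^j]]$ vanishes unless $p^n + p^j$ is $q$-atomic, which forces $|n - j| = e$; since here $n - j \ge 2e$, each such commutator vanishes, and since the commutator is a derivation on products in the associative algebra $E^0(\A_q)$, $P^{p^n}$ commutes through $X^{(r-1)e+k}_k$.

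It then remains to commute $P^{p^n}$ past $P[p^k(q^t-q^r)/(q-1)] = P^{k+er}_{t-r}$. Here the $p$-exponents satisfy $n = (k+er) + e(t-r)$, which is exactly the condition under which the commutator in Proposition \ref{prop_power_comm} is nonzero and equals $P^{k+er}_{t-r+1} = P[p^k(q^{t+1}-q^r)/(q-1)]$ (using the identity $p^{k+er}(q^{t-r+1}-1)/(q-1) = p^k(q^{t+1}-q^r)/(q-1)$). Substituting
$$P^{p^n} \cdot P[p^k(q^t-q^r)/(q-1)] = P[p^k(q^t-q^r)/(q-1)] \cdot P^{p^n} + P[p^k(q^{t+1}-q^r)/(q-1)]$$
and splitting the result into two sums, the first sum reassembles to $X^{n-e}_k \cdot P^{p^n}$ by reapplying the inductive hypothesis. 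Since $p^k(q^{t+1}-q^t)/(q-1) = p^n$ and $X^{(t-1)e+k}_k = X^{n-e}_k$, this is precisely the $r = t$ term of the target identity, while the second sum contributes the terms for $r = 0, 1, \ldots, t-1$, completing the induction.

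The main obstacle is not deep but is largely bookkeeping with $q$-atomic arithmetic: one must check at each step that the commutator conditions of Proposition \ref{prop_power_comm} are satisfied (so that the correct $q$-atomic number is produced) and that the ``gap'' $n - j \ge 2e$ really holds throughout the product $X^{(r-1)e+k}_k$ for $r \le t-1$. Once these alignments are verified, the induction is essentially formal.
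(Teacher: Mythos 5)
Your proof is correct and follows essentially the same route as the paper's: induction on $t$ via $X^n_k = P^{p^n}X^{n-e}_k$, commuting $P^{p^n}$ through the lower factors (where all commutators vanish because the exponent gap exceeds $e$) and using the single nontrivial commutator $[P^{p^n}, P[p^k(q^t-q^r)/(q-1)]] = P[p^k(q^{t+1}-q^r)/(q-1)]$ to generate the new terms, then reassembling the remaining sum by a second application of the inductive hypothesis. The only cosmetic differences are that the paper proves the case $k=0$ and deduces the general case by multiplying all exponents by $p^k$, whereas you run the induction for general $k$ directly, and that you correctly read the summation as starting at $r=0$, consistent with the paper's own proof and examples.
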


When $k=0$, $t=1$ this states that $P^qP^1 = P^1P[q] +P[q+1]$, and when $k=0$, $t=2$ that $P^{q^2}P^qP^1 = P^qP^1P[q^2] + P^1P[q^2+q] + P[q^2+q+1]$.   Recall that $P[q] = P^q$, $P[q^2] =P^{q^2}$, $P[q+1] = P(0,1)$, $P[q^2+q] = P(0,q)$ and $P[q^2+q+1] = P(0,0,1)$.

\begin{proof}
We first establish the case $k=0$, namely
$$
X^n_0 = \sum_{r=0}^t X^{(r-1)e}_0 P[(q^{t+1}-q^r)/(q-1)],
$$
by induction on $t$.   Thus let $t \ge 2$, and assume that the above formula holds for $t-1$ in place of $t$.   Since $X^{te}_0 = P^{q^t}X^{(t-1)e}_0$, the induction hypothesis gives $X^{te}_0 = P^{q^t}\sum_{r=0}^{t-1} X^{(r-1)e}_0 P[(q^t -q^r)/(q-1)]$.   Since $P^{q^t}$ and $P^{q^r}$ commute in $E^0(\A_q)$ for $0 \le r \le t-2$, this gives $X^n_0 = \sum_{r=0}^{t-1} X^{(r-1)e}_0 P^{q^t}P[(q^t -q^r)/(q-1)]$.   Using the relation $[P^{q^t},P[(q^t - q^r)/(q-1)]] = P[(q^{t+1} -q^r)/(q-1)]$, it follows that
$$
X^{te}_0 =  \sum_{r=0}^{t-1} X^{(r-1)e}_0 (P[(q^t -q^r)/(q-1)]P^{q^t} + P[(q^{t+1} - q^r)/(q-1)]).
$$
Applying the induction hypothesis again, this gives
\begin{eqnarray*}
X^{te}_0 &=& X^{(t-1)e}_0 P^{q^t} +  \sum_{r=0}^{t-1} X^{(r-1)e}_0 P[(q^{t+1} -q^r)/(q-1)] \\ &=& \sum_{r=0}^t X^{(r-1)e}_0 P[(q^{t+1} - q^r)/(q-1)].
\end{eqnarray*}
This completes the induction, and the proof for $k=0$.   The general case follows, as the relations in $E^0(\A_q)$ are unchanged when all exponents are multiplied by $p$.
\end{proof}

As in \cite{Arnon}, to prove parts (i) and (ii) of Theorem \ref{th_Arnon_r} it suffices to show that any monomial in $\A_q$ which is not of the required form is reducible in the $\le_r$ order, and that the number of monomials of the required form in each degree $d$ is the dimension of $\A_q^d$.   The second statement is clear since, as for the $P^s_t$ bases, there is one elementary Arnon A monomial in each $q$-atomic degree, and these are used to form monomials with exponents $<p$.   For the first statement, we observe that a formal monomial $P^A = P^{p^{j_1}} \cdots P^{p^{j_r}}$  is not a Y-Arnon A basis element if and only if at least one of the following cases occurs:

(1) for some $k$, $j_k > j_{k+1}$ and $j_k -j_{k+1} \neq e$;

(2) the sequence $P[p^m]X^n_k$ with $k<m'e$, $m \le n$ and $m = m'e+m''$, $0 \le m'' <e$, appears in $P^A$;

(3) the sequence $X^n_kP[p^k]$ with $k<n$ appears in $P^A$;

(4) the sequence $X^n_k \cdots X^n_k$ ($p$ factors) appears in $P^A$.

To prove these statements, it is sufficient to work in $E^0(\A_q)$.    Case (1) is resolved immediately, since $P[p^s]$ and $P[p^t]$ commute in  $E^0(\A_q)$ if $t \neq s+e$.

We deal next with Case (2).   For $p=2$, the minimal case $Sq^2 \cdot Sq^4Sq^2Sq^1$ is lowered in the $\le_r$ order by relation (\ref{eqn_min_ex}) above.   We use Proposition \ref{prop_Xn0} to obtain a similar reduction in general.

\begin{proposition} \label{prop_SX}
In $E^0(\A_q)$, $P^{p^m}X^n_k$ can be reduced in the $\le_r$ order for $k<m'e$, $m \le n$ and $m = m'e+m''$, $0 \le m'' <e$.
\end{proposition}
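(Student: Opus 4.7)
The plan is to work in $E^0(\A_q)$ (as already justified) and to split the argument according to whether $m$ is congruent to $k$ modulo~$e$.

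If $m \not\equiv k \pmod e$, every factor $P^{p^j}$ of $X^n_k$ has $j \equiv k \pmod e$, so $m-j$ is never divisible by~$e$, and in particular $|m-j|\neq e$. By the commutator relations of Proposition~\ref{prop_power_comm}, $P^{p^m}$ commutes in $E^0(\A_q)$ with every factor of $X^n_k$, whence $P^{p^m} X^n_k = X^n_k\,P^{p^m}$ in $E^0(\A_q)$. The rearranged monomial ends in $P^{p^m}$ rather than $P^{p^k}$, and the hypothesis $k < m'e \le m$ forces $p^m > p^k$, so it is strictly smaller than $P^{p^m} X^n_k$ in the $\le_r$ order.

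If instead $m \equiv k \pmod e$, write $m = k + je$ with $1 \le j \le t$, where $n = k + te$. Apply Proposition~\ref{prop_Xn0} to expand, in $E^0(\A_q)$,
$$X^n_k \;=\; P[a_0] \;+\; \sum_{r=1}^{t} X^{(r-1)e+k}_k\,P[a_r], \qquad a_r = \frac{p^k(q^{t+1}-q^r)}{q-1},$$
and multiply on the left by $P^{p^m}$. For the $r=0$ summand, a direct inspection of base-$p$ digits shows that, under $1\le j\le t$, the integer $p^m + a_0$ is never $q$-atomic: for $p>2$ a digit~$2$ appears at position~$m$, while for $p=2$ the cascading carry initiated at position~$m$ leaves a gap in the arithmetic progression of the nonzero positions. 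By Proposition~\ref{prop_power_comm}, $[P^{p^m},P[a_0]]=0$ in $E^0(\A_q)$, so $P^{p^m}P[a_0] = P[a_0]\,P^{p^m}$. Representing $P[a_0]$ by its formal-monomial form $X^n_k$ in $E^0(\A_q)$ contributes $X^n_k\,P^{p^m}$, ending in $P^{p^m} > P^{p^k}$. For each $r\ge 1$, representing $P[a_r]$ analogously by $X^n_{k+re}$ contributes a formal monomial ending in $P^{p^{k+re}}$, which is strictly greater than $P^{p^k}$ since $r\ge 1$. In every case the resulting formal monomial is strictly smaller than $P^{p^m}X^n_k$ in~$\le_r$.

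The main technical obstacle is making the substitutions $P[a] \leftrightarrow X^n_{k+re}$ rigorous: these elements agree in $E^0(\A_q)$ only up to terms of lower rank under the Arnon A~$\leftrightarrow$~Milnor bijection, and one must verify that the error terms produced by the substitutions, together with any contributions from the non-trivial commutators $[P^{p^m},P^{p^{m\pm e}}]$ that arise as $P^{p^m}$ is moved through the factors of $X^{(r-1)e+k}_k$ and with the power relation $(P^{p^m})^p = 0$, all give formal monomials strictly smaller than $P^{p^m}X^n_k$ in~$\le_r$. Running the argument by induction on $t$, invoking the conclusion of the proposition for shorter $X$-blocks, organizes this bookkeeping and completes the reduction.
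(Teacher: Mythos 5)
Your Case 1 ($m \not\equiv k \bmod e$) is correct and complete: $P^{p^m}$ then commutes in $E^0(\A_q)$ with every factor of $X^n_k$, and the rearranged word ends in $P^{p^m}>P^{p^k}$. The gap is in Case 2. The substitutions you lean on are not identities: $P[a_0]\neq X^n_k$ and $P[a_r]\neq X^n_{k+re}$ in $E^0(\A_q)$ --- Proposition \ref{prop_Xn0} says precisely that they differ by the remaining summands --- and you explicitly defer the verification that all the resulting correction terms, the commutators $[P^{p^m},P^{p^{m\pm e}}]$ and the power relation produce $\le_r$-lower monomials. That deferred check is the entire content of the proposition, so as written this is a sketch rather than a proof. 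Moreover, the plan of sliding $P^{p^m}$ rightwards through each summand would hit a genuine obstruction you have not noticed: for $r=j+1$ (when $j<t$) the integer $p^m+a_{j+1}=p^m(q^{t-j+1}-1)/(q-1)$ \emph{is} $q$-atomic, so $[P^{p^m},P[a_{j+1}]]=P[p^m+a_{j+1}]\neq 0$; and for $r\ge j$ the block $X^{(r-1)e+k}_k$ contains the factor $P^{p^{m-e}}$ (and, for larger $r$, also $P^{p^m}$ and $P^{p^{m+e}}$), so $P^{p^m}$ cannot be moved through it freely.

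The repair is simpler than the bookkeeping you propose and needs no induction on $t$: leave $P^{p^m}$ at the front and expand each $P[a_r]$ as the iterated commutator $[P^{p^n},[\ldots,[P^{p^{k+(r+1)e}},P^{p^{k+re}}]\ldots]]$, as in the proof of Proposition \ref{prop_YZ_Mil}. Then every formal monomial arising from $P^{p^m}X^{(r-1)e+k}_kP[a_r]$ with $r\ge 1$ has length $t+2$ and ends in some $P^{p^i}$ with $i\ge k+re>k$, so it is $<_r P^{p^m}X^n_k$ by comparing last entries alone; for $r=0$ your commutation $P^{p^m}P[a_0]=P[a_0]P^{p^m}$ is correct (and is the only commutation needed), after which every monomial ends in $P^{p^m}>P^{p^k}$. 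For comparison, the paper does not expand the whole of $X^n_k$ by Proposition \ref{prop_Xn0}: it strips off the common tail $X^{m-2e}_k$ and the commuting head $X^n_{m+2e}$ to reduce to the two configurations $P^{p^m}\cdot X^m_{m-e}$ and $P^{p^m}\cdot X^{m+e}_{m-e}$, which it treats by hand.
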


\begin{proof}

We can immediately reduce to $k =m-1$, since for $k<m-1$ we have $P^{p^m}X^n_k = P^{p^m}X^n_{m-1} \cdot X^{m-2}_k$, and a reduction of $P^{p^m}X^n_{m-1}$ in the right order gives a reduction of $P^{p^m}X^n_{m-1} \cdot X^{m-2}_k$ in the right order.

Similarly, we can reduce to $n=m$ or $n=m+e$ using Case (1).   For example, suppose that we wish to reduce $P^p \cdot P^{p^3}P^{p^2}P^pP^1$ in the $\le_r$ order.   Since $P^pP^{p^3} = P^{p^3}P^p$ in $E^0(\A_p)$, it is sufficient to lower $P^p \cdot P^{p^2}P^pP^1$ in the $\le_r$ order.

Thus let $n=m$, $k=m-e$.   We have $P^{p^m} \cdot P^{p^m}P^{p^{m-e}} = P^{p^m}( P^{p^{m-e}}P^{p^m} + P[p^m + p^{m-e}])$ in $E^0(\A_q)$.   The first term is $<_r P^{p^m} \cdot P^{p^m}P^{p^{m-e}}$.   The second term reduces to $P[p^m + p^{m-e}] P^{p^m}$ in $E^0(\A_q)$.   On expanding $P[p^m + p^{m-e}]$ as $P^{p^m}P^{p^{m-e}} -  P^{p^{m-e}}P^{p^m}$, we obtain two terms, and both are $<_r P^{p^m} \cdot P^{p^m}P^{p^{m-e}}$.

Finally let $n=m+e$, $k =m-e$.   By Proposition \ref{prop_Xn0}, we have $P^{p^m} X^{m+e}_{m-e} = P^{p^m}(P^{p^m}P^{p^{m-e}}P[p^{m+e}] + P^{p^{m-e}} P[p^{m+e}+p^m] + P[p^{m+e}+p^m + p^{m-e}])$.   The first term can be lowered in the $\le_r$ order by the previous case, and the second by expanding $P[p^{m+e}+p^m]$ as $P^{p^{m+e}}P^{p^m} -  P^{p^m}P^{p^{m+e}}$.   The third term $P^{p^m} P[p^{m+e}+p^m + p^{m-e}] = P[p^{m+e}+p^m + p^{m-e}] P^{p^m}$ in $E^0(\A_q)$.   The expansion $P[p^{m+e}+p^m + p^{m-e}] = [P^{p^{m+e}}, [P^{p^m}, P^{p^{m-e}}]]$ gives four terms, which are all $<_r  P^{p^m} X^{m+e}_{m-e} = P^{p^m} \cdot P^{p^{m+e}} P^{p^m} P^{p^{m-e}}$.
\end{proof}

We next treat case (3) by the same method.

\begin{proposition} \label{prop_XS}
In $E^0(\A_q)$, $X^n_k P^{p^k}$ can be reduced in the $\le_r$ order for $k<n$, $n=k+te$.
\end{proposition}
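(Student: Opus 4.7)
The plan is to mirror the tail-commutation strategy of Proposition \ref{prop_SX}, working in $E^0(\A_q)$ and invoking the commutator relations of Proposition \ref{prop_power_comm}. Writing $n = k + te$ with $t \ge 1$, I would factor
$$
X^n_k P^{p^k} \;=\; Y\cdot P^{p^{k+e}}P^{p^k}P^{p^k},
$$
where $Y = P^{p^n}P^{p^{n-e}}\cdots P^{p^{k+2e}}$ is the (possibly empty) unchanged head; all manipulations will be confined to the three-element tail, so reducibility in the $\le_r$ order reduces to a tail comparison.

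The first move is the commutation $P^{p^{k+e}}P^{p^k} = P^{p^k}P^{p^{k+e}} + P[p^{k+e}+p^k]$ in $E^0(\A_q)$, which is valid because $p^{k+e}+p^k = p^k(q+1)$ is $q$-atomic (with $s=k$, $t=2$) and is not a power of $p$. This splits $X^n_k P^{p^k}$ as
$$
Y\cdot P^{p^k}P^{p^{k+e}}P^{p^k} \;+\; Y\cdot P[p^{k+e}+p^k]\,P^{p^k}.
$$
For the second summand I would verify that $P[p^{k+e}+p^k]$ commutes with $P^{p^k}$ in $E^0(\A_q)$: by Proposition \ref{prop_power_comm} the potential commutator is a multiple of $P[p^{k+e}+2p^k]$, but $p^{k+e}+2p^k$ has a base-$p$ digit $2$ at position $k$ and so is not $q$-atomic, except in the borderline case $p=2$, $e=1$ where carries collapse it to $p^{k+2}$, a power of $p$; in either subcase the commutator vanishes. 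Re-expanding $P[p^{k+e}+p^k]$ as $P^{p^{k+e}}P^{p^k} - P^{p^k}P^{p^{k+e}}$ then rewrites the second summand as $Y\cdot P^{p^k}P^{p^{k+e}}P^{p^k} - Y\cdot P^{p^k}P^{p^k}P^{p^{k+e}}$, so altogether
$$
X^n_k P^{p^k} \;=\; 2\,Y\cdot P^{p^k}P^{p^{k+e}}P^{p^k} \;-\; Y\cdot P^{p^k}P^{p^k}P^{p^{k+e}}
$$
in $E^0(\A_q)$ (for $p=2$ the first term vanishes, consistent with the power relation $(P^{p^k})^2 = 0$).

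Finally I would invoke Definition \ref{def_lrorders}: the original tail $(p^{k+e},p^k,p^k)$ is beaten by $(p^k,p^{k+e},p^k)$ (equal in the last slot, strictly larger in the second-to-last) and by $(p^k,p^k,p^{k+e})$ (strictly larger already in the last slot), so both monomials on the right-hand side lie strictly below $X^n_k P^{p^k}$ in $<_r$. The one real obstacle I anticipate is the middle step, where the asymmetric commutator condition of Proposition \ref{prop_power_comm} and the edge case $p=2$, $e=1$ have to be tracked carefully; once that is in hand, the rest is just the observation that every rearrangement moves the larger exponent $p^{k+e}$ rightward in the exponent sequence, which is precisely what lowers a monomial in the reversed right-lexicographic order.
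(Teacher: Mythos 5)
Your proof is correct and follows essentially the same route as the paper's: reduce to the tail $P^{p^{k+e}}P^{p^k}P^{p^k}$, split off $P[p^{k+e}+p^k]$ via the commutator relation, commute it past $P^{p^k}$, and re-expand it as $[P^{p^{k+e}},P^{p^k}]$ so that every resulting monomial moves $p^{k+e}$ rightward and hence is strictly lower in $\le_r$. Your extra verification that $P[p^{k+e}+p^k]$ and $P^{p^k}$ commute (which the paper merely asserts) is welcome, though note that for $p=2$, $e\ge 2$ the ``digit $2$'' justification does not literally apply --- there $p^{k+e}+2p^k=2^{k+e}+2^{k+1}$ has its two $1$-digits $e-1$ apart rather than $e$, so it is still not $q$-atomic and the commutator still vanishes.
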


\begin{proof}
We can immediately reduce to the case $n=k+e$, since for $n>k+e$ we have $X^n_k P^{p^k} = X^n_{k+2e}X^{k+e}_k P^{p^k}$, and a reduction of $X^{k+e}_k P^{p^k}$ in the right order gives a reduction of $X^n_{k+2e}X^{k+e}_k P^{p^k}$ in the right order.

For $n=k+e$, we have $X^{k+e}_k P^{p^k} = P^{p^{k+e}}P^{p^k}P^{p^k} = (P^{p^k}P^{p^{k+e}} + P[p^{k+e}+ p^k])P^{p^k}$ in $E^0(\A_q)$.   The first term is $<_r P^{p^{k+e}}P^{p^k}P^{p^k}$, and the second term $P[p^{k+e}+ p^k]P^{p^k} = P^{p^k}P[p^{k+e}+ p^k] =  P^{p^k}[P^{p^{k+e}}, P^{p^k}] $ in $E^0(\A_q)$, and both terms are $<_r P^{p^{k+e}}P^{p^k}P^{p^k}$.
\end{proof}

The following example illustrates the proof of case (4), using the same method.

\begin{example} \label{X21_p3} {\rm
Let $p=3$ and consider $X^2_1X^2_1X^2_1 = P^3P^1P^3P^1P^3P^1$.   We reduce this in the right order in $E^0(\A_3)$ as follows.   Recalling that $P[4] = P(0,1)$, we have $P^3P^1P^3P^1P^3P^1 = P^3P^1P^3P^1(P^1P^3 + P[4])$.   The first term is $<_r P^3P^1P^3P^1P^3P^1$, and the second is equal to $P[4] P^3P^1P^3P^1 = P[4] P^3P^1 (P^1P^3 + P[4])$.   Since $P[4] = [P^3,P^1]$, the first term is again $<_r P^3P^1P^3P^1P^3P^1$.   The second term $P[4]P^3P^1P[4] = P[4]P[4]P^3P^1 = P[4]P[4](P^1P^3 + P[4])$.   The first term is again $<_r P^3P^1P^3P^1P^3P^1$, and the second $P[4]P[4]P[4]=0$ in $E^0(\A_3)$.
}
\end{example}

\begin{proposition} \label{prop_XX}
In $E^0(\A_q)$, $X^n_k \cdots X^n_k$ ($p$ factors) can be reduced in the $\le_r$ order for $k \le n$, $n=k+te$.
\end{proposition}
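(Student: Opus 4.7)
The plan is to follow the iterative pattern of Example~\ref{X21_p3} and proceed by induction on the integer $t = (n-k)/e \ge 0$. In the base case $t = 0$, the monomial $X^n_k$ reduces to $P^{p^k} = P[p^k]$, and the power relation $P[p^k]^p = 0$ from Proposition~\ref{prop_power_comm} gives $(X^n_k)^p = 0$ in $E^0(\A_q)$, which is vacuously $<_r$-reducible. For the inductive step ($t \ge 1$), set $a = p^k(q^{t+1}-1)/(q-1)$, the $q$-atomic degree of $X^n_k$. Applying Proposition~\ref{prop_Xn0} yields, in $E^0(\A_q)$, the decomposition
\[
X^n_k = X^{n-e}_k P^{p^n} + P[a] + \sum_{r=1}^{t-1} X^{(r-1)e+k}_k P[c_r],
\]
where $c_r = p^k(q^{t+1}-q^r)/(q-1)$.

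Substituting this into the rightmost copy of $(X^n_k)^p = (X^n_k)^{p-1}\, X^n_k$ and iterating $p$ times produces, in $E^0(\A_q)$, the expression
\[
(X^n_k)^p = P[a]^p + \sum_{j=0}^{p-1} P[a]^j (X^n_k)^{p-j-1} X^{n-e}_k P^{p^n} + \sum_{j=0}^{p-1} \sum_{r=1}^{t-1} P[a]^j (X^n_k)^{p-j-1} X^{(r-1)e+k}_k P[c_r].
\]
This iteration is legitimate because $P[a]$ commutes with every $P[p^j]$ for $j \in \{k, k+e, \ldots, n\}$: indeed, $a + p^j$ repeats a base-$p$ digit of $a$ and hence fails to be $q$-atomic, forcing commutativity by Proposition~\ref{prop_power_comm}. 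The term $P[a]^p$ vanishes by the power relation of Proposition~\ref{prop_power_comm}. Each ``leading'' summand $P[a]^j (X^n_k)^{p-j-1} X^{n-e}_k P^{p^n}$, after expansion of the left-hand $P[a]^j$ into a linear combination of formal monomials, ends with the generator $P^{p^n}$. Since $(X^n_k)^p$ ends with $P^{p^k}$ and $p^n > p^k$, the rightmost-position comparison of Definition~\ref{def_lrorders} shows that each such formal monomial is $<_r (X^n_k)^p$.

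The main obstacle lies in the ``intermediate'' summands $P[a]^j (X^n_k)^{p-j-1} X^{(r-1)e+k}_k P[c_r]$, which are vacuous when $t = 1$ (as in Example~\ref{X21_p3}) but non-trivial for $t \ge 2$. The elements $P[c_r]$ are the $q$-atomic generators of the shorter Arnon A blocks $X^n_{k+re}$, and unlike $P[a]$ they do not commute with every factor of $X^n_k$: for instance, $[P[c_r], P^{p^{k+(r-1)e}}] = P[c_{r-1}]$ is non-zero. The plan is to apply Proposition~\ref{prop_Xn0} once more, this time to $X^n_{k+re}$, so as to rewrite $P[c_r]$ as $X^n_{k+re}$ plus lower-order Arnon A corrections, iterating until every resulting expression is a formal monomial ending in $P^{p^m}$ for some $m \ge k + e$. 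Since $m > k$, the rightmost-position comparison again places each such monomial $<_r (X^n_k)^p$. The principal technical care will lie in verifying that this secondary recursion on $r$ terminates and that the rightmost-position invariant $m > k$ is preserved through every cascading correction.
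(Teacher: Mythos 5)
Your overall strategy coincides with the paper's: expand the rightmost factor $X^n_k$ via Proposition~\ref{prop_Xn0}, use the commutativity of $P[a]$ with every $P^{p^j}$ for $k\le j\le n$ to push it to the left, iterate $p$ times, and kill $P[a]^p$ by the power relation of Proposition~\ref{prop_power_comm}. Your identity for $(X^n_k)^p$ is correct, and so is your treatment of the summands ending in $P^{p^n}$: all formal monomials involved have length $p(t+1)$, so the comparison $p^n>p^k$ in the last position does give $<_r$ under Definition~\ref{def_lrorders}.

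The gap is in the ``intermediate'' summands $P[a]^j(X^n_k)^{p-j-1}X^{(r-1)e+k}_kP[c_r]$ for $1\le r\le t-1$, which you identify as the main obstacle and for which you only sketch a secondary recursion (re-inverting Proposition~\ref{prop_Xn0} to write $P[c_r]$ as $X^n_{k+re}$ plus corrections), explicitly leaving its termination and invariants unverified. No such recursion is needed, and the non-commutativity of $P[c_r]$ with $P^{p^{k+(r-1)e}}$ is irrelevant, because $P[c_r]$ never has to be moved: it already sits at the right end of the monomial. The paper's resolution is the same device you already use for $P[a]^j$ in the leading summands: expand $P[c_r]=P^{k+re}_{t+1-r}$ as the iterated commutator $[P^{p^n},[P^{p^{n-e}},[\ldots[P^{p^{k+(r+1)e}},P^{p^{k+re}}]\ldots]$ of the generators, the identity recorded at the end of the proof of Proposition~\ref{prop_YZ_Mil} and already exploited in Propositions~\ref{prop_SX} and~\ref{prop_XS}. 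Every formal monomial in that expansion ends in some $P^{p^m}$ with $m\ge k+re\ge k+e>k$ and has total length $p(t+1)$, so exactly the rightmost-position comparison you apply to the leading summands disposes of these terms as well. Your proposed detour would in fact terminate (the lower index of the Arnon block strictly increases and is bounded by $n$), but as written the argument is incomplete precisely at the step you defer.
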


\begin{proof}
We first expand the last factor $X^n_k$ as $\sum_{r=k}^t X^{(r-1)e+k}_k P[p^k(q^{t+1} -q^r)/(q-1)]$ using Proposition \ref{prop_Xn0}.   This gives $n-k+1$ terms, of which all but the last term $X^n_k \cdots X^n_kP[p^n + p^{n-e} + \cdots + p^k]$ (with $p-1$ factors $X^n_k$) are shown to be $<_r X^n_k \cdots X^n_k$ (with $p$ factors) by expanding $P[(p^{n+e} - p^r)/(p-1)]$ as an iterated commutator.   The last term is equal to $P[p^n + p^{n-e} + \cdots + p^k]X^n_k \cdots X^n_k$ since $P[p^n + p^{n-e} + \cdots + p^k]$ commutes with $X^n_k$ in $E^0(\A_q)$.   We repeat this process by expanding the last factor $X^n_k$ using Proposition \ref{prop_Xn0}, and observe that all terms but the last are $<_r X^n_k \cdots X^n_k$ ($p$ factors), while the last term is equal to $P[p^n + p^{n-e} + \cdots + p^k]P[p^n + p^{n-e} + \cdots + p^k]X^n_k \cdots X^n_k$ ($p-2$ factors $X^n_k$).   Iterating this process a further $p-2$ times, we are left with the term $P[p^n + p^{n-e} + \cdots + p^k] \cdots P[p^n + p^{n-e} + \cdots + p^k]$ ($p$ factors), which is $0$ in $E^0(\A_q)$.
\end{proof}

This completes the proof of parts (i) and (ii) of Theorem \ref{th_Arnon_r}.   We turn to part (iii).

\begin{proposition} \label{prop_YZ_Mil}
The Y- and Z-Arnon A bases of $\A_q$ are triangular with respect to the Milnor basis, when the Arnon bases are taken in $\le_r$ order.
\end{proposition}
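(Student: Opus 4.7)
My plan is to pass to $E^0(\A_q)$, where by Proposition \ref{prop_YZ_Pst} the Z-$P^s_t$ basis coincides with the Milnor basis; it then suffices to prove triangularity of the Arnon A bases against the Z-$P^s_t$ basis in the $\le_r$ order. The Y- and Z-Arnon cases can be treated by parallel arguments, differing only in the ordering convention for the elementary factors, and I focus on the Z case. The Milnor expansion of an Arnon A element $A$ in $\A_q$ splits naturally into a same-filtration block and a higher-filtration tail by Proposition \ref{Mil_gr}. The tail is handled automatically: by Proposition \ref{May_filt_Mil_q} the May filtration of a Milnor element equals the total generator count of its Arnon preimage under the Monks--Emelyanov--Popelensky bijection of Section \ref{sec_graded alg}, so higher-filtration Milnor terms have Arnon preimages with strictly more $P^{p^j}$-generators than $A$ but the same degree, and padding such a longer flat exponent sequence with trailing zeros makes it strictly $\le_r$-smaller than $A$ by Definition \ref{def_lrorders}.

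The same-filtration block requires the core computation, which takes place in $E^0(\A_q)$. I would apply Proposition \ref{prop_Xn0} (in its scaled form for $k > 0$) recursively to each elementary factor of $A = X^{n_1}_{k_1} \cdots X^{n_\ell}_{k_\ell}$. The distinguished branch --- obtained at each application of Proposition \ref{prop_Xn0} by selecting the summand in which the surviving elementary $X$-factor collapses to $1$ and the factor contributes the full $q$-atomic element $P[a_i] = P[p^{k_i}(q^{t_i+1}-1)/(q-1)]$ --- produces the Z-$P^s_t$ monomial $P[a_1] P[a_2] \cdots P[a_\ell]$. Under the bijection of Section \ref{sec_graded alg} this is exactly the Milnor element associated to $A$, supplying the unit diagonal entry. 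A non-distinguished summand applied to a single factor $X^{n_i}_{k_i}$ replaces it by $X^{(r-1)e+k_i}_{k_i}\, P[a']$ with $r \ge 1$; after normalizing into Z-$P^s_t$ form in $E^0(\A_q)$, the resulting Arnon preimage has its $i$-th segment of the flat exponent sequence ending in $P^{p^{k_i + re}}$ in place of $P^{p^{k_i}}$, which is a strict $\le_r$-decrease at the right end of that segment.

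The main obstacle will be globalizing this local $\le_r$-decrease to the full Arnon monomial. After Z-normalization, the two resulting pieces $X^{(r-1)e+k_i}_{k_i}$ and $X^{n_i}_{k_i+re}$ (the Arnon form of the new $q$-atomic $P[a']$) may be interleaved with the unchanged elementary factors $X^{n_j}_{k_j}$ of $A$, and additional simplifications can occur from the $E^0(\A_q)$ commutator rule $[P[a], P[b]] = P[a+b]$ when $a+b$ is $q$-atomic and not a power of $p$. I expect the cleanest resolution to be a double induction on the length $\ell$ of $A$ and on $|A|$, applying Proposition \ref{prop_Xn0} from right to left so that at each stage the block of already-expanded Z-$P^s_t$ factors on the right is in standard form and only the newly produced $P[a']$ needs to be commuted past it. The $\le_r$-decrease at the rightmost differing position of the resulting flat sequence then follows from the local decrease at the modified segment, combined with the inductive hypothesis applied to the shorter Arnon monomial lying to the left of the expansion point.
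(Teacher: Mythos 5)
Your reduction to $E^0(\A_q)$ is sound, and your treatment of the higher May filtration tail is correct and in fact more explicit than the paper's: since the May filtration of a Milnor basis element equals the flat length of the word of generators $P^{p^j}$ in its Arnon preimage, and a longer flat word is $<_r$ a shorter one by Definition \ref{def_lrorders}, those terms are harmless. Note, however, that you compute the transition matrix in the opposite direction to the paper: you expand Arnon monomials in the $P^s_t$ basis via Proposition \ref{prop_Xn0}, whereas the paper expands each $P^s_t$ in the Arnon basis, writing $P^s_t$ as the iterated commutator $[P[p^{s+(t-1)e}],[P[p^{s+(t-2)e}],[\ldots[[P[p^{s+e}],P[p^s]]\ldots]$, observing that the $\le_r$-maximal flat word in its expansion is $X^{s+(t-1)e}_s$, and then taking products; unitriangularity of the inverse matrix then follows formally. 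The paper's direction is the easier one, because every correction term is already a flat word of the same length that is $<_r$ the leading word, concatenation preserves this segment by segment, and one never has to renormalize a string of $P[\cdot]$'s.

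The genuine gap in your argument is exactly the step you flag as ``the main obstacle'', and the induction you sketch does not resolve it. After choosing an $r\ge 1$ branch at the $i$-th factor you must commute the new atom $P[a']$ into Z-position past $P[a_{i+1}]\cdots P[a_\ell]$, and the relations $[P[a],P[b]]=P[a+b]$ of Proposition \ref{prop_power_comm} produce terms in which $a'$ has merged with some $a_j$, $j>i$, into a single $q$-atomic number. The Arnon preimage of such a merged term has a different segment decomposition to the \emph{right} of segment $i$: the rightmost differing position of the flat word need no longer lie in segment $i$, so the ``local decrease at the right end of that segment'' does not by itself settle the global $\le_r$ comparison, and your appeal to the inductive hypothesis for the part of the monomial lying to the left of the expansion point does not reach these terms. (A further wrinkle: the leftover prefix ending in $X^{(r-1)e+k_i}_{k_i}$ need not be a legal Arnon monomial, so the induction must be set up over arbitrary products of elementary factors.) You would need a separate verification that every merged atom still yields a flat word $<_r A$ --- this is where the real combinatorial content of the proposition lives --- or else reverse the direction of the computation as the paper does.
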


\begin{proof}
By Proposition \ref{prop_YZ_Pst}, it suffices to show that the Y- and Z-Arnon A bases are triangular with respect to the corresponding $P^s_t$ bases.   Again it suffices to work in $E^0(\A_q)$.   We consider the expression of $P^s_t$ basis elements in the Arnon A basis.

We begin by showing that $P^s_t = X^{s+(t-1)e}_s +$ $\le_r$-lower terms in the Arnon A basis.   The general case follows by taking products.   For example, $Sq(0,1) = Sq^2Sq^1 + Sq^1 \cdot Sq^2$ and $Sq(0,2) = Sq^4Sq^2 + Sq^2 \cdot Sq^4$ in the Arnon A basis.   Hence $Sq(0,1)Sq(0,2) = Sq^2Sq^1\cdot Sq^4Sq^2 + Sq^2Sq^1 \cdot Sq^2  \cdot Sq^4 + Sq^1 \cdot Sq^2 \cdot Sq^4Sq^2 + Sq^1 \cdot Sq^2 \cdot Sq^2  \cdot Sq^4$.   The first term on the right is the Arnon A basis element corresponding to the $P^s_t$ basis element $Sq(0,1)Sq(0,2)$, and the other terms are $\le_r$-lower.

As an example, when $p=2$ we have $P^s_2 = Sq[3 \cdot 2^s] = [Sq[2^{s+1}], Sq[2^s]] = Sq^{2^{s+1}}Sq^{2^s} + Sq^{2^s} \cdot Sq^{2^{s+1}}$ in the Arnon A basis for $E^0(\A_2)$.   In $\A_2$ itself, there are terms of higher May filtration, for example when $s=2$, $Sq(0,4) = Sq^8Sq^4 + Sq^4 \cdot Sq^8 + Sq(3,3)$, and $M(Sq(3,3)) = 6$.   In the general case, we can express $P^s_t$ as the iterated commutator
$$
P^s_t = P[p^s(q^t-1)/(q-1)] = [P[p^{s+(t-1)e}], [P[p^{s+(t-2)e}], [ \ldots [[P[p^{s+e}], P[p^s]] \ldots ]
$$
of length $t$ in the generators $P^{p^j}$.   The $\le_r$-maximal term in the expansion of the iterated commutator is the Arnon A element $X^{s+t-1}_s$.
\end{proof}

School of Mathematics,

Alan Turing Building,

University of Manchester,

Manchester M13 9PL, UK.

\end{document}